\documentclass[11pt,reqno]{amsart}
\usepackage{amsthm,amssymb,amsfonts}
\usepackage[nosumlimits]{mathtools}
\usepackage{mathrsfs}
\usepackage{graphicx}
\usepackage{subcaption}
\usepackage{algorithm}
\usepackage{algpseudocode}
\usepackage{tikz-cd}
\usepackage{enumerate}
\usepackage{quiver}
\usepackage{stmaryrd}

\usepackage[margin=1in]{geometry}

\usepackage{hyperref,xcolor}
\hypersetup{
    colorlinks,
    linkcolor={red!50!black},
    citecolor={blue!50!black},
    urlcolor={blue!80!black}
}
\usepackage{adjustbox}

\usepackage[capitalise]{cleveref}

\theoremstyle{plain}
\newtheorem{theorem}{Theorem}[section]
\newtheorem{proposition}[theorem]{Proposition}
\newtheorem{lemma}[theorem]{Lemma}
\newtheorem{corollary}[theorem]{Corollary}
\newtheorem{remark}[theorem]{Remark}
\theoremstyle{definition}
\newtheorem{definition}[theorem]{Definition}

\newtheorem{conjecture}[theorem]{Conjecture}

\DeclareMathOperator{\im}{Im}

\DeclareMathOperator{\Ker}{Ker}
\DeclareMathOperator{\Lk}{L}

\DeclareMathOperator{\spa}{span}
\DeclareMathOperator{\Hom}{Hom}

\DeclareMathOperator{\height}{ht}

\DeclareMathOperator{\rank}{rank}

\DeclareMathOperator{\ex}{ex}
\DeclareMathOperator{\Seg}{Seg}
\DeclareMathOperator{\z}{z}

\newcommand{\C}[1]{{\protect\mathcal{#1}}}

\usepackage{todonotes}

\begin{document}
\title{Extremal constructions for apex partite hypergraphs}
\date{}
\author[Q-Y.~Chen]{Qiyuan~Chen}
\address{State Key Laboratory of Mathematical Sciences, Academy of Mathematics and Systems Science, Chinese Academy of Sciences, Beijing 100190, China}
\email{chenqiyuan@amss.ac.cn}
\author[H.~Liu]{Hong Liu}
\address{Extremal Combinatorics and Probability Group (ECOPRO), Institute for Basic Science (IBS), Daejeon, South Korea.}
\email{hongliu@ibs.re.kr}
\author[K.~Ye]{Ke Ye}
\address{State Key Laboratory of Mathematical Sciences, Academy of Mathematics and Systems Science, Chinese Academy of Sciences, Beijing 100190, China}
\email{keyk@amss.ac.cn}
\begin{abstract}
We establish new lower bounds for the Tur\'an and Zarankiewicz numbers of certain apex partite hypergraphs. Given a $(d-1)$-partite $(d-1)$-uniform hypergraph $\mathcal{H}$, let $\mathcal{H}(k)$ be the $d$-partite $d$-uniform hypergraph whose $d$th part has $k$ vertices that share $\C H$ as a common link. We show that $\ex(n,\mathcal{H}(k))=\Omega_{\C H}(n^{d-\frac{1}{e(\mathcal{H})}})$ if $k$ is at least exponentially large in $e(\mathcal{H})$. Our bound is optimal for all Sidorenko hypergraphs $\C H$ and verifies a conjecture of Lee for such hypergraphs.

In particular, for the complete $d$-partite $d$-uniform hypergraphs $\mathcal{K}^{(d)}_{s_1,\dots,s_d}$, our result implies that $\ex(n,\mathcal{K}^{(d)}_{s_{1},\cdots,s_{d}})=\Theta(n^{d-\frac{1}{s_{1}\cdots s_{d-1}}})$ if $s_{d}$ is at least exponentially large in terms of $s_{1}\cdots s_{d-1}$, improving the factorial condition of Pohoata and Zakharov and answering a question of Mubayi. Our method is a generalization of Bukh's random algebraic method [Duke Math.\!~J.\!~2024] to hypergraphs, and extends to the sided Zarankiewicz problem.
\end{abstract}
\maketitle

\section{Introduction}
Given a $d$-uniform hypergraph $\mathcal{H}$, the Tur\'an problem in Extremal Combinatorics studies the \emph{Tur\'an number} $\ex(n,\mathcal{H})$, which is the maximum number of edges in an $n$-vertex $d$-uniform hypergraph without containing $\C H$ as a subgraph. A classical line of work going back to Erd\H{o}s already highlights the role of complete multipartite configurations as $\C H$ has a degenerate Tur\'an number $O(n^{d-\Omega_H(1)})$ if and only if it is $d$-partite. 
Write $\mathcal{K}^{(d)}_{s_1,\dots,s_d}$ for the complete $d$-partite $d$-uniform hypergraph whose parts have sizes $s_1,\dots,s_d$. 
Erd\H{o}s showed (see \cite{Erdos64}) that $\ex(n,\mathcal{K}^{(d)}_{s_1,\dots,s_d}) = O_{d,s_1,\dots,s_{d-1}}(n^{\,d-\frac{1}{s_1\cdots s_{d-1}}})$.
This was conjectured to be sharp in the exponent.

\begin{conjecture}[\cite{Mubayi02}]\label{conj:Mubayi}
For any positive integers $s_1 \le \cdots \le s_d$,
\begin{equation}\label{eq:Mubayi}
\ex(n,\mathcal{K}^{(d)}_{s_1,\dots,s_d})
  = \Theta_{d,s_1,\dots,s_{d-1}}\!\left(n^{\,d-\frac{1}{s_1\cdots s_{d-1}}}\right).
\end{equation}
\end{conjecture}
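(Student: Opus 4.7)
The upper bound $\ex(n,\mathcal{K}^{(d)}_{s_1,\dots,s_d}) = O(n^{d-1/(s_1\cdots s_{d-1})})$ is the classical Erd\H{o}s bound stated just above the conjecture, so the plan is to supply a matching lower bound: for each $n$, construct a $d$-partite $d$-uniform $\mathcal{K}^{(d)}_{s_1,\dots,s_d}$-free hypergraph on $dn$ vertices with $\Omega(n^{d-1/(s_1\cdots s_{d-1})})$ edges. The natural tool is a random algebraic construction generalizing Bukh's graph-case method to the $d$-uniform setting.

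Concretely, set $a := s_1 s_2 \cdots s_{d-1}$ and fix a prime power $q$ with $q^a \approx n$. Take $V_1 = \cdots = V_d = \mathbb{F}_q^{\,a}$, so each $|V_i|=n$. Draw an $\mathbb{F}_q^{\,a}$-valued polynomial $f:(\mathbb{F}_q^{\,a})^d \to \mathbb{F}_q^{\,a}$ uniformly at random among those of bounded multidegree, and declare $\{x_1,\dots,x_d\}$ an edge of $H$ iff $f(x_1,\dots,x_d)=0$. A Lang--Weil estimate gives $\mathbb{E}\,|E(H)| \sim q^{ad}/q^{a} = n^{d-1/a}$, matching the target density.

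To rule out copies of $\mathcal{K}^{(d)}_{s_1,\dots,s_d}$, fix $S_i \subseteq V_i$ with $|S_i|=s_i$ for $i\le d-1$. The set $Z(S_1,\dots,S_{d-1}) \subseteq V_d$ of common extensions is cut out by $s_1 \cdots s_{d-1}\cdot a = a^2$ polynomial equations in the $a$ coordinates of $y\in V_d$, hence generically $0$-dimensional. The heart of Bukh's framework is a genericity lemma which, with high probability over $f$, forces every such $Z$ either to have size at most a constant $C = C(s_1,\dots,s_{d-1},\deg f)$, or to contain a distinguished positive-dimensional component that can be excised by a union bound and a bounded-size deletion. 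If $s_d > C$, this is enough: union-bound over choices of $S_1,\dots,S_{d-1}$ and delete one edge per surviving $\mathcal{K}^{(d)}_{s_1,\dots,s_d}$.

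The main obstacle is the size of $C$. Direct application of B\'ezout and Lang--Weil to a generic complete intersection of $a^2$ hypersurfaces yields only $C$ exponential in $a$, which is precisely why prior work of Pohoata--Zakharov and this paper's main theorem require $s_d$ to grow at least exponentially in $s_1\cdots s_{d-1}$. Reaching the full conjectured range $s_{d-1}\le s_d$ requires shrinking $C$ to something polynomial in, or comparable to, $s_{d-1}$, and I do not see a way to do this inside a single algebraic construction: the random polynomial setup essentially forces $Z$ to look like a complete intersection whose degree is already $\exp(a)$. The plausible route is therefore a two-layer construction, using the algebraic hypergraph as an inner gadget to fix the correct exponent and an outer product or blow-up that absorbs the small-$s_d$ regime combinatorially, together with a careful link-based forbidden-configuration analysis to ensure the outer step does not create unintended copies of $\mathcal{K}^{(d)}_{s_1,\dots,s_d}$. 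Threading these two layers without losing the exact exponent $d-1/(s_1\cdots s_{d-1})$ is what I anticipate to be the decisive difficulty.
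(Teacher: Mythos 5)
The statement you were asked to prove is a \emph{conjecture} in this paper, and the paper does not prove it; it only establishes the lower bound when $s_d$ is exponentially large in $s_1\cdots s_{d-1}$ (Theorem~1.3, and the first bullet of Corollary~1.4 with threshold $s_d>9^{(1+o(1))s_1\cdots s_{d-1}}$). Your honest assessment is therefore exactly right: the random algebraic method, as currently understood, cannot reach the full range $s_d\ge s_{d-1}$, because the common neighbourhood $Z$ of the first $d-1$ parts is a complete-intersection-like set whose degree (hence the threshold for $s_d$) is inherently exponential in $s_1\cdots s_{d-1}$. What you sketch as the attainable partial result is precisely what the paper proves, so there is no gap between your diagnosis and the state of the art; the conjecture remains open.

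Two corrections to your sketch of the partial result. First, your polynomial $f$ must be $\mathbb{F}_q$-valued, not $\mathbb{F}_q^{\,a}$-valued: with $n=q^a$ and an $\mathbb{F}_q^{\,a}$-valued $f$ the expected edge count is $q^{ad}/q^{a}=n^{d-1}$, not $n^{d-1/a}$; a single scalar equation gives $q^{ad-1}=n^{d-1/a}$, and then $Z(S_1,\dots,S_{d-1})$ is cut out by $a=s_1\cdots s_{d-1}$ scalar equations in an $a$-dimensional ambient space, which is the correct count for generic $0$-dimensionality (your ``$a^2$ equations'' would make $Z$ generically empty). Second, the paper's ``genericity lemma'' is not Bukh's probabilistic moment estimate: it instead bounds the \emph{degree} and \emph{dimension} of the variety $\mathcal{U}_X(m_1,\dots,m_s)$ of non-regular sequences (Propositions~3.2 and~3.3, via an effective analysis of the Koszul complex), and then counts $\mathbb{F}_q$-points of that variety. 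This algebro-geometric reformulation is what allows the argument to work over subvarieties $X\subseteq\mathbb{P}^N$ rather than the whole space, which is essential for the hypergraph (and more generally the $\mathcal{H}(k)$) setting; a direct port of Bukh's probabilistic genericity argument would not suffice here.
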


There has been some progress toward Conjecture~\ref{conj:Mubayi} in regimes where the last part is very large. The best result to date is by Pohoata and Zakharov~\cite{pohoata2021norm}; improving on~\cite{MYZ18} they showed that \eqref{eq:Mubayi} holds for factorially large $s_d$, namely $s_d > ((d-1)(s_1\cdots s_{d-1}-1))!$.
When $d=2$, a recent breakthrough by Bukh~\cite{bukh2024extremal} established an exponential bound for $s_{2}$ in terms of $s_{1}$.

\medskip
\noindent\textbf{Zarankiewicz variant.}
It is often useful in some applications (see e.g.\cite{alon2012bruijn,walsh2020polynomial}) to distinguish the parts of a $d$-partite $d$-uniform hypergraph and forbid copies of $\mathcal{K}^{(d)}_{s_1,\dots,s_d}$ in a \emph{sided} sense. 
The \emph{Zarankiewicz number} $\z(n_1,\dots,n_d,\mathcal{K}^{(d)}_{s_1,\dots,s_d})$ is the maximum number of edges in a $d$-partite $d$-uniform hypergraph $\C G$ with parts of sizes $n_1,\dots,n_d$ containing no copy of $\mathcal{K}^{(d)}_{s_1,\dots,s_d}$ such that the set of size $s_i$ in $\mathcal{K}^{(d)}_{s_1,\dots,s_d}$ is embedded in the part of $\C G$ of size $n_i$ for each $i\in [d]$.
When $n_1=\cdots=n_d=m$, we abbreviate $\z(m,\dots,m,\mathcal{K}^{(d)}_{s_1,\dots,s_d})$ to $\z(m,\mathcal{K}^{(d)}_{s_1,\dots,s_d})$.
Since the sided problem is more permissive, we trivially have
$\ex(dm,\mathcal{K}^{(d)}_{s_1,\dots,s_d}) \le \z(m,\mathcal{K}^{(d)}_{s_1,\dots,s_d})$.
Recently, Mubayi \cite{Dhruv25} improved the factorial bound on $s_d$ to exponential at the expense of a $o(1)$ error in the exponent; he showed that 
\begin{equation}\label{eq:Zbound}
\z(m,\mathcal{K}^{(d)}_{s_1,\dots,s_d})
  = m^{\,d - \frac{1}{s_1\cdots s_{d-1}} - o(1)}, \quad \text{ if $s_d > 3^{(1+o(1))\,s_1\cdots s_{d-1}}$}.
\end{equation}
However, his method applies only to the sided Zarankiewicz problem, and he asked whether a similar bound can be achieved in the Tur\'an setting.

\medskip
\noindent\textbf{Connection to Sidorenko exponents.}
For $d$-uniform hypergraphs $\mathcal{H},\mathcal{G}$, denote by $\Hom(\mathcal{H},\mathcal{G})$ the set of homomorphisms from $\mathcal{H}$ to $\mathcal{G}$. The \emph{homomorphism density} of $\C H$ in $\C G$ is defined as
\[
t_{\mathcal{H}}(\mathcal{G}) \coloneqq \frac{|\Hom(\mathcal{H},\mathcal{G})|}{|V(\mathcal{G})|^{|V(\mathcal{H})|}},
\]
Let $\mathcal{K}^{(d)}_d$ denote the complete $d$-uniform hypergraph on $d$ vertices. 
The \emph{Sidorenko exponent} of a $d$-partite $d$-uniform hypergraph $\mathcal{H}$ is
\begin{equation*}
s(\mathcal{H}) \coloneqq \sup\Bigl\{ s\ge 0 : \text{$t_{\mathcal{H}}(\mathcal{G}) = t_{\mathcal{K}^{(d)}_d}(\mathcal{G})^s > 0$ for some $\mathcal{G}$}\Bigr\}.
\end{equation*}

Sidorenko's conjecture, a central conjecture in Extremal Combinatorics, states that for every bipartite graph $H$, its Sidorenko exponent satisfies $s(H)=e(H)$. This conjecture remains open. It is known that (see e.g. \cite{conlon2024extremal,nie2023sidorenko}) Sidorenko's conjecture is not true for hypergraphs. We call a hypergraph $\C H$ \emph{Sidorenko} if $s(\C H)=e(\C H)$. 

Very recently, Lee~\cite{lee2025sidorenko} discovered a connection between Sidorenko exponent and Tur\'an problem. In particular, he used $s(\C H)$ to give an upper bound on the Tur\'an number for certain `apex' partite hypergraphs. Given a $(d-1)$-partite $(d-1)$-uniform hypergraph $\mathcal{H}$ and $k \in \mathbb{N}$, let $\mathcal{H}(k)$ be the $d$-partite $d$-uniform hypergraph whose $d$th part has $k$ vertices that have $\C H$ as a common link graph. Lee~\cite{lee2025sidorenko} proved that $\ex(n,\mathcal{H}(k))
  = O_{\C H,k}\!\left(n^{\,d-\frac{1}{s(\mathcal{H})}}\right)$. He further conjectured that this bound is best possible.

\begin{conjecture}[\cite{lee2025sidorenko}]\label{Conj-1}
Let $d\ge 2$ and $\C H$ be a $(d-1)$-partite $(d-1)$-uniform hypergraph. There exists a constant $C=C(\C H)$ such that for all $k\ge C$,
\[
\ex(n,\mathcal{H}(k))
  = \Omega_{\C H}\!\left(n^{\,d-\frac{1}{s(\mathcal{H})}}\right).
\]
\end{conjecture}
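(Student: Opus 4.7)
The plan is to generalize Bukh's random algebraic method from the bipartite $K_{s,t}$ setting to $d$-partite $d$-uniform hypergraphs, thereby proving the stronger lower bound $\ex(n,\mathcal{H}(k)) = \Omega_{\mathcal{H}}(n^{d - 1/e(\mathcal H)})$, which matches the conjectured exponent precisely when $\mathcal{H}$ is Sidorenko (so $s(\mathcal H) = e(\mathcal H)$). Set $e = e(\mathcal H)$, pick a prime power $q$ with $n \asymp q^{e}$, and take $V_1,\dots,V_d$ to be disjoint copies of $\mathbb{F}_q^{e}$. Choose a random polynomial $f(x_1,\dots,x_d)$ of bounded degree $D = D(\mathcal H)$, where each $x_i$ ranges over $\mathbb{F}_q^{e}$, with coefficients i.i.d.\ uniform in $\mathbb{F}_q$. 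Let $G_f$ be the random $d$-partite $d$-uniform hypergraph on $V_1 \sqcup\cdots\sqcup V_d$ with edge set $\{(v_1,\dots,v_d) : f(v_1,\dots,v_d) = 0\}$. A standard Lang-Weil / second-moment estimate shows $|E(G_f)| = \Theta(q^{de-1}) = \Theta(n^{d - 1/e})$ with positive probability.

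A copy of $\mathcal H(k)$ in $G_f$ is encoded by an embedding $\phi\colon V(\mathcal H) \hookrightarrow V_1 \sqcup\cdots\sqcup V_{d-1}$ respecting the partition, together with $k$ apex vertices $y_1,\dots,y_k \in V_d$ each satisfying $f(\phi(\alpha), y_j) = 0$ for every edge $\alpha \in E(\mathcal H)$. For fixed $\phi$, the set of admissible apex vertices is the variety
\[
V_\phi = \bigl\{y \in \mathbb{F}_q^{e} : f(\phi(\alpha),y) = 0 \text{ for all } \alpha \in E(\mathcal H)\bigr\} \subseteq \mathbb{F}_q^{e},
\]
which is the intersection of exactly $e = e(\mathcal H)$ hypersurfaces in $e$-dimensional space. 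Its virtual dimension is $0$, so generically Bezout gives $|V_\phi| \leq D^{e}$; if this held for every $\phi$, taking $k > D^{e}$ would immediately make $G_f$ be $\mathcal H(k)$-free.

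The main obstacle — the step I expect to be hardest — is handling the exceptional $\phi$ for which $V_\phi$ is positive-dimensional, since each such $\phi$ can carry many apex vertices and violate the naive Bezout bound. Following Bukh's strategy, I would work on the joint parameter space of embeddings and polynomial coefficients, and perform a careful Lang-Weil style dimension count to show that the expected number of edges of $G_f$ lying in a bad configuration (one whose $V_\phi$ exceeds the generic Bezout bound) is $o(q^{de-1})$. Combined with the edge estimate, Markov's inequality yields an $f$ at which both quantities attain their typical values; a deletion step that removes one vertex per bad configuration then destroys every copy of $\mathcal H(k)$ once $k > D^{e}$, at a cost of only a $o(1)$ fraction of edges. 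The remaining hypergraph is $\mathcal H(k)$-free and has $\Omega(n^{d - 1/e})$ edges, with $k$ depending only on $\mathcal H$; since the degree $D$ must grow with $e(\mathcal H)$ to put the random polynomial in sufficiently general position, $k$ ends up exponentially large in $e(\mathcal H)$, matching the threshold announced in the abstract.
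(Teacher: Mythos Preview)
First, this statement is Lee's conjecture; the paper does not prove it in general but only establishes Theorem~\ref{thm:Turan} (exponent $d-1/e(\mathcal{H})$), which matches the conjecture precisely for Sidorenko~$\mathcal{H}$. Your sketch targets exactly that partial result, so the scope is the same.

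Your route, however, differs substantially from the paper's. You propose a random polynomial of large degree~$D$ on the minimal space $\mathbb{F}_q^{e}$, followed by a deletion step to kill embeddings~$\phi$ whose apex locus $V_\phi$ is positive-dimensional; this is the older Bukh--Conlon paradigm. The paper instead generalizes Bukh's 2024 construction: it works in $\mathbb{P}^N$ with $N \approx 2e(\mathcal{H})$, restricts each vertex class to a carefully chosen subvariety $V_k$ (cut first by polynomials $f_{k,\ast}$ to enforce $s$-wise $3$-independence, then by auxiliary polynomials $h_{k,\ast}$ of tuned degrees), and uses a degree-$3$ tensor polynomial~$g$. The key input is Propositions~\ref{lem-9} and~\ref{lem-10}, bounding the degree and codimension of the variety of non-regular sequences; a counting argument (Corollaries~\ref{cor-1} and~\ref{lem-12}) then produces $g$ and the $h_{k,\ast}$ so that \emph{every} $V_\phi$ is zero-dimensional---no deletion needed. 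What your approach buys is conceptual simplicity; what it costs is the threshold on~$k$: since the codimension of non-regular degree-$D$ sequences of $e$ polynomials in $e$ variables is only $D+1$, pushing a union bound over all $q^{\,e\sum s_i}$ embeddings forces $D \gtrsim e\,|V(\mathcal{H})|$ and hence $k > D^{e} \approx (e\,|V(\mathcal{H})|)^{e(\mathcal{H})}$, whereas the paper's fixed-degree-$3$ construction achieves $k > 9^{(1+o(1))e(\mathcal{H})}$. The ``hard step'' you flag is exactly what Propositions~\ref{lem-9}--\ref{lem-10} address, but the paper deploys them inside the richer construction rather than via Markov and deletion.
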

As an interesting test case, Lee asked whether $\ex(n,C_6(k)) = \Omega\!\left(n^{\frac{17}{6}}\right)$ for large $k$.

\subsection{Main results}
Our first result provides lower bounds for all `apex' partite hypergraphs $\mathcal{H}(k)$.

\begin{theorem}\label{thm:Turan}
Let $d \ge 2$ and $\C H$ be a $(d-1)$-partite $(d-1)$-uniform hypergraph. There exists a constant $c$ such that
\[
\ex(n,\mathcal{H}(k)) = \Omega_{\C H}\!\left(n^{\,d-\frac{1}{e(\mathcal{H})}}\right) \qquad \text{if}\quad  k>c^{e(\C H)}.
\]
\end{theorem}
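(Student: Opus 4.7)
The plan is to adapt Bukh's random algebraic construction \cite{bukh2024extremal} from bipartite graphs to the $d$-uniform setting. Let $m = e(\mathcal H)$, let $q$ be a large prime power, and set $V_1 = \cdots = V_d = \mathbb F_q^{m}$, so each part has size $n := q^m$. Pick a constant $D = D(\mathcal H)$, to be taken large, and sample a polynomial $f(x_1,\dots,x_d) \in \mathbb F_q[x_1,\dots,x_d]$ uniformly from polynomials of total degree at most $D$ in the $dm$ scalar coordinates of $(x_1,\dots,x_d)$. Let $\mathcal G$ be the $d$-partite $d$-uniform hypergraph on $V_1 \sqcup \cdots \sqcup V_d$ whose hyperedges are the zeros of $f$ on $V_1 \times \cdots \times V_d$. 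Since each evaluation of $f$ is uniform in $\mathbb F_q$, the expected number of hyperedges is $q^{dm-1} = n^{d-1/m}$; a standard second moment / Lang--Weil computation, as in Bukh, gives the same order bound with positive probability.

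Next I would rule out $\mathcal H(k)$-copies rooted at a ``generic'' base. Fix any partite-respecting embedding of $\mathcal H$ into $V_1 \cup \cdots \cup V_{d-1}$, and for each $e \in E(\mathcal H)$ let $v_e$ denote the tuple of $d-1$ images of the vertices of $e$. An apex vertex $w \in V_d = \mathbb F_q^{m}$ extends this embedding to a common neighbor iff $w$ is a common zero of the $m$ polynomials $\{\,f(v_e,w)\,\}_{e \in E(\mathcal H)}$, each of total degree at most $D$ in the $m$ coordinates of $w$. By B\'ezout, if this system is zero-dimensional then it has at most $D^m$ solutions over $\overline{\mathbb F}_q$, hence at most $D^m$ over $\mathbb F_q$, so the common link has size at most $D^m$. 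Thus, setting $k > D^m =: c^m$, no copy of $\mathcal H(k)$ can be built on such a generic base.

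The main obstacle is the ``degenerate'' bases for which the system above cuts out a positive-dimensional variety, since there B\'ezout breaks down and the common link can in principle contain $\Omega(q)$ vertices. Following Bukh's template, I would bound the number of such bases by combining: (i) a Lang--Weil / Noether normalization argument showing that for random $f$, the occurrence of a positive-dimensional fiber forces the base tuple to satisfy an extra polynomial constraint coming from a resolvent of the system, cutting out a proper subvariety in the parameter space of bases; and (ii) a first-moment argument exploiting the symmetry of the uniform distribution on degree-$\le D$ polynomials to show that bases satisfying such constraints contribute $o(q^{dm-1})$ hyperedges. Deleting one hyperedge from each surviving copy of $\mathcal H(k)$ — both the ones rooted at degenerate bases and those arising from the $d!$ partite re-orientations of $\mathcal H(k)$ inside $\mathcal G$ — yields a $d$-partite $\mathcal H(k)$-free hypergraph with $\Omega(n^{d-1/m})$ edges on $dn$ vertices, giving the desired Tur\'an bound. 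The technical heart is step (i): the combinatorial structure of a general $\mathcal H$ interacts with the resolvents in a way more delicate than in the $K_{s,t}$ case treated by Bukh, and carrying out the codimension bookkeeping uniformly in $\mathcal H$ — so that neither the constant $c$ nor the hidden loss depends on the isomorphism type beyond the count $m = e(\mathcal H)$ — is where I expect the genuine work to lie.
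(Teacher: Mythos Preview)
Your high-level template is right, but the proposal has a genuine gap at exactly the point you flag as ``where the genuine work lies,'' and the paper's resolution is \emph{not} a deletion argument.

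First, the deletion step as written cannot succeed. With $V_i=\mathbb{F}_q^{m}$ and $m=e(\mathcal H)$, the number of potential bases is $q^{m\sum_{i=1}^{d-1}s_i}$, which for any nontrivial $\mathcal H$ dwarfs the edge count $q^{dm-1}$. A single degenerate base already has link size $\Omega(q)$, producing $\binom{\Omega(q)}{k}$ copies of $\mathcal H(k)$, so ``delete one hyperedge per surviving copy'' is hopeless. Even the smarter strategy of trimming each bad link down to size $<k$ requires bounding $\sum_{\text{bad bases}}|\text{link}|$; the codimension of the non-regular locus for $m$ degree-$D$ forms in $m$ variables is only $D+1$ (this is exactly Proposition~\ref{lem-10} with $n=s$), so the expected number of bad bases is $\gtrsim q^{m\sum s_i-(D+1)}$, forcing $D\gtrsim m\sum s_i$. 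That yields $k>D^{m}\approx (m\sum s_i)^{m}$, which is super-exponential in $e(\mathcal H)$ and misses the $c^{e(\mathcal H)}$ threshold.

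The paper avoids deletion entirely. It works in a projective space of dimension $N\approx 2e(\mathcal H)$ (not $e(\mathcal H)$), and each part $V_k$ is cut down to dimension $e(\mathcal H)$ by a regular sequence $f_{k,1},\dots,f_{k,r}$ together with extra polynomials $h_{k,1},\dots,h_{k,t}$ of carefully graded degrees $m_j=D(t-j+1,l)$. The point of the extra $h$'s and the larger ambient space is to boost the codimension of the degenerate locus from $D+1$ to $\binom{t+1+m}{m}$ with $m=3$ fixed; choosing $t\sim (e(\mathcal H)\max s_i)^{1/3}$ makes this exceed $(\sum s_i)(e(\mathcal H)+t)$, which is what is needed to conclude (via Propositions~\ref{lem-9} and \ref{lem-10} and a union bound over bases) that a \emph{specific} choice of $(g,f_{k,\bullet},h_{k,\bullet})$ exists for which \emph{every} base gives a $0$-dimensional intersection (Corollary~\ref{lem-12}). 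B\'ezout then caps every link at $3^{r+e(\mathcal H)}\prod m_j=9^{(1+o(1))e(\mathcal H)}$, and no cleanup is needed. The ``resolvent/Noether normalization'' picture you sketch is replaced by an explicit dimension count for the variety of non-regular sequences over an arbitrary complete-intersection base $X$ --- that is the new algebraic input.
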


The bound in~\cref{thm:Turan} is optimal for all Sidorenko hypergraphs, thereby confirming~\cref{Conj-1} for a wide class of hypergraphs. In particular, some well-known Sidorenko (hyper)graphs includes complete partite hypergraphs\footnote{Note that $\mathcal{K}^{(d)}_{s_1,\dots,s_d}=\mathcal{K}^{(d-1)}_{s_1,\dots,s_{d-1}}(s_d)$.}, even cycles, hypercubes and some symmetric graphs arising from finite reflection groups (see e.g.~\cite{conlon2017finite}). 

\begin{corollary}\label{cor:Sidorenko}
 Let $d,\ell,r \ge 2$ and $s_1,\dots,s_{d-1}\in\mathbb{N}$.
\begin{itemize}
    \item  For the complete partite hypergraphs, if $s_d > 9^{(1+o(1))s_1\cdots s_{d-1}}$, then
\[
\ex(n,\mathcal{K}^{(d)}_{s_1,\dots,s_d})
  = \Theta_{s_1,\dots,s_{d-1}}\!\left(n^{\,d-\frac{1}{s_1\cdots s_{d-1}}}\right).
\]

\item For the even cycle $C_{2\ell}$, if $k> 9^{(1+o(1))2\ell}$, then
\[
\ex(n,C_{2\ell}(k)) = \Theta\!\left(n^{\,3-\frac{1}{2\ell}}\right).
\]

\item For the hypercube $Q_r$, if $k>9^{(1+o(1))r2^{r-1}}$, then
   \[
     \ex(n,Q_{r}(k)) = \Theta\!\left(n^{\,3-\frac{1}{r2^{r-1}}}\right).
   \]
\end{itemize}
\end{corollary}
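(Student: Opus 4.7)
The plan is to deduce all three items in a unified way by combining the lower bound from Theorem~\ref{thm:Turan} with the upper bound
\[
\ex(n,\C H(k))=O_{\C H,k}\!\left(n^{d-1/s(\C H)}\right)
\]
of Lee~\cite{lee2025sidorenko}, and then invoking the Sidorenko identity $s(\C H)=e(\C H)$ for the underlying $(d-1)$-uniform link hypergraph $\C H$. The three items correspond respectively to the specializations $\C H=\C K^{(d-1)}_{s_1,\dots,s_{d-1}}$ with $e(\C H)=s_1\cdots s_{d-1}$ (observing that $\C H(s_d)=\C K^{(d)}_{s_1,\dots,s_d}$), $\C H=C_{2\ell}$ viewed as a bipartite $2$-uniform hypergraph with $e(\C H)=2\ell$, and $\C H=Q_r$ with $e(\C H)=r\cdot 2^{r-1}$.

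For each such $\C H$, Theorem~\ref{thm:Turan} immediately supplies the lower bound $\Omega(n^{d-1/e(\C H)})$ provided $k$ exceeds the threshold $c^{e(\C H)}$ associated to that theorem's constant $c$. The matching upper bounds come from Lee's result once we verify that $\C H$ is Sidorenko: complete partite hypergraphs satisfy this by iterated H\"older's inequality, even cycles $C_{2\ell}$ are a classical instance of Sidorenko's conjecture for bipartite graphs, and hypercubes are Sidorenko by Hatami's theorem (see e.g.\ \cite{conlon2017finite,conlon2024extremal}). In each case $s(\C H)=e(\C H)$, so Lee's exponent collapses to $d-1/e(\C H)$, matching our lower bound and yielding the desired $\Theta$-characterisation.

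The only piece of the argument that is not pure citation is upgrading the qualitative threshold $k>c^{e(\C H)}$ in Theorem~\ref{thm:Turan} to the explicit threshold $k>9^{(1+o(1))e(\C H)}$ stated in the corollary. For this I would revisit the random algebraic construction used in the proof of Theorem~\ref{thm:Turan} and perform the elementary bookkeeping needed to show that one may take $c\le 9(1+o(1))$ as $e(\C H)\to\infty$. This constant-tracking is the main, and indeed the only, nontrivial step of the deduction; it requires no new combinatorial or algebraic idea beyond a careful quantitative audit of each step of the construction.
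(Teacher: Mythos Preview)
Your proposal is correct and matches the paper's approach: the corollary is not given a separate proof in the paper but is presented as immediate from Theorem~\ref{thm:Turan} (whose proof already yields the explicit threshold $s_d > \bigl[(\tfrac{3(d-1)}{\beta^2})^{1+\log t} t^{3(1+\log t)} 3^{t+3} t!\bigr]\cdot 9^{e(\mathcal{H})} = 9^{(1+o(1))e(\mathcal{H})}$, since $t\asymp e(\mathcal{H})^{1/3}$), combined with Lee's upper bound $O(n^{d-1/s(\mathcal{H})})$ and the Sidorenko property $s(\mathcal{H})=e(\mathcal{H})$ for complete partite hypergraphs, even cycles, and hypercubes. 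The ``constant-tracking'' you flag as the one nontrivial step is thus already carried out inside the proof of Theorem~\ref{thm:Turan} rather than requiring a separate audit.
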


For the complete partite hypergraphs $\mathcal{K}^{(d)}_{s_1,\dots,s_d}$, our result improves the previously best known factorial condition on $s_d$ by Pohoata and Zakharov~\cite{pohoata2021norm} to an exponential one, which answers positively the question posed by Mubayi~\cite{Dhruv25} in a strong sense without error term in the exponent in~\eqref{eq:Zbound}. Moreover, for $d=2$, our bound recovers the one by Bukh~\cite{bukh2024extremal}: $s_2 > 9^{s_1} s_1^{4 s_1^{2/3}}$.

\medskip

Our second result generalizes Bukh's result~\cite{bukh2024extremal} on Zarankiewicz problem to hypergraphs. 

\begin{theorem}\label{thm:Zarankiewicz}
Let $s_1,\dots,s_d,n_1,\dots,n_d\in\mathbb{N}$ and let $\mathcal{H}$ be a $(d-1)$-partite $(d-1)$-uniform hypergraph whose $(d-1)$ parts have $s_1,\dots,s_{d-1}$ vertices, respectively. There exists $C=C(s_{1},\cdots,s_{d-1})$ such that if $s_d \;>\; C\left(\log_{n_d}\!\bigl(n_1^{s_1}\cdots n_{d-1}^{s_{d-1}}\bigr)\right)^{2\sqrt{e(\mathcal{H})}+1}$,
then
\[
\z(n_1,\dots,n_d,\mathcal{H}(s_d))
  = \Omega_{s_1,\dots,s_{d-1},s_d}\!\left(n_1\cdots n_{d-1}\, n_d^{\,1-\frac{1}{e(\mathcal{H})}}\right).
\]
In particular, when $\mathcal{H}=\mathcal{K}^{(d-1)}_{s_1,\dots,s_{d-1}}$ and
$s_d \;>\; C\left(\log_{n_d}\!\bigl(n_1^{s_1}\cdots n_{d-1}^{s_{d-1}}\bigr)\right)^{2\sqrt{s_1\cdots s_{d-1}}+1},
$
we have
\[
\z(n_1,\dots,n_d,\mathcal{K}^{(d)}_{s_1,\dots,s_d})
  = \Omega_{s_1,\dots,s_{d-1},s_d}\!\left(n_1\cdots n_{d-1}\, n_d^{\,1-\frac{1}{s_1\cdots s_{d-1}}}\right).
\]
\end{theorem}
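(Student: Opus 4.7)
The approach extends Bukh's random algebraic construction to the $d$-partite hypergraph setting. Write $r := e(\mathcal{H})$, pick a prime power $q \asymp n_d^{1/r}$, set $V_d := \mathbb{F}_q^r$ (so $|V_d|\asymp n_d$), and for each $i<d$ realise $V_i$ as an $n_i$-subset of $\mathbb{F}_q^{a_i}$ with $a_i := \lceil \log_q n_i \rceil$. Sample $f \in \mathbb{F}_q[x^{(1)},\dots,x^{(d-1)},y]$ uniformly among polynomials of degree at most $D$ in each vector block, for a degree parameter $D=D(s_1,\dots,s_{d-1})$ to be tuned. Declare $(v_1,\dots,v_{d-1},w)$ to be an edge of the random $d$-uniform hypergraph $G$ iff $f(v_1,\dots,v_{d-1},w)=0$. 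Since $f$ evaluated at any fixed point is uniform in $\mathbb{F}_q$, linearity of expectation gives $\mathbb{E}\,e(G) = \Theta(n_1\cdots n_{d-1}\,n_d/q) = \Theta(n_1\cdots n_{d-1}\,n_d^{\,1-1/r})$, which is the target density.

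Now fix a prospective copy of $\mathcal{H}$ sitting in $V_1\times\cdots\times V_{d-1}$, specified by its edges $e_1,\dots,e_r$. The set of apex vertices $w\in V_d$ that complete it to a copy of $\mathcal{H}(1)$ is
\[
W_f(e_1,\dots,e_r) \;=\; \bigl\{w \in \mathbb{F}_q^r : f(e_j,w)=0 \text{ for all } j\in[r]\bigr\},
\]
cut out by $r$ hypersurfaces in $\mathbb{F}_q^r$, so generically a $0$-dimensional complete intersection of size $\le D^r$ by Bezout. The heart of the argument is a random-algebraic tail estimate: for a suitable threshold $C_0=C_0(D,r)$, the event $\{|W_f(e_1,\dots,e_r)| > C_0\}$ for a fixed $(e_1,\dots,e_r)$ is captured by the $f$'s lying on a parameter subvariety of controlled codimension in the space of polynomials, and a Lang--Weil count turns this into a probability bound of the form $q^{-\Omega(D\sqrt r)}$. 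Union-bounding over all at most $n_1^{s_1}\cdots n_{d-1}^{s_{d-1}}$ prospective $\mathcal{H}$-copies and tuning $D \asymp \sqrt{r}\,\log_{n_d}(n_1^{s_1}\cdots n_{d-1}^{s_{d-1}})$ to balance the two sides produces, after arithmetic, the numerical condition $s_d > C(\log_{n_d}(n_1^{s_1}\cdots n_{d-1}^{s_{d-1}}))^{2\sqrt r+1}$ in the hypothesis, under which the expected number of ``bad'' copies (those with $|W_f|\ge s_d$) is $o(\mathbb{E}\,e(G))$.

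To finish, Markov's inequality supplies a deterministic $f$ with $e(G) = \Omega(n_1\cdots n_{d-1}\,n_d^{\,1-1/r})$ and only $o(e(G))$ edges belonging to bad copies. Deleting one edge from each bad copy yields an $\mathcal{H}(s_d)$-free subhypergraph of the required density; specialisation to $\mathcal{H} = \mathcal{K}^{(d-1)}_{s_1,\dots,s_{d-1}}$ is immediate from $e(\mathcal{H})=s_1\cdots s_{d-1}$. The main obstacle is the codimension/tail lemma in the previous step: Bukh's $d=2$ argument relies on a delicate stratification of the moduli of $f$ by how the $s_1$ constraints degenerate, combined with sharp Lang--Weil bounds. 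Here $f$ has $d-1$ asymmetric source blocks of variables and $\mathcal{H}$ may impose an arbitrary combinatorial pattern on how $e_1,\dots,e_r$ share vertices, so transversality of the $r$ constraints cutting out $W_f$ must be established uniformly over all such combinatorial types. Promoting Bukh's codimension lemma to this multilinear, multi-partite setting — while preserving the numerical balance that gives the exponent $2\sqrt r+1$ — is where the genuine technical work lies.
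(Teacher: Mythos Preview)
Your high-level strategy—algebraic construction over $\mathbb{F}_q$ with $q \asymp n_d^{1/e(\mathcal{H})}$, Bezout for the generic apex count, Lang--Weil for $\mathbb{F}_q$-points—is indeed what the paper uses. But the specific construction you describe cannot deliver the exponent $2\sqrt{e(\mathcal{H})}+1$, and your asserted tail bound $q^{-\Omega(D\sqrt{r})}$ does not follow from the setup. Writing $S=e(\mathcal{H})$: with $V_d = \mathbb{F}_q^{S}$ and a single degree parameter $D$, the $S$ specialised polynomials $f(e_j,\cdot)$ are $S$ degree-$D$ forms in $S$ variables, and Proposition~\ref{lem-10} (applied with $n=s=S$) gives the non-regular locus codimension only $\binom{1+D}{D}=D+1$, so the failure probability is $q^{-O(D)}$, not $q^{-\Omega(D\sqrt{S})}$. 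Balancing against the union bound over $\prod_i n_i^{s_i} \approx q^{St}$ placements then forces $D \gtrsim St$, whence the Bezout threshold is $D^{S} \gtrsim t^{S}$: your scheme yields only $s_d > C\,t^{S}$, not $s_d > C\,t^{2\sqrt{S}+1}$. The obstacle you flag at the end—transversality across combinatorial types of $\mathcal{H}$—is not the actual bottleneck.

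What the paper does (Lemma~\ref{lem-14} and the proof of Theorem~\ref{thm:Zarankiewicz}) involves two structural ingredients absent from your proposal. First, the apex part is placed in $\mathbb{P}^N$ with $N = S + \rho$, $\rho = \lceil\sqrt{S}\rceil$; the extra $\rho$ dimensions raise the codimension of the non-regular locus for the $S$ edge-equations to $\binom{\rho+1+m}{m} \asymp m^{\rho+1}$, which already beats the union bound for $m \asymp t^{1/(\rho+1)}$. Second, since now $q^N \gg n_d$, the apex part is cut back down by $\rho$ auxiliary hypersurfaces $h_1,\dots,h_\rho$ of \emph{separately tuned} degrees $m_j = D(\rho-j+1,\,2St+1)$; a second application of Propositions~\ref{lem-9}--\ref{lem-10} shows one can choose the $h_j$ so that $Z_{E,g,y}\cap V(h_1,\dots,h_\rho)$ is zero-dimensional for \emph{every} $y$, of degree at most $m^{S}\prod_j m_j \lesssim t^{\sqrt{S}}\cdot t^{\rho} \le C\,t^{2\sqrt{S}+1}$. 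This two-layer construction—edge polynomial $g$ of one degree, plus $\rho\approx\sqrt{S}$ cutting polynomials of staggered degrees—is already Bukh's key device for $d=2$ and is the source of the exponent $2\sqrt{S}+1$. A secondary difference: the paper does not use Markov-plus-deletion; it shows directly via $\mathbb{F}_q$-point counts (Lemma~\ref{lem-5}) that the bad parameter set is strictly smaller than the full space, so some $(g,h_1,\dots,h_\rho)$ exists with zero dangerous $\mathcal{H}$-placements and no cleanup is needed.
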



\subsection*{Our approach}
Both Bukh’s method \cite{bukh2024extremal} and ours rely on the fact that non-regular sequences of polynomials form a small subset of all polynomial sequences. However, the way we quantify the smallness of this set differs from the one in~\cite{bukh2024extremal}. In Bukh's approach, this smallness is measured probabilitically. In contrast, we characterize it using algebro-geometric invariants, showing that this set has bounded degree (\cref{lem-9}) and bounded dimension (\cref{lem-10}). In particular, \cref{lem-9} provides an effective version of the classical result \cite{Valla98} that these sequences form a proper subvariety. Bounds of this nature are of considerable interest in commutative algebra \cite{ananyan2020small,berenstein1991effective,kollar1988sharp}. We note that bounding the dimension over the whole space ($\dim \mathcal{U}_{\mathbb{P}^N(\mathbb{K})}(m_1,\dots, m_s)$) was computed in \cite[Proposition~2.4]{ballico2023strength}, using techniques of Hilbert schemes \cite[Section~4.6.1]{sernesi2006deformations}. However, for our purpose, we need to compute in \cref{lem-10} the dimension ($\dim \mathcal{U}_{X}(m_1,\dots, m_s)$) for a subvariety $X \subseteq \mathbb{P}^N(\mathbb{K})$. The previous approach does not extend to this setting as the theory of Hilbert schemes for arbitrary varieties remains largely undeveloped.

Our approach is based on the equivalence between the regularity of polynomial sequences and the exactness of their corresponding Koszul complexes. While this connection is well known \cite{peeva2010graded}, the effective bound is obtained by a more delicate analysis of the Koszul complex. Our application of this bound further hinges on the technique of counting rational points in algebraic varieties.

\subsection*{Organization}
Section~\ref{sec:prelim} collects algebraic preliminaries. 
In Section~\ref{sec:nonregular} we develop the non-regular sequence machinery used in our constructions. 
Section~\ref{sec:turan} proves \cref{thm:Turan}; Section~\ref{sec:zarankiewicz} establishes \cref{thm:Zarankiewicz}.

\section{Preliminaries}\label{sec:prelim}
Let $q$ be a prime power and let $\mathbb{F}_q$ be the finite field of $q$ elements.  In this paper,  we reserve $\mathbb{F} = \overline{\mathbb{F}}_q$ for the algebraic closure of $\mathbb{F}_q$,  and we use $\mathbb{K}$ for an arbitrary field.  We denote by $\mathbb{P}^N(\mathbb{K})$ the $N$-dimensional projective space over a field $\mathbb{K}$.  By definition,  $\mathbb{P}^N(\mathbb{K}) = \left( \mathbb{K}^{N+1} \setminus \{0\} \right) / \sim $ where $v \simeq w$ for $v,  w\in \mathbb{K}^{N+1} \setminus \{0\}$ if and only if $v = \lambda w$ for some $\lambda \in \mathbb{K}\setminus \{0\}$.

\subsection{Commutative algebra}
Let $\mathbb{K}$ be a field and let $R = \mathbb{K}[x_{0},\cdots,x_{N}]$ be the polynomial ring over $\mathbb{K}$ in $N + 1$ variables.  The ideal generated by $f_1,\dots,  f_r \in R$ is denoted by $\langle f_1,\dots,  f_r \rangle$.  The \emph{height} of a prime ideal $\mathfrak{p}\subseteq R$ is 
\[
\height(\mathfrak{p}) = \max \lbrace
t: (0) = \mathfrak{p}_0 \subsetneq \mathfrak{p}_1 \subsetneq \cdots \subsetneq \mathfrak{p}_t = \mathfrak{p}, \mathfrak{p}_i~\text{is a prime ideal}, 0 \le i \le t 
\rbrace.
\]
The \emph{height} of an ideal $\mathfrak{a}\subseteq R$ is 
\[
\height(\mathfrak{a}) \coloneqq \min \lbrace
\height(\mathfrak{p}): \mathfrak{a} \subseteq \mathfrak{p},\mathfrak{p}~\text{is a prime ideal}
\rbrace.
\]
Given an integer $m \ge 0$,  we write $R_m$ for the subspace of $R$ consisting of degree $m$ homogeneous polynomials in $R$.  Consequently,  for each homogeneous ideal $\mathfrak{a}$ of $R$,  we have 
\begin{align}
\mathfrak{a} &= \oplus_{m = 0}^{\infty} \mathfrak{a}_m,\quad \text{where}\quad \mathfrak{a}_m \coloneqq R_m \cap \mathfrak{a},\quad \text{and}  \label{eq:graded ideal}\\
R/\mathfrak{a} &= \oplus_{m = 0}^{\infty} (R/\mathfrak{a})_m,\quad \text{where}\quad (R/\mathfrak{a})_m \coloneqq R_m/\mathfrak{a}_m.   \label{eq:graded ring}
\end{align}

For a sequence $f\coloneqq(f_{1},\dots,f_r) \in R^r$ of polynomials, let 
\[
\mathfrak{a}^{(i)}_f \coloneqq \begin{cases}
\langle f_1,\dots,  f_{i} \rangle \quad&\text{if~} 1 \le i \le r,\\
(0),  \quad &\text{if~} i = 0.
\end{cases}
\]
We say that $f$ is \emph{regular} if $\mathfrak{a}^{(r)}_f \subsetneq R$,  and for each $1 \le i \le r$,  the image of $f_i$ in $R/\mathfrak{a}^{(i-1)}_f$ is a non-zero divisor.   

Associated to every $f \in R^r$,  there is a \emph{Koszul complex}:   
\[
(K_{\bullet}(f),  d_{\bullet}(f)):0\to  \wedge^{r} R^{r}\to \wedge^{r - 1}R^{r}\to \cdots \to \wedge^{2}R^{r} \to R^{r}\to R \to 0.
\]
Here for each $0 \le i \le r$,  $\wedge^{i} R^r$ is the $i$-th wedge product of $R^r$ and the differential map $d_i(f): \wedge^{i}R^r \to \wedge^{i-1} R^r$ is the $R$-linear map determined by 
\[
d_i(f)(e_{j_{1}}\wedge\cdots\wedge e_{j_{i}})=\sum_{k=1}^{i}(-1)^{k+1}f_{j_{k}}e_{j_{1}}\wedge\cdots\wedge e_{j_{k-1}}\wedge \widehat{e}_{j_{k}}\wedge e_{j_{k+1}}\cdots\wedge e_{j_{i}},\quad 1\le j_1 < \cdots < j_i \le r,
\] 
where $e_1,\dots,  e_r$ is a basis of $R^r$ over $R$ and $\widehat{e}_{j_{k}}$ means that $e_{j_{k}}$ is omitted in the wedge product.  

According to \cite[Theorem~14.7]{peeva2010graded} and \cite[Lemma~3.2]{chen2025bounds},  the regularity of $f\in R^r$ is characterized by the exactness of $(K_{\bullet}(f),  d_{\bullet}(f))$ and the height of $\mathfrak{a}^{(r)}_f$.
\begin{lemma}[Criteria for regularity]\label{lem-6}
Let $R$ be a polynomial ring over a field $\mathbb{K}$.  For each $f \in R^r$,  the following are equivalent:
    \begin{enumerate}[(a)]
        \item $f$ is a regular sequence;
        \item $\Ker(d_1(f)) = \im(d_2(f))$;
        \item $\height( \mathfrak{a}^{(r)}_f )=r$.
    \end{enumerate}
\end{lemma}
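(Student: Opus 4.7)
The plan is to prove the cyclic implications $(a)\Rightarrow(b)\Rightarrow(c)\Rightarrow(a)$, combining the acyclicity of Koszul complexes of regular sequences, the Cohen--Macaulay property of $R$, and the rigidity of the first Koszul homology in the local setting.

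I would first prove $(a)\Rightarrow(b)$ by induction on $r$ using the mapping-cone description of Koszul complexes: $K_{\bullet}(f_1,\dots,f_r)$ is (up to a shift) the mapping cone of multiplication by $f_r$ on $K_{\bullet}(f_1,\dots,f_{r-1})$. The inductive hypothesis gives acyclicity of $K_{\bullet}(f_1,\dots,f_{r-1})$ in positive degrees with $H_0 = R/\mathfrak{a}_f^{(r-1)}$. The long exact sequence in homology, combined with $f_r$ acting as a non-zero divisor on $R/\mathfrak{a}_f^{(r-1)}$ (by the definition of regularity), then forces $H_i(K_{\bullet}(f))=0$ for all $i\ge 1$; in particular $\Ker(d_1(f))=\im(d_2(f))$.

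Next, for $(c)\Rightarrow(a)$, I would exploit that $R=\mathbb{K}[x_0,\dots,x_N]$ is Cohen--Macaulay and that a quotient of a Cohen--Macaulay ring by a regular sequence is again Cohen--Macaulay (hence unmixed). Krull's height theorem gives $\height(\mathfrak{a}_f^{(i)})\le i$ for each $i\le r$, and the hypothesis $\height(\mathfrak{a}_f^{(r)})=r$ forces $\height(\mathfrak{a}_f^{(i)})=i$ throughout. I would then induct on $i$: assuming $f_1,\dots,f_{i-1}$ is a regular sequence, the ring $R/\mathfrak{a}_f^{(i-1)}$ is Cohen--Macaulay, so every associated prime has height exactly $i-1$. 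If $f_i$ lay in any such prime $\mathfrak{p}$, then $\mathfrak{a}_f^{(i)}\subseteq\mathfrak{p}$ would force $\height(\mathfrak{a}_f^{(i)})\le i-1$, contradicting $\height(\mathfrak{a}_f^{(i)})=i$. Hence $f_i$ is a non-zero divisor modulo $\mathfrak{a}_f^{(i-1)}$, closing the induction.

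The delicate direction is $(b)\Rightarrow(c)$. Let $\mathfrak{p}$ be a minimal prime over $\mathfrak{a}_f^{(r)}$, so that $\height(\mathfrak{a}_f^{(r)})=\height(\mathfrak{p})$. Since localization is exact, the hypothesis $\Ker(d_1(f))=\im(d_2(f))$ descends to $H_1\bigl(K_{\bullet}(f)\otimes_R R_{\mathfrak{p}}\bigr)=0$ in the local Noetherian ring $R_{\mathfrak{p}}$, where each $f_i$ lies in the maximal ideal $\mathfrak{p}R_{\mathfrak{p}}$. The classical local rigidity of Koszul homology (proved by running the mapping-cone argument from the first step in reverse, using Nakayama's lemma to peel off one generator at a time) upgrades $H_1=0$ to full acyclicity and in particular shows that $f_1,\dots,f_r$ is a regular sequence in $R_{\mathfrak{p}}$. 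Hence $R_{\mathfrak{p}}$ contains a regular sequence of length $r$ in its maximal ideal, forcing $\dim R_{\mathfrak{p}}\ge r$. Combined with Krull's bound, this gives $r\le\height(\mathfrak{p})=\height(\mathfrak{a}_f^{(r)})\le r$, so equality holds in $(c)$. The main obstacle is this local rigidity step, since the natural mapping-cone induction runs in the opposite direction and one must carefully exploit locality (via Nakayama) to make the reverse induction go through independently of the ordering of the $f_i$'s.
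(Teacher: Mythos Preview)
Your proof is correct. The paper does not actually prove this lemma; it is stated with citations to \cite[Theorem~14.7]{peeva2010graded} and \cite[Lemma~3.2]{chen2025bounds} and treated as background. Your cyclic argument---mapping-cone induction for $(a)\Rightarrow(b)$, the Cohen--Macaulay/unmixedness induction for $(c)\Rightarrow(a)$, and localization at a minimal prime of $\mathfrak{a}_f^{(r)}$ followed by local Koszul rigidity (via Nakayama) for $(b)\Rightarrow(c)$---is the standard textbook route, so you have supplied what the paper leaves to the literature.

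The only spot that deserves one more sentence is in your $(c)\Rightarrow(a)$ step: the assertion that $\height(\mathfrak{a}_f^{(r)})=r$ forces $\height(\mathfrak{a}_f^{(i)})=i$ for every intermediate $i$ is not a formal consequence of Krull's inequality by itself. It uses that the polynomial ring is a catenary domain, so that $\height\mathfrak{q}=\height\mathfrak{p}+\height(\mathfrak{q}/\mathfrak{p})$ for nested primes; then a minimal prime $\mathfrak{p}$ over $\mathfrak{a}_f^{(i)}$ with $\height\mathfrak{p}<i$ would, after applying Krull to the images of $f_{i+1},\dots,f_r$ in $R/\mathfrak{p}$, produce a prime over $\mathfrak{a}_f^{(r)}$ of height $<r$. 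With that remark added, the argument is complete.
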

We will also need the following fact in computational commutative algebra.
\begin{lemma}[Bounded generation of kernel]\cite{hermann1926frage,hermann1998question}\label{lem-7}
There exists a function $B_1: \mathbb{N}^4  \to \mathbb{N}$ with the following property.  For any field $\mathbb{K}$ and matrix $A \in R^{a \times b}$,  where $R= \mathbb{K}[x_0,\dots, x_N]$ and elements of $A$ are homogeneous polynomials of degree at most $m$,  the $R$-module $\Lk(A) \coloneqq \{v\in R^a: v A = 0\}$ is generated by vectors in $R^a$ whose elements are polynomials of degree at most $B_1(N,m,a,b)$.
\end{lemma}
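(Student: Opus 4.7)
The plan is to establish the degree bound using Gröbner basis theory combined with Schreyer's theorem on syzygies. By Hilbert's basis theorem, $\Lk(A)$ is already known to be a finitely generated $R$-module, so the entire content of the lemma is the \emph{effective}, uniform-in-$(N,m,a,b)$ degree bound on some choice of generators.

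The first step is to pass to the transpose and reduce to a right-kernel problem: setting $B \coloneqq A^{\tp}$, one has $\Lk(A) \cong \Ker(B)$ for the $R$-linear map $B\colon R^a \to R^b$, $w \mapsto B w$. Next I would fix a monomial order on the free module $R^b$ (say a position-over-term refinement of the graded reverse lexicographic order) and compute a Gröbner basis $G = \{g_1,\dots,g_s\}$ of the submodule $M \coloneqq \im(B) \subseteq R^b$. A classical Hermann-type effective bound, proved by induction on $N$ via iterated linear elimination over the fraction fields $\mathbb{K}(x_0,\dots,x_i)$ followed by denominator clearing, guarantees that $G$ can be chosen so that $\deg g_i \le F(N,m,a,b)$ for some explicit function $F$. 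The second step is to invoke Schreyer's theorem: the syzygy module of $\{g_1,\dots,g_s\}$ is generated by the explicit lifts of the $S$-pair relations produced by the division algorithm, each of which has degree at most roughly $2F(N,m,a,b)$. Finally, I would translate syzygies of $G$ back to syzygies of the columns of $B$ using the change-of-basis matrices between $G$ and the columns of $B$. Since both sets generate the same submodule $M$, these matrices have entries of bounded degree (again from the division algorithm), so the translation inflates degrees by at most a polynomial factor. Taking $B_1(N,m,a,b)$ to be an appropriate polynomial in $F$ yields the stated bound.

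The principal obstacle is the existence of the Gröbner basis bound $F$. The Mayr--Meyer construction shows that any such bound must be doubly exponential in $N$ in the worst case, and this is essentially what Hermann \cite{hermann1926frage,hermann1998question} already achieves. Since the present lemma only asserts the existence of \emph{some} function $B_1$, any of these classical effective estimates is sufficient for the applications in the remainder of the paper; the argument above is therefore best presented as a citation rather than a self-contained derivation.
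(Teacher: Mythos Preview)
The paper does not prove this lemma at all: it is stated with the citation \cite{hermann1926frage,hermann1998question} and used as a black box. Your proposal correctly identifies this at the end, noting that the argument ``is therefore best presented as a citation rather than a self-contained derivation,'' which is exactly what the paper does.

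Your sketch goes well beyond the paper by outlining an actual proof. It is essentially correct, though it conflates two distinct routes: Hermann's original 1926 argument predates Gr\"obner bases and proceeds purely by the linear-elimination-over-fraction-fields induction you mention, yielding the doubly exponential bound directly; the Gr\"obner/Schreyer approach is a later, conceptually cleaner repackaging that ultimately relies on the same kind of degree bound for the Gr\"obner basis itself. Either route suffices here since only the \emph{existence} of $B_1$ is needed, and your closing remark that any of these classical estimates is enough for the paper's purposes is exactly right.
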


\subsection{Algebraic geometry}
The following two facts are standard in algebraic geometry. 
\begin{lemma}[Fiber dimension formula]
\cite[Proposition~10.6.1]{grothendieck1966elements}\label{lem-4}
Assume that $X$ and $Y$ are quasi-projective varieties over $\mathbb{K}$ and $f: X \to Y$ is a regular map.  If for any $y\in Y$, $\dim f^{-1}(y)\ge d$ (resp.  $\dim f^{-1}(y) \le d$),  then $\dim X \ge d + \dim Y$ (resp. $\dim X  \le d + \dim Y$).
\end{lemma}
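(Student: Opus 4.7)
The plan is to reduce to the case where both $X$ and $Y$ are irreducible and $f$ is dominant, and then invoke the classical fiber dimension theorem for dominant morphisms of irreducible varieties: if $g \colon X' \to Y'$ is dominant with $X', Y'$ irreducible, then every nonempty fiber $g^{-1}(y)$ has dimension at least $\dim X' - \dim Y'$, with equality on a nonempty Zariski open $U \subseteq Y'$.

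In the irreducible dominant case both conclusions are immediate. Choosing $y_0 \in U$ gives $\dim X - \dim Y = \dim f^{-1}(y_0)$, so if every nonempty fiber has dimension at least (resp.\ at most) $d$, then $\dim X - \dim Y \geq d$ (resp.\ $\leq d$). For the upper bound direction one need not even assume dominance a priori: replacing $Y$ by $\overline{f(X)}$ makes $f$ dominant, and $\dim \overline{f(X)} \leq \dim Y$.

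The reduction to the irreducible case is the actual work. Decompose $X = \bigcup_{i} X_i$ into irreducible components. For the upper bound, the restriction $f|_{X_i}\colon X_i \to \overline{f(X_i)}$ has every fiber of dimension at most $d$, so the irreducible case yields $\dim X_i \leq d + \dim \overline{f(X_i)} \leq d + \dim Y$, and taking the maximum over $i$ gives $\dim X \leq d + \dim Y$. For the lower bound, fix an irreducible component $Y_0 \subseteq Y$ with $\dim Y_0 = \dim Y$, decompose $f^{-1}(Y_0) = \bigcup_i X_i$ into irreducible components, and set
\[
Z_i \coloneqq \{\, y \in Y_0 : \dim f|_{X_i}^{-1}(y) \geq d \,\}.
\]
By upper semi-continuity of the fiber-dimension function, each $Z_i$ is Zariski closed in $Y_0$. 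Since $\dim f^{-1}(y) = \max_i \dim f|_{X_i}^{-1}(y)$ for every $y \in Y_0$, the hypothesis forces $\bigcup_i Z_i = Y_0$, and irreducibility of $Y_0$ yields $Z_{i_0} = Y_0$ for some $i_0$. Applying the irreducible case to $X_{i_0} \to Y_0$ then gives $\dim X \geq \dim X_{i_0} \geq d + \dim Y$.

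The main obstacle is precisely this last reduction: one must upgrade a pointwise hypothesis (every $y$ has some component of $f^{-1}(y)$ of dimension at least $d$) to a global statement (one single irreducible component of $X$ already has a large fiber over every point of $Y_0$). Upper semi-continuity of fiber dimension combined with irreducibility of $Y_0$ is exactly what bridges this gap; everything else is bookkeeping around the classical fiber dimension formula.
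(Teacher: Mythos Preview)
The paper does not prove this lemma; it is quoted as a standard fact with a citation to EGA~IV and no argument is given. Your proposal is the standard reduction-to-irreducible proof and is essentially correct.

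One small inaccuracy: you assert that each
\[
Z_i = \{\, y \in Y_0 : \dim f|_{X_i}^{-1}(y) \ge d \,\}
\]
is Zariski closed in $Y_0$ by upper semi-continuity. Upper semi-continuity of fiber dimension holds on the \emph{source}: the set $\{x \in X_i : \dim_x f|_{X_i}^{-1}(f(x)) \ge d\}$ is closed in $X_i$, and $Z_i$ is its image under $f|_{X_i}$. Without properness this image is only constructible, not closed (e.g.\ take $f\colon \mathbb{A}^2 \to \mathbb{A}^2$, $(x,y)\mapsto (x,xy)$, and $d=0$). This does not break your argument: a finite union of constructible sets covering the irreducible variety $Y_0$ forces at least one $Z_{i_0}$ to contain a nonempty open $V \subseteq Y_0$; then $f|_{X_{i_0}}$ is dominant onto $Y_0$, and choosing $y_0$ in $V$ intersected with the generic-fiber locus $U$ gives $\dim X_{i_0} - \dim Y_0 = \dim f|_{X_{i_0}}^{-1}(y_0) \ge d$ as you want. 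So replace ``closed'' by ``constructible'' and the proof goes through unchanged.
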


\begin{lemma}[Generalized Bezout theorem]
\cite[Example~12.3.1]{fulton2013intersection}\label{thm-2}
    Let $X$ and $Y$ be two quasi-projective subvarieties of $\mathbb{P}^{N}(\mathbb{K})$, then $\deg(X \cap Y)\le \deg(X) \deg(Y)$.
\end{lemma}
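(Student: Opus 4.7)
Since the degree of a quasi-projective subvariety of $\mathbb{P}^{N}$ equals that of its projective closure, I may assume $X$ and $Y$ are closed in $\mathbb{P}^{N}$; passing from $\overline{X}\cap\overline{Y}$ to the open subset $X\cap Y$ can only drop components, so it suffices to prove the bound in the projective closed case. My approach is reduction to the diagonal, combined with a Chow-ring computation in $\mathbb{P}^{N}\times\mathbb{P}^{N}$.

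First I would identify $X\cap Y$ set-theoretically with $(X\times Y)\cap \Delta$, where $\Delta\subseteq \mathbb{P}^{N}\times\mathbb{P}^{N}$ is the diagonal. Working in the Chow ring $A^{*}(\mathbb{P}^{N}\times\mathbb{P}^{N})\cong \mathbb{Z}[h_{1},h_{2}]/(h_{1}^{N+1},h_{2}^{N+1})$, the K\"unneth formula together with the degree-class identity in $\mathbb{P}^{N}$ yields $[X\times Y]=\deg(X)\deg(Y)\,h_{1}^{\codim X}h_{2}^{\codim Y}$, while a standard computation of the diagonal class gives $[\Delta]=\sum_{i+j=N}h_{1}^{i}h_{2}^{j}$. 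Multiplying these classes in the Chow ring and pushing forward along either projection to $\mathbb{P}^{N}$ produces a cycle class on $X\cap Y$ whose total degree is exactly $\deg(X)\deg(Y)$.

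Next I would promote this class-level computation to a geometric statement using Fulton's refined intersection product $[X]\cdot[Y]\in A_{*}(X\cap Y)$, built via deformation to the normal cone of $\Delta$. It realizes the class above as an effective cycle $\sum_{Z}i(Z;X,Y)[Z]$, supported on the irreducible components $Z$ of $X\cap Y$, with each multiplicity $i(Z;X,Y)\ge 1$. Pushing this effective cycle forward to $\mathbb{P}^{N}$ gives $\sum_{Z}i(Z;X,Y)\deg(Z)=\deg(X)\deg(Y)$, so dropping the multiplicities yields $\deg(X\cap Y)=\sum_{Z}\deg(Z)\le \deg(X)\deg(Y)$, which is the desired inequality.

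The main obstacle is the improper-intersection case, where $\dim(X\cap Y)>\dim X+\dim Y-N$. In that regime the multiplicities $i(Z;X,Y)$ are not determined by dimension counts but involve Segre-class contributions from the excess normal bundle of the embedding $\Delta\hookrightarrow\mathbb{P}^{N}\times\mathbb{P}^{N}$. The crucial input is that these excess contributions are \emph{nonnegative}, a consequence of the positivity of Segre classes of cones inside an ample bundle (applied to the normal bundle of $\Delta$, which is isomorphic to $T\mathbb{P}^{N}$). This positivity is precisely what turns the degree equality in the proper case into an inequality in general; verifying it carefully via the Fulton--MacPherson construction is where the real work lies.
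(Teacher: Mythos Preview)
The paper does not supply its own proof of this lemma; it is quoted verbatim as a preliminary fact with a citation to Fulton's \emph{Intersection Theory}, Example~12.3.1, and never argued within the paper. Your outline is essentially Fulton's own argument at that spot: reduction to the diagonal, the refined intersection product $X\cdot Y$ as a positive cycle supported on the components of $X\cap Y$, and the positivity of the excess contributions (Fulton, Theorem~12.3) to handle the improper case. So there is nothing to compare---you have reconstructed the proof the paper is pointing to, and your identification of the positivity of Segre classes as the crux is accurate.
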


Recall that $\mathbb{F} = \overline{\mathbb{F}}_q$ and $\mathbb{P}^N(\mathbb{K})$ is the $N$-dimensional projective space over a field $\mathbb{K}$.  For each projective subvariety $X \subseteq \mathbb{P}^N(\mathbb{F})$,  we define $X(\mathbb{F}_q) \coloneqq X \cap \mathbb{P}^N(\mathbb{F}_q)$.  According to the next two lemmas,  $| X(\mathbb{F}_q)  |$ can be bounded in terms of $\dim(X)$ and $\deg(X)$.
\begin{lemma}[Number of $\mathbb{F}_q$-points I]
\cite[Theorem~1]{lang1954number}\label{thm-1}
There is a function $C: \mathbb{N}^3 \to \mathbb{N}$ with the following property.  For any prime power $q$ and any irreducible projective subvariety $X \subseteq \mathbb{P}^N(\mathbb{F})$ defined over $\mathbb{F}_q$ with $\dim X = n$ and $\deg X = k$,  we have 
\[
\left\lvert  |X(\mathbb{F}_q)|- q^{n}  \right\rvert \le (k-1)(k-2)q^{n- \frac{1}{2}}+C(n,k,N) q^{n-1}.
\]
\end{lemma}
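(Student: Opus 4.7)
The plan is to proceed by induction on $n = \dim X$, with Weil's Riemann hypothesis for curves serving as the engine in the base case and an effective Bertini-style hyperplane-slicing argument over $\mathbb{F}_q$ driving the inductive step.

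For the base case $n=1$, I would pass from $X$ to its normalization $\widetilde{X}$, a smooth projective curve of geometric genus $g$. Weil's theorem for smooth curves gives $\bigl||\widetilde{X}(\mathbb{F}_q)|-(q+1)\bigr| \le 2g\sqrt{q}$. The normalization map is bijective outside the singular locus of $X$, whose cardinality over $\mathbb{F}_q$ is bounded by a quantity depending only on $k$ and $N$, and the classical Castelnuovo/arithmetic-genus bound yields $g \le (k-1)(k-2)/2$ for a nondegenerate curve of degree $k$ in $\mathbb{P}^N$. Assembling these pieces delivers the base case with a constant $C(1,k,N)$, and the coefficient $(k-1)(k-2)$ in the main term of the lemma is already exposed at this step.

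For the inductive step, assume the bound holds in dimension $n-1$. For $X\subseteq\mathbb{P}^N(\mathbb{F})$ irreducible of dimension $n$ and degree $k$, the key move is to double-count incidences between $X(\mathbb{F}_q)$ and hyperplanes $H\subseteq\mathbb{P}^N$ defined over $\mathbb{F}_q$. Each point $x\in X(\mathbb{F}_q)$ lies in exactly $|\mathbb{P}^{N-1}(\mathbb{F}_q)|$ such hyperplanes, yielding
\[
|X(\mathbb{F}_q)| \cdot |\mathbb{P}^{N-1}(\mathbb{F}_q)| \;=\; \sum_{H} |(X\cap H)(\mathbb{F}_q)|.
\]
By an effective Bertini theorem over $\mathbb{F}_q$, all but $O_{k,N}(q^{N-1})$ of the hyperplanes $H$ cut out an absolutely irreducible subvariety $X\cap H$ of dimension $n-1$ and degree $k$, to which the inductive hypothesis applies directly. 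For the $O_{k,N}(q^{N-1})$ exceptional hyperplanes, an elementary Bezout-type bound $|(X\cap H)(\mathbb{F}_q)| \le k\, q^{n-1}+O_{k,N}(q^{n-2})$ (via \cref{thm-2}) is enough. Dividing the identity through by $|\mathbb{P}^{N-1}(\mathbb{F}_q)| = q^{N-1}+O(q^{N-2})$ and carefully tracking the $q^{n-1/2}$ and $q^{n-1}$ error terms then produces the stated bound.

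The hard part will be the effective Bertini step: classical Bertini guarantees absolute irreducibility of a generic hyperplane section only over an algebraically closed base, but here I need a quantitative bound on the number of $\mathbb{F}_q$-hyperplanes in the exceptional locus inside the dual $\mathbb{P}^N$, which requires controlling both the codimension and the degree of that locus. The standard way to do this is to first reduce $X$ to a hypersurface via a generic linear projection to $\mathbb{P}^{n+1}$ and then analyze the corresponding discriminant variety. A secondary delicate point is propagating a clean constant $C(n,k,N)$ through the induction, since each layer compounds the Bertini exception count and the Bezout estimate, and one must verify that the resulting constants depend only on $n$, $k$, and $N$ and not on $q$.
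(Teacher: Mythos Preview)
The paper does not prove this lemma at all: it is stated in the preliminaries and attributed to Lang--Weil \cite[Theorem~1]{lang1954number} without further argument. Your proposal is essentially a faithful sketch of the original Lang--Weil proof itself---induction on $\dim X$, Weil's curve bound plus the Castelnuovo genus inequality for the base case, and the double-counting of point-hyperplane incidences with an effective Bertini bound on bad hyperplanes for the inductive step---so while the approach is sound and classical, it goes well beyond what the paper does (namely, just cite the result).
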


\begin{lemma}[Number of $\mathbb{F}_q$-points II]
\cite[Corollary~3.3]{couvreur2016upper}\label{lem-5}
Let $X$ be a projective subvariety of $ \mathbb{P}^N(\mathbb{F})$ with $\dim X = n$ and $\deg X = k$.  Then $\lvert X(\mathbb{F}_{q})\rvert\le \frac{k(q^{n+1} - 1)}{q-1}$.
\end{lemma}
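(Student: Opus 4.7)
The plan is to prove the bound by induction on $n = \dim X$, splitting $X$ along a hyperplane $H$ into the section $X\cap H$ (handled by induction) and an affine complement $X\setminus H$ (handled by a finite projection). First, it suffices to treat the case when $X$ is irreducible: if $X=X_1\cup\cdots\cup X_s$, then $|X(\mathbb{F}_q)|\le\sum_i|X_i(\mathbb{F}_q)|$, and in the natural convention where $\deg X$ is the sum of the degrees of its irreducible components, applying the desired bound to each $X_i$ (whose dimension is at most $n$) and summing gives the bound for $X$.

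The base case $n=0$ is trivial, as $X$ consists of at most $k$ points. For the inductive step with $n\ge 1$, if $n=N$ then necessarily $X=\mathbb{P}^N$ and $k=1$, so the bound is sharp. Otherwise $n<N$, and we choose an $\mathbb{F}_q$-rational hyperplane $H\subset\mathbb{P}^N$ not containing $X$; such an $H$ exists because there are $\frac{q^{N+1}-1}{q-1}$ rational hyperplanes but only finitely many contain $X$. By \cref{thm-2}, the intersection $Y\coloneqq X\cap H$ has dimension $n-1$ and degree at most $k$, so inside $H\cong\mathbb{P}^{N-1}$ the inductive hypothesis yields
$$|Y(\mathbb{F}_q)|\;\le\;k\cdot\frac{q^n-1}{q-1}.$$
To finish the induction it remains to establish
$$|(X\setminus H)(\mathbb{F}_q)|\;\le\;k\,q^n,$$
after which summation gives $|X(\mathbb{F}_q)|\le k\cdot\frac{q^n-1}{q-1}+kq^n=\frac{k(q^{n+1}-1)}{q-1}$ exactly.

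The intended route for the affine estimate is to produce an $\mathbb{F}_q$-rational linear projection $\pi\colon\mathbb{A}^N=\mathbb{P}^N\setminus H\to\mathbb{A}^n$ whose restriction to $X\setminus H$ is finite of degree at most $k$: then every $\mathbb{F}_q$-fiber carries at most $k$ rational points, giving the desired bound via $|\mathbb{A}^n(\mathbb{F}_q)|=q^n$. The main obstacle is exactly the $\mathbb{F}_q$-rationality of such a projection. The locus of projection centers that give a finite morphism of degree at most $k$ on $X\setminus H$ is a dense Zariski-open subset of a Grassmannian, so for $q$ large compared to $k$ and $N$ an $\mathbb{F}_q$-point in this locus exists by a straightforward dimension count; for small $q$, however, this open set may have no $\mathbb{F}_q$-point, and one must instead invoke Noether normalization over $\mathbb{F}_q$ (which always delivers a finite $\mathbb{F}_q$-morphism $X\setminus H\to\mathbb{A}^n$) together with a Bezout-type control of its degree by $\deg X$. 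Carrying out this delicate refinement uniformly in $q$ is precisely the technical content of \cite{couvreur2016upper}.
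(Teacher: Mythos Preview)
The paper does not prove this lemma; it is quoted verbatim from \cite[Corollary~3.3]{couvreur2016upper} with no accompanying argument. So there is no ``paper's proof'' to compare against, and your write-up should be evaluated on its own terms.

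As a sketch of the standard inductive approach your outline is reasonable, but two points deserve comment. First, the justification for the existence of an $\mathbb{F}_q$-rational hyperplane $H$ not containing $X$ is not quite right as written: ``only finitely many contain $X$'' is vacuous here, since there are only finitely many $\mathbb{F}_q$-hyperplanes to begin with. What you actually need is that not \emph{all} of them contain $X$. The clean argument is that the hyperplanes containing an irreducible $X$ of dimension $n\ge 1$ are parametrized by a linear subspace of $(\mathbb{P}^N)^*$ of dimension at most $N-n-1\le N-2$, hence with at most $\frac{q^{N-1}-1}{q-1}<\frac{q^{N+1}-1}{q-1}$ rational points.

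Second, and more importantly, you yourself acknowledge that the crux of the argument --- producing an $\mathbb{F}_q$-rational finite projection $X\setminus H\to\mathbb{A}^n$ of degree at most $k$ that works for \emph{all} $q$, not just large $q$ --- is ``precisely the technical content of \cite{couvreur2016upper}''. That is an honest assessment, but it means your proposal is a roadmap rather than a proof: the genuinely nontrivial step has been identified and then deferred back to the very reference the lemma is cited from. Since the paper is content to cite the result, this is not a defect relative to the paper, but you should be aware that what you have written does not constitute an independent proof.
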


Suppose $R = \mathbb{F}[x_0,\dots,  x_N]$ is the polynomial ring over $\mathbb{F}$ in $N+1$ variables.  Each homogeneous ideal $\mathfrak{a} = \langle f_1,\dots,  f_r \rangle \subseteq R$ defines a projective subvariety of $ \mathbb{P}^N(\mathbb{F})$,  denoted as $V(\mathfrak{a})$ or $V(f_1,\dots,  f_r)$. The \emph{homogeneous coordinate ring} of a projective subvariety $X \subseteq \mathbb{P}^{N}(\mathbb{F})$ is $\mathbb{F}[X] \coloneqq R/\mathfrak{a}_X$,  where $\mathfrak{a}_X$ is the defining ideal of $X$.  Note that $\mathfrak{a}_X$ is a homogeneous ideal.  By \eqref{eq:graded ring}, $\mathbb{F}[X]$ is graded as $\mathbb{F}[X] = \oplus_{m=0}^\infty \mathbb{F}[X]_m$. The \emph{Hilbert function} of $X$ is defined as
\[
h_X: \mathbb{N} \to \mathbb{N},\quad h_X (m) = \dim_{\mathbb{F}} \mathbb{F}[X]_m.
\]
\begin{lemma}\cite[Remark~13.10]{harris2013algebraic}\label{lem-1}
For any $k$-dimensional subvariety $X \subseteq \mathbb{P}^N(\mathbb{F})$ and any integer $m \ge 0$,  we have $h_X(m) \ge\binom{m+k}{m}$.
\end{lemma}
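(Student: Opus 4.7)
The plan is to exhibit a graded injection from a polynomial ring in $k+1$ variables into $\mathbb{F}[X]$, which immediately forces $h_X(m) \ge \binom{m+k}{m}$ in each degree. The bridge is graded Noether normalization: $\mathbb{F}[X]$ is a finitely generated graded $\mathbb{F}$-algebra whose Krull dimension equals $\dim X + 1 = k+1$, and over the infinite field $\mathbb{F}=\overline{\mathbb{F}}_q$ one can choose the generators of the Noether normalization to be \emph{linear forms}. Concretely, I would produce algebraically independent elements $\ell_0,\dots,\ell_k \in \mathbb{F}[X]_1$ such that $\mathbb{F}[X]$ is a finite module over $\mathbb{F}[\ell_0,\dots,\ell_k]$; algebraic independence then identifies this subalgebra with a polynomial ring $\mathbb{F}[y_0,\dots,y_k]$ as graded $\mathbb{F}$-algebras.

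Once the graded inclusion $\mathbb{F}[y_0,\dots,y_k] \hookrightarrow \mathbb{F}[X]$ is in hand, restricting to its degree-$m$ component gives an injection of $\mathbb{F}$-vector spaces, so
\[
h_X(m) = \dim_{\mathbb{F}} \mathbb{F}[X]_m \;\ge\; \dim_{\mathbb{F}} \mathbb{F}[y_0,\dots,y_k]_m = \binom{m+k}{m},
\]
which is exactly the claimed inequality.

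The one point to check carefully is arranging Noether normalization with degree-one generators. I would select $\ell_0,\dots,\ell_k$ as a sufficiently generic $\mathbb{F}$-linear combination of the images of $x_0,\dots,x_N$ in $\mathbb{F}[X]_1$; because $\mathbb{F}$ is infinite, generic choices avoid the finitely many bad loci (algebraic dependence, failure of finiteness of the projection to the linear subspace cut out by the remaining coordinates). An alternative route, useful if one prefers to avoid invoking normalization, is induction on $k$: for $k\ge 1$ a generic hyperplane $H$ meets each irreducible component of $X$ properly, so its defining linear form $\ell$ is a non-zero divisor in $\mathbb{F}[X]$; the resulting short exact sequence $0 \to \mathbb{F}[X]_{m-1} \xrightarrow{\cdot\ell} \mathbb{F}[X]_m \to (\mathbb{F}[X]/\ell)_m \to 0$, together with Pascal's identity $\binom{m+k}{m} = \binom{m+k-1}{m-1} + \binom{m+k-1}{m}$, telescopes the inductive bound $h_{X\cap H}(m) \ge \binom{m+k-1}{m}$ into the claim. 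Either route is standard, and the only real work is the genericity argument, which is automatic over an infinite field.
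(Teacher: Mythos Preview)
The paper does not give its own proof of this lemma: it simply records the inequality as \cite[Remark~13.10]{harris2013algebraic} and moves on. Your proposal, by contrast, supplies an actual argument, and both routes you sketch are correct and standard. The Noether-normalization approach is the cleanest: over the infinite field $\mathbb{F}=\overline{\mathbb{F}}_q$ one may indeed take a system of parameters for $\mathbb{F}[X]$ consisting of $k+1$ linear forms, yielding a graded embedding of a polynomial ring in $k+1$ variables and hence the bound $\binom{m+k}{m}$ in each degree. Your inductive alternative via a generic hyperplane section also works; the only point to keep straight there is that $\mathbb{F}[X]/(\ell)$ need not be reduced, but it still surjects onto $\mathbb{F}[X\cap H]$, so $\dim_{\mathbb{F}}(\mathbb{F}[X]/\ell)_m \ge h_{X\cap H}(m)$, which is the inequality in the direction you need. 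Either way, you are filling in a proof the paper omits rather than diverging from one it gives.
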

We consider the Veronese map $\nu_m: \mathbb{K}^{N+1} \to  \mathbb{K}^{\binom{N + m}{m}}$ defined by 
\begin{equation}\label{eq:veronese}
    \nu_m (u_{0},\cdots,u_{N}) \coloneqq (u_{i_1}\cdots u_{i_m})_{0 \le i_1 \le \cdots \le i_m \le N}. 
\end{equation}
It is worth noticing that if $X = \{ [v_1],\dots,  [v_s]\} \subseteq \mathbb{P}^N(\mathbb{F})$ is a finite set,   then we have 
\begin{equation}\label{eq:hX}
h_X(m) = \dim \spa_{\mathbb{F}} \left\lbrace
\nu_m([v_1]),\dots,  \nu_m([v_{s}])
\right\rbrace.
\end{equation}
Thus,  $h_X(m)$ measures the linear dependence of $\nu_m([v_1]),\dots,  \nu_m([v_{s}])$. Although the study of $h_X$ dated back to 1887 \cite{Cayley87},  the following notion was introduced fairly recently \cite{bukh2024extremal}.

\begin{definition}[$s$-wise $m$-independence]\label{def-1}
Let $s,m \ge 0$ be fixed integers and let $X \subseteq \mathbb{P}^N(\mathbb{F})$ be a subset. 
\begin{itemize}
\item The set $X$ is $s$-wise $m$-independent if $h_S(m) = s$ for any $S \subseteq X$ such that $|S| = s$.  
\item Suppose further that $X$ is a finite set. Then $X$ is minimally $m$-dependent if $X$ is not $|X|$-wise $m$-independent,  but any proper subset $Y \subsetneq X$ is $|Y|$-wise $m$-independent. 
\end{itemize}
\end{definition}

Given integers $N, t,  m \ge 0$,  we define 
\[
X(N,t,m)\coloneqq
\{([v_{1}],\cdots,[v_{t}])\in (\mathbb{P}^{N}(\mathbb{F}))^{t}: \{[v_{1}],\dots,[v_{t}]\}\text{~is minimally $m$-dependent} \}.
\]
By definition,  $X(N,t,m)$ is a quasi-projective subvariety of $(\mathbb{P}^{N}(\mathbb{F}))^{t}$.  As a result,  the function $\psi(N,t,m) \coloneqq \dim X(N,t,m)$ is well-defined.  The following lemma states that $s$-wise $m$-independent sets exist over sufficiently large fields. 
\begin{lemma}[$s$-wise $m$-independent sets over large fields]\cite[Lemma~15]{bukh2024extremal}\label{lem-2}
There is a function $q_0: \mathbb{N}^4 \to \mathbb{N}$ with the following property.  If $N,s,m,r$ are positive integers such that $N > r>\max_{2 \le t \le s} \left\lbrace \frac{\psi(N,t,m)}{t-1} \right\rbrace$,  then for any prime power  $q \ge q_0(N,s,m,r)$,  there exist $f_{1},\dots,f_{r}\in \mathbb{F}_q[x_0,\dots,  x_N]_m$ such that $V(f_{1},\dots,f_{r})(\mathbb{F}_q)$ is $s$-wise $m$-independent.
\end{lemma}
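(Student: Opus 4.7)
The plan is a counting argument at the level of $\mathbb{F}_q$-points: I will bound the number of bad sequences $(f_1,\dots,f_r)\in(\mathbb{F}_q[x_0,\dots,x_N]_m)^r$---those for which $V(f_1,\dots,f_r)(\mathbb{F}_q)$ fails to be $s$-wise $m$-independent---and show this is $o\bigl(q^{r\binom{N+m}{m}}\bigr)$ as $q\to\infty$. A sequence is bad precisely when some $t$-subset with $2\le t\le s$ of $V(f_1,\dots,f_r)(\mathbb{F}_q)$ is minimally $m$-dependent, so by a union bound it suffices to handle each $t$ separately by counting ordered tuples in $X(N,t,m)(\mathbb{F}_q)$.

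Fix $t\in\{2,\dots,s\}$. The quasi-projective variety $X(N,t,m)\subseteq(\mathbb{P}^N(\mathbb{F}))^t$ has dimension $\psi(N,t,m)$; pushing forward under the Segre embedding and applying \cref{lem-5} gives
\[
|X(N,t,m)(\mathbb{F}_q)|\ \le\ C_1(N,t,m)\,q^{\psi(N,t,m)}
\]
for a constant $C_1$ bounded in terms of $N,t,m$ only. For a fixed ordered tuple $([v_1],\dots,[v_t])\in X(N,t,m)(\mathbb{F}_q)$, minimality of the $m$-dependence forces $h_{\{[v_1],\dots,[v_t]\}}(m)=t-1$, so by \eqref{eq:hX} the Veronese vectors $\nu_m([v_j])$ span a $(t-1)$-dimensional subspace of $\mathbb{F}^{\binom{N+m}{m}}$. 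Because rank is invariant under base change, the $\mathbb{F}_q$-linear evaluation map $\mathbb{F}_q[x_0,\dots,x_N]_m\to\mathbb{F}_q^t$, $f\mapsto(f(v_j))_j$, also has rank $t-1$, so its kernel has $\mathbb{F}_q$-dimension $\binom{N+m}{m}-(t-1)$. Hence the number of $r$-tuples of polynomials simultaneously vanishing on $[v_1],\dots,[v_t]$ equals $q^{r(\binom{N+m}{m}-(t-1))}$.

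Multiplying and summing, the total bad count is at most
\[
\sum_{t=2}^{s}C_1(N,t,m)\,q^{\psi(N,t,m)-r(t-1)}\cdot q^{r\binom{N+m}{m}}.
\]
By the hypothesis $r>\psi(N,t,m)/(t-1)$ for every $2\le t\le s$, each exponent $\psi(N,t,m)-r(t-1)$ is strictly negative, so the ratio of bad sequences to the total $q^{r\binom{N+m}{m}}$ tends to $0$ as $q\to\infty$. Taking $q_0(N,s,m,r)$ large enough that this ratio is less than $1$ produces the desired $(f_1,\dots,f_r)$; the condition $N>r$ then guarantees via Krull's height theorem that $V(f_1,\dots,f_r)$ is positive-dimensional, rendering the conclusion non-vacuous.

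The principal obstacle is securing the uniform degree bound controlling $C_1(N,t,m)$ in \cref{lem-5}. I would describe $X(N,t,m)$ as a constructible subset of $(\mathbb{P}^N(\mathbb{F}))^t$, cut out by the vanishing of all $t\times t$ minors of the matrix of Veronese images together with the non-vanishing of some $(t-1)\times(t-1)$ minor on each proper subset of its rows, and then invoke Bezout (\cref{thm-2}) to bound the degree of its Segre image in terms of $N,t,m$ alone.
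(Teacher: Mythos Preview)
The paper does not prove this lemma; it simply cites \cite[Lemma~15]{bukh2024extremal} and moves on. Your counting argument is correct and is exactly the argument Bukh gives: union-bound over $t\in\{2,\dots,s\}$ and over minimally $m$-dependent ordered $t$-tuples, using $|X(N,t,m)(\mathbb{F}_q)|=O_{N,t,m}(q^{\psi(N,t,m)})$ and the fact that such a tuple has $h=t-1$, so the polynomials vanishing on it form a codimension-$(t-1)$ subspace of $R_m$.

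Two small remarks. First, to apply \cref{lem-5} you need a projective variety; since $X(N,t,m)$ is only quasi-projective, you should bound $|X(N,t,m)(\mathbb{F}_q)|$ by the $\mathbb{F}_q$-points of its Zariski closure in the Segre image of $(\mathbb{P}^N)^t$, which has the same dimension and degree bounded via \cref{thm-2} exactly as you outline. Second, your closing sentence about $N>r$ and Krull's height theorem is a side comment rather than part of the proof: the counting argument already delivers the existence of good $(f_1,\dots,f_r)$ regardless, and positive dimension over $\overline{\mathbb{F}_q}$ does not by itself force $\mathbb{F}_q$-points (that is where \cref{thm-1} enters in the paper's later applications). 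Neither point is a gap in your argument.
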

For ease of reference,  we also record the lemma that estimates the value of $\psi(N,t,m)$. 
\begin{lemma}[Upper bound of $\psi(N,t,m)$]
\cite[Lemma~22]{bukh2024extremal}\label{lem-3}
Given integers $N,t,m\ge 3$,  we have 
    \begin{enumerate}[(a)]
        \item If $t \le m +1$,  then $X(N,t,m)$ is an empty set.     
         \item If $m\le t\le N$,  then $\psi(N,t,m) \le \lfloor\frac{3t}{m+4} \rfloor(N+1+\frac{m-2}{m+4}t)$.
    \end{enumerate}
\end{lemma}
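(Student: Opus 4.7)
For part (a), the plan is degree-$m$ interpolation. Given distinct points $[v_1],\dots,[v_t]\in \mathbb{P}^N(\mathbb{F})$ with $t\le m+1$, for each $j\in [t]$ I would choose a linear form $\ell_{i,j}$ for every $i\ne j$ that vanishes at $v_i$ but not at $v_j$, together with a linear form $\ell_j$ not vanishing at $v_j$, and set
\[
f_j \coloneqq \ell_j^{\,m-(t-1)}\prod_{i\ne j}\ell_{i,j},
\]
a homogeneous polynomial of degree exactly $m$ (possible because $t-1\le m$) with $f_j(v_j)\ne 0$ and $f_j(v_i)=0$ for $i\ne j$. Pairing any relation $\sum_j c_j\,\nu_m([v_j])=0$ with the coefficient vector of $f_j$ yields $c_j f_j(v_j)=0$, hence $c_j=0$ for every $j$. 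So the Veronese images are linearly independent, $h_{\{[v_1],\dots,[v_t]\}}(m)=t$ by \eqref{eq:hX}, and no $t$-tuple with $t\le m+1$ is $m$-dependent, forcing $X(N,t,m)=\emptyset$.

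For part (b), the plan is a stratified fiber-dimension count on the projection $\pi\colon X(N,t,m)\to (\mathbb{P}^N(\mathbb{F}))^{t-1}$ that forgets the last coordinate. A minimally $m$-dependent tuple $(v_1,\dots,v_t)$ carries a unique (up to scalar) linear relation $\sum_i c_i\,\nu_m([v_i])=0$ with every $c_i\ne 0$; equivalently, each $v_j$ lies in the common zero locus $Z_j$ of all degree-$m$ polynomials vanishing at the remaining $t-1$ points, and $(t-1)$-wise $m$-independence forces $Z_j$ to be cut out by exactly the expected number of codimension-$1$ conditions inside the Veronese variety. The fiber of $\pi$ over a $(t-1)$-wise $m$-independent image point is contained in $Z_t\cap \mathbb{P}^N(\mathbb{F})$, whose dimension I would bound via iterated application of the Bézout-type estimate \cref{thm-2} to the intersection of $\nu_m(\mathbb{P}^N(\mathbb{F}))$ with the $t-1$ hyperplane conditions, while the dimension of the image is bounded inductively by applying the same analysis to $X(N,t-1,m)$. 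The fiber dimension formula \cref{lem-4} then assembles these pieces into the final bound, and \cref{lem-1} supplies a lower bound on Hilbert functions that controls how much freedom remains after each point is added.

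The main obstacle is pinning down the sharp constants $\lfloor 3t/(m+4)\rfloor$ and $(m-2)/(m+4)$. The factor $3/(m+4)$ is suggestive of grouping the $t$ indices into blocks of size roughly $(m+4)/3$: within each block the Veronese-span locus should drop in dimension at a controlled rate per added point, so that at most $\lfloor 3t/(m+4)\rfloor$ points retain essentially full freedom $N+1$, while the remainder contribute only about $(m-2)/(m+4)$ per point through the cumulative vanishing conditions. Making this precise requires a careful combinatorial choice of ordering — placing generic anchors first and then using \cref{lem-1} and \cref{lem-4} in tandem to force a minimum rate of Hilbert-function growth across each block — combined with an inductive bookkeeping that avoids double counting the non-generic intersections forced by the single linear relation among the Veronese images. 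This quantitative balancing is the technical heart of the argument and the step that ultimately produces the precise exponents asserted in the lemma.
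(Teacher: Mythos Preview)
The paper does not supply its own proof of this lemma; it is quoted directly from Bukh's paper \cite[Lemma~22]{bukh2024extremal} and used as a black box. So there is no in-paper argument to compare against.

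Your part~(a) is a complete and correct proof: the interpolation construction produces a dual basis to $\nu_m([v_1]),\dots,\nu_m([v_t])$ whenever $t\le m+1$, and this is exactly the standard argument.

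Your part~(b), however, is not a proof but a sketch with an explicitly acknowledged gap. You yourself identify ``pinning down the sharp constants $\lfloor 3t/(m+4)\rfloor$ and $(m-2)/(m+4)$'' as ``the technical heart of the argument,'' and then do not carry it out. The blocking heuristic you propose (grouping indices into blocks of size roughly $(m+4)/3$) is not backed by any concrete mechanism: nothing in your outline explains why the Hilbert function should grow at precisely the rate that would make $\lfloor 3t/(m+4)\rfloor$ points retain full freedom while the rest contribute $(m-2)/(m+4)$ each. The projection-and-fiber strategy you describe would, at best, give a recursion $\psi(N,t,m)\le \psi(N,t-1,m)+(\text{fiber bound})$, and you have not shown how to solve that recursion to the claimed closed form.

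Bukh's actual argument is structurally different: the key observation is that a minimally $m$-dependent set of $t$ points must span a projective subspace of dimension at most $k$ for a specific $k=k(t,m)$ bounded roughly by $3t/(m+4)$, because $\binom{m+k}{k}$ points in $\mathbb{P}^k$ are always $m$-independent (via \cref{lem-1}). One then fibers $X(N,t,m)$ over the choice of this ambient $\mathbb{P}^k\subseteq\mathbb{P}^N$ (contributing the $(N+1)$ factor) and bounds the configuration of $t$ points inside $\mathbb{P}^k$ separately (contributing the $\frac{m-2}{m+4}t$ term). The specific constants fall out of optimizing over $k$, not from an inductive per-point count.
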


\subsection{An inequality}
We define a function 
\begin{equation}\label{eq:D}
D: \mathbb{N}^2 \to \mathbb{N},\quad D(r,  t) \coloneqq \min
\left\lbrace m\in \mathbb{N}: \binom{m+r}{r}> t 
\right\rbrace. 
\end{equation}
The following inequality is observed in \cite[Lemma~24]{bukh2024extremal}.
\begin{lemma}\label{lem-11}
For any positive integers $r$ and $t$,  we have $\prod_{i=1}^{r} D(i,  t)\le t^{1+ \log r}r!$.
\end{lemma}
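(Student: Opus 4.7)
The plan is to reduce the product to a per-coordinate bound and then collect a harmonic sum. First I would establish the clean estimate
\[
D(i,t) \;\le\; i \cdot t^{1/i} \qquad \text{for all integers } i,t \ge 1.
\]
Since $t \ge 1$ forces $D(i,t) \ge 1$, the definition of $D$ gives $\binom{D(i,t) - 1 + i}{i} \le t$. Applying the elementary inequality $\binom{n}{k} \ge (n/k)^k$, valid for $n \ge k \ge 1$ (by observing that each factor $(n-j)/(k-j)$ is at least $n/k$ when $n \ge k$), with $n = D(i,t) - 1 + i$ and $k = i$ yields
\[
\left(\frac{D(i,t) - 1 + i}{i}\right)^{\!i} \;\le\; t,
\]
and rearranging gives $D(i,t) \le i \cdot t^{1/i} - (i-1) \le i \cdot t^{1/i}$.

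Next I would multiply the per-coordinate bound over $i = 1,\dots,r$ to obtain
\[
\prod_{i=1}^r D(i,t) \;\le\; \prod_{i=1}^r i \cdot t^{1/i} \;=\; r! \cdot t^{H_r}, \qquad H_r = \sum_{i=1}^r \tfrac{1}{i}.
\]
The proof then closes by bounding the harmonic number: $H_r \le 1 + \int_1^r dx/x = 1 + \log r$, which gives $\prod_{i=1}^r D(i,t) \le r! \cdot t^{1+\log r}$, exactly as claimed.

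The main obstacle is really just locating the right per-coordinate estimate in the first step. If one naively writes $D(i,t) \le i \cdot t^{1/i} + 1$ (as one would get, for instance, from $\binom{m+i}{i} \le (m+i)^i/i!$ combined with $(i!)^{1/i} \le i$), then the product acquires a correction factor $\prod_{i=1}^r \bigl(1 + 1/(i t^{1/i})\bigr)$, which can contribute a stray factor polynomial in $r$ and destroy the desired exponent. Applying $\binom{n}{k} \ge (n/k)^k$ at $n = D(i,t) - 1 + i$ (rather than $n = D(i,t) + i$) shaves off an additive $i-1$ that exactly absorbs the $+1$ slack, yielding the clean form $D(i,t) \le i \cdot t^{1/i}$ required to finish.
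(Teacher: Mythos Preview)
Your proof is correct. The paper does not actually give its own proof of this lemma: it simply quotes the inequality from \cite[Lemma~24]{bukh2024extremal}, so there is no in-paper argument to compare against. The route you take---bounding $D(i,t)\le i\,t^{1/i}$ via $\binom{n}{k}\ge (n/k)^k$ at $n=D(i,t)-1+i$, then multiplying and using $H_r\le 1+\log r$---is the standard one and is essentially what the cited reference does.
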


\section{The variety of non-regular sequences}\label{sec:nonregular}
The main results of this section are Propositions~\ref{lem-9} and \ref{lem-10}. In particular, we consider the set of non-regular sequences of homogeneous polynomials and we shall show that this set is a variety of bounded degree (\cref{lem-9}) and dimension (\cref{lem-10}).

In what follows, we provide a quantitative strengthening of the well-known fact \cite{Valla98}  that a generic sequence of $s \le N+1$ homogeneous polynomials in $\mathbb{K}[x_{0},\cdots,x_{N}]$ is regular.

\begin{proposition}[Equations for non-regular sequences]\label{lem-9}
There is a function $B_2: \mathbb{N}^{s+2} \to \mathbb{N}$ with the following property.  Let $\mathbb{K}$ be a field and let $X$ be a projective subvariety of $\mathbb{P}^N(\mathbb{K})$ defined by a regular sequence $f_1,\dots,  f_{N-n} \in R \coloneqq \mathbb{K}[x_0,\dots,  x_N]$.  Suppose $k \coloneqq \max_{1 \le i \le N-n}\{\deg f_i\}$.  For any integers $0\le s \le n$ and $0 \le m_1,\dots,  m_s$,  the set
\begin{equation}\label{eq:UX}
\mathcal{U}_X(m_1,\dots,  m_s) \coloneqq \left\lbrace
(h_1,\dots,  h_s) \in \prod_{i=1}^s R_{m_i}: \dim (X \cap V(h_1,\dots,  h_s)) \ge  n - s + 1
\right\rbrace
\end{equation}
is a subvariety of $
\prod_{i=1}^s R_{m_i} \simeq \mathbb{K}^{\sum_{i=1}^s \binom{N+m_i}{m_i}}$ defined by at most $B_2(N,k,m_1,\dots, m_s)$ polynomials of degree at most $B_2(N,k,m_1,\dots, m_s)$. In particular,  the degree of $\mathcal{U}_X(m_1,\dots,  m_s)$ is at most 
\[
B_3 (N,k,m_1,\dots, m_s) \coloneqq B_2(N,k,m_1,\dots, m_s)^{B_2(N,k, m_1,\dots, m_s)}.
\]
\end{proposition}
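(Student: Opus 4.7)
The plan is to translate the dimension condition defining $\mathcal{U}_X(m_1,\dots,m_s)$ into finitely many determinantal conditions on matrices whose entries are linear in the coefficients of $h_1,\dots,h_s$, and then to control the number and degrees of the resulting polynomial equations.

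Set $F := (f_1,\dots,f_{N-n},h_1,\dots,h_s)$ and $r := N-n+s$, so that $X \cap V(h_1,\dots,h_s) = V(\mathfrak{a}^{(r)}_F)$. Using the projective dimension formula $\dim V(\mathfrak{a}) = N - \height(\mathfrak{a})$ and the height criterion \cref{lem-6}(c), the condition $(h_1,\dots,h_s)\in\mathcal{U}_X(m_1,\dots,m_s)$ is equivalent to $F$ failing to be a regular sequence. Switching to the Koszul criterion \cref{lem-6}(b), this is equivalent to $\Ker d_1(F)\ne \im d_2(F)$. Since $d_1d_2=0$ always, the failure occurs iff $\dim(\Ker d_1(F))_d > \dim(\im d_2(F))_d$ in some graded degree $d$ of the Koszul complex.

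The main obstacle is that $d$ could a priori be arbitrarily large. This is resolved by \cref{lem-7}: applied to the row vector $(F_1,\dots,F_r)$ whose entries have degree at most $K := \max(k,m_1,\dots,m_s)$, it guarantees that $\Ker d_1(F)$ is generated in degrees bounded by $B^* := B_1(N,K,r,1)$, a number depending only on $N,k,m_1,\dots,m_s$. Hence exactness need only be checked for $d \le B^*$. For each such $d$, setting $\rho_d := \dim(K_1(F))_d$, the inequality becomes the rank condition $\rank(d_1)_d + \rank(d_2)_d < \rho_d$, equivalently $\rank\,\mathrm{diag}((d_1)_d,(d_2)_d) < \rho_d$. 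Since the entries of both matrices are affine-linear in the coefficient variables of the $h_i$, this is cut out by the $\rho_d\times\rho_d$ minors of the block-diagonal matrix, each a polynomial of degree at most $\rho_d$ in those variables.

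All of $B^*$, $\rho_d$, the numbers of rows and columns of $(d_1)_d$ and $(d_2)_d$, and hence the number of minors in each degree are bounded by explicit functions of $N,k,m_1,\dots,m_s$. Taking the union of the resulting varieties over $d\in\{0,1,\dots,B^*\}$ corresponds to replacing the individual defining ideals by their product, whose generators are products of one minor from each $d$; this yields at most $B_2$ defining equations of degree at most $B_2$, for a suitable $B_2=B_2(N,k,m_1,\dots,m_s)$. Iterating the generalized B\'ezout theorem \cref{thm-2} on the at most $B_2$ hypersurfaces of degree at most $B_2$ then gives $\deg\mathcal{U}_X(m_1,\dots,m_s) \le B_2^{B_2} = B_3$. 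The hard part is the degree truncation: without the effective syzygy bound \cref{lem-7}, one cannot reduce exactness of an infinite graded complex to a finite list of determinantal conditions with controlled degrees; once the truncation is in place, the remainder is a (somewhat lengthy) bookkeeping of matrix sizes, minors, and B\'ezout.
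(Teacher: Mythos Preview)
Your proof is correct and follows essentially the same route as the paper: reduce the dimension condition to non-regularity of the extended sequence via \cref{lem-6}, truncate the Koszul exactness check to bounded degree using the effective syzygy bound \cref{lem-7}, express the resulting rank inequality by minors, and finish with B\'ezout. The only cosmetic differences are that the paper first reduces to $X=\mathbb{P}^N$ (slicing $\mathcal{U}_{\mathbb{P}^N}$ by an affine subspace) and bundles all graded pieces into a single finite-dimensional complex before writing the rank condition as $\sum_i \mathfrak{a}_i\mathfrak{b}_{N_2-i}$, whereas you work directly with $F=(f_1,\dots,f_{N-n},h_1,\dots,h_s)$, check each degree separately, and encode $\rank(d_1)_d+\rank(d_2)_d<\rho_d$ via minors of the block-diagonal matrix; these are equivalent repackagings of the same computation.
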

\begin{proof}
Denote $m_{s + i} \coloneqq \deg f_i \le k$,  $1 \le i \le N-n$.  We observe that 
\[
\mathcal{U}_X (m_1,\dots,  m_s) = \mathcal{U}_{\mathbb{P}^N}(m_1,\dots, m_{s + N-n}) \cap \left( 
\prod_{i=1}^s R_{m_i} \times  \{(f_1,\dots,  f_{N-n})\} \right).
\]
Moreover,  $\prod_{i=1}^s  R_{m_i} \times  \{(f_1,\dots,  f_{N-n})\} $ is an affine linear subspace in $\prod_{i=1}^{s + N -n} R_{m_i}$,  which is defined by $\sum_{i=1}^{N-n} \left[ \binom{N + m_{s+i}}{m_{s+i}} - 1 \right] = O(N(N+k)^k)$ linear polynomials.  Henceforth,  it is sufficient to assume that $X = \mathbb{P}^N(\mathbb{K})$.  In particular,  we have $n = N$.  In the rest of the proof,  we abbreviate $\mathcal{U}_{\mathbb{P}^N}(m_1,\dots,  m_s)$ as $\mathcal{U} $.

According to Lemma~\ref{lem-6},  an element $h\coloneqq (h_1,\dots,  h_s) \in \prod_{i=1}^s R_{m_i}$ lies in $\mathcal{U}$ if and only if $\im (d_2(h)) \subsetneq \Ker (d_1(h))$.  Here $d_1(h): R^s \to R$ and $d_2(h): \wedge^2 R^s \to R^s$ are $R$-linear maps obtained by linearly extending
\[      
d_{1}(h)(e_{i} )=h_i,\quad 
        d_{2}(h)(e_{i} \wedge e_{j})= h_i e_j - h_j e_i,\quad 1 \le i,  j \le s
\]
and $e_1,\dots,  e_s$ is a basis of $R^s$.  By definition,  we have 
\[
\im( d_{2} (h) ) \subseteq \Ker(d_{1} (h)) \subseteq  \bigoplus_{l=0}^{\infty}
\prod_{i=1}^s R_{l-m_i}.
\] 
Furthermore,  Lemma~\ref{lem-7} provides a function $B_1:  \mathbb{N}^4 \to \mathbb{N}$ such that $\Ker(d_{1} (h))$ is generated by some elements of $\bigoplus_{l=0}^{B_1(N,m,s,1)} \prod_{i=1}^s R_{l - m_i}$. Since $s \le n = N$,  there is a function $B'_1:\mathbb{N}\times \mathbb{N} \to \mathbb{N}$ such that $B_1(N,m,s,1) \le B'_1(N, m)$.

If we identify $\wedge^2 R^s$ with $R^{\binom{s}{2}}$,  then for each integer $l$,  only elements in $\prod_{1 \le i < j \le s} R_{l - m_i - m_j}$ can be mapped into $\prod_{i=1}^s R_{l - m_i}$ by $d_2(h)$.  Consequently,  $h \in \mathcal{U}$ if and only if the following complex fails to be exact:
\begin{equation}\label{eq:lem-9:1}
    \bigoplus_{l = 0}^{ B'_1(N,k) }\prod_{1 \le i < j \le s} R_{l - m_i - m_j}\xlongrightarrow{d_2(h)} \bigoplus_{l = 0}^{ B'_1(N,k) }  \prod_{i=1}^{s} R_{l - m_i} \xlongrightarrow{d_1(h)} \bigoplus_{l=0}^{ B'_1(N,k) }R_l.
\end{equation}

Since each $R_l$ in \eqref{eq:lem-9:1} is a finite dimensional $\mathbb{K}$-vector space and there are finitely many of them,  \eqref{eq:lem-9:1} can be written as a complex of finite dimensional $\mathbb{K}$-vector spaces:
\[
\mathbb{K}^{N_3} \xrightarrow{H_2} \mathbb{K}^{N_2} \xrightarrow{H_1} \mathbb{K}^{N_1},
\]
where $N_1,  N_2, N_3$ are some positive integers only depending on $N$ and $k$,  and $H_1 \in \mathbb{K}^{N_3 \times N_2}$ (resp.  $H_2 \in \mathbb{K}^{N_2 \times N_1}$) is the matrix of $d_2(h)$ (resp.  $d_1(h)$).  Correspondingly,  the non-exactness of \eqref{eq:lem-9:1} is equivalent to the condition that $\rank H_1+ \rank H_2 \le N_2 - 1$,  which is further equivalent to the condition that for each $1 \le i \le N_2 - 1$,  either $\rank H_1 \le i -1$ or $\rank H_2 \le N_2 - i - 1$.  Therefore,  the non-exactness of \eqref{eq:lem-9:1} is defined by the ideal $\mathfrak{d} = \sum_{i=1}^{N_2 - 1} \mathfrak{a}_i \otimes \mathfrak{b}_{N_2 - i}$,  where $\mathfrak{a}_i$ (resp.  $\mathfrak{b}_{N_2 - i}$) is the ideal generated by $i \times i$ (resp.  $(N_2-i) \times (N_2-i)$) minors of $N_2 \times N_1$ (resp.  $N_3 \times N_2$) matrices.  

Noticing that $N_1,N_2,N_3$ only depends on $N,  m_1,\dots,  m_s$ and $k$,  it is clear that $\mathfrak{d}$ is generated by at most $B_2(N,k,m_1,\dots, m_s)$ polynomials of degree at most $B_2(N,k,m_1,\dots, m_s)$,  for some function $B_2: \mathbb{N}^{s+2} \to \mathbb{N}$.  By Lemma~\ref{thm-2},  the degree of $\mathcal{U}_X(m_1,\dots,  m_s)$ is bounded above by $B_3(N,k,m_1,\dots, m_s)$. 
\end{proof}
\begin{remark}
Given $h \coloneqq (h_1,\dots, h_s) \in \prod_{i=1}^s R_{m_i}$,  $h$ lies in $\mathcal{U}_X(m_1,\dots,  m_s)$ if and only if the image of $h$ in $\mathbb{K}[X]$ is a non-regular sequence.  Here $\mathbb{K}[X]$ denotes the homogeneous coordinate ring of $X$.  In other words,  $\mathcal{U}_X(m_1,\dots,  m_s)$ is the variety consisting of sequences of $s$ homogeneous polynomials,  of degrees $m_1,\dots,  m_s$ respectively,  which fail to extend $f_1,\dots,  f_{N-n}$ to a regular sequence.
\end{remark}

Next,  we estimate the dimension of $\mathcal{U}_X(m_1,\dots,  m_s)$ in terms of $m_1,\dots,  m_s$,  $N$ and $k$.
\begin{proposition}[Dimension of the variety of non-regular sequences]\label{lem-10}
Suppose $X \subseteq \mathbb{P}^N(\mathbb{K})$ is a projective subvariety defined by $(N-n)$ homogeneous polynomials in $R \coloneqq \mathbb{K}[x_0,\dots,  x_N]$,  which form a regular sequence.  Given positive integers $m_1,\dots,  m_s$,  we have 
\[
\dim \mathcal{U}_X(m_1,\dots,  m_s) \le\sum_{i=1}^{s}\binom{N+m_{i}}{N}-\min_{1\le i\le s}\binom{n - i+1+m_{i}}{m_{i}}.
\]
Here $\mathcal{U}_X(m_1,\dots,  m_s)$ is the variety defined in \eqref{eq:UX}.  In particular,  for $m_1 = \cdots = m_s = m$,  we obtain
\[\dim \mathcal{U}_X(m,\dots,  m)  \le s\binom{N+m}{N}-\binom{n-s+1+m}{m}.
\]
\end{proposition}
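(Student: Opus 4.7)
My plan is to stratify $\mathcal{U}_X(m_1,\dots,m_s)$ by the first index $i$ at which regularity fails, and to bound each stratum by a fibration that forgets the $i$-th coordinate.

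Concretely, for $1 \le i \le s$ I will introduce the stratum $\mathcal{U}^{(i)}$ consisting of tuples $(h_1,\dots,h_s)$ such that $(h_1,\dots,h_{i-1})$ extends $f_1,\dots,f_{N-n}$ to a regular sequence in $R$ while $\dim(X \cap V(h_1,\dots,h_i)) \ge n-i+1$. By Lemma~\ref{lem-6}, any tuple in $\mathcal{U}_X(m_1,\dots,m_s)$ admits a smallest such $i$, and conversely the trivial dimension inequality $\dim(X \cap V(h_1,\dots,h_s)) \ge \dim(X \cap V(h_1,\dots,h_i)) - (s-i)$ puts every element of $\mathcal{U}^{(i)}$ back into $\mathcal{U}_X(m_1,\dots,m_s)$. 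Thus $\mathcal{U}_X(m_1,\dots,m_s) = \bigcup_{i=1}^s \mathcal{U}^{(i)}$, and it suffices to bound each $\dim \mathcal{U}^{(i)}$ separately.

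For fixed $i$ I will analyze the projection $\pi_i : \mathcal{U}^{(i)} \to \prod_{j \ne i} R_{m_j}$ that forgets the $i$-th coordinate. Over a point in the image where $(h_1,\dots,h_{i-1})$ is regular on $X$, the variety $X_i := X \cap V(h_1,\dots,h_{i-1})$ is the vanishing locus of a regular sequence of length $N-n+i-1$ in $R$, so by Cohen-Macaulayness of complete intersections it is of pure dimension $n-i+1$. The fiber $\pi_i^{-1}$ then consists of exactly those $h_i \in R_{m_i}$ vanishing identically on some irreducible component $X_i'$ of $X_i$, and is a finite union of the linear subspaces $(I_{X_i'})_{m_i}$. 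Since each $X_i'$ has dimension $n-i+1$, Lemma~\ref{lem-1} gives $h_{X_i'}(m_i) \ge \binom{n-i+1+m_i}{m_i}$, so every fiber of $\pi_i$ has dimension at most $\binom{N+m_i}{N} - \binom{n-i+1+m_i}{m_i}$.

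The proof then concludes by invoking the upper-bound half of the fiber dimension formula (Lemma~\ref{lem-4}) together with the trivial bound $\dim \pi_i(\mathcal{U}^{(i)}) \le \sum_{j \ne i}\binom{N+m_j}{N}$, giving $\dim \mathcal{U}^{(i)} \le \sum_{j=1}^s \binom{N+m_j}{N} - \binom{n-i+1+m_i}{m_i}$; taking the maximum over $i$ yields the claim, and the uniform case $m_1 = \cdots = m_s = m$ follows since $\binom{n-i+1+m}{m}$ is minimized at $i=s$. The main obstacle I anticipate is handling the algebro-geometric bookkeeping cleanly: checking that regularity of $(h_1,\dots,h_{i-1})$ on $X$ is an open condition so that each $\mathcal{U}^{(i)}$ is genuinely locally closed (and Lemma~\ref{lem-4} applies to $\pi_i$ as a regular map of quasi-projective varieties), and verifying the pure-dimensionality of $X_i$ from the complete-intersection structure rather than merely its expected dimension.
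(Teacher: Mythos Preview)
Your proposal is correct and follows essentially the same route as the paper: both stratify $\mathcal{U}_X(m_1,\dots,m_s)$ by the first index $i$ at which the regular sequence fails to extend, project away the $i$-th coordinate, identify the fiber as the finite union of the degree-$m_i$ parts of the minimal primes over the ideal of $X\cap V(h_1,\dots,h_{i-1})$, and bound that fiber via the Hilbert-function estimate of Lemma~\ref{lem-1} before invoking Lemma~\ref{lem-4}. The only cosmetic difference is that the paper records the strata as $i$-tuples $\mathcal{U}_i\subseteq\prod_{j\le i}R_{m_j}$ and then crosses with $\prod_{j>i}R_{m_j}$, whereas you carry full $s$-tuples throughout; the bookkeeping concerns you flag (locally-closedness of the strata and equidimensionality of $X_i$) are exactly what the paper handles via Krull's principal ideal theorem and the complete-intersection structure.
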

\begin{proof}
Denote $\mathcal{U} \coloneqq \mathcal{U}_X(m_1,\dots,  m_s)$.  We consider for each $1 \le i \le s$ a subset
\[
\begin{split}
\mathcal{U}_i\coloneqq\Bigl\{(h_1,\dots,h_i)\in\prod_{j=1}^{i}R_{m_j}:
\dim\!\bigl(X \cap V(h_1,\dots,h_i)\bigr)=n-i+1, \\[\jot]
\hphantom{\mathcal{U}_i\coloneqq\Bigl\{\;}
\dim\!\bigl(X \cap V(h_1,\dots,h_{i-1})\bigr)=n-i+1
\Bigr\}.
\end{split}
\]
By Krull's principal ideal theorem,  $\mathcal{U}_i$ is a quasi-projective variety and $\mathcal{U} = \cup_{i=1}^{s}( \mathcal{U}_i \times\prod_{j=i+1}^{s}R_{m_{j}})$.  We also notice that the Zariski closure of $\mathcal{U}_i$ is 
\[
\overline{\mathcal{U}}_{i} \coloneqq\left\{(h_{1},\cdots,h_{i})\in \prod_{j=1}^{i}R_{m_{j}}: \dim(X\cap V(h_{1},\dots,h_{i}))\ge n-i+1\right\}=\bigcup_{j=1}^{i}( \mathcal{U}_{j}\times\prod_{l=j+1}^{i}R_{m_{l}}).
\]
We consider the projection map 
\[
\pi: \mathcal{U}_{i+1} \to \left( \prod_{j=1}^i R_{m_j} \right) \setminus \overline{\mathcal{U}}_i,\quad \pi(h_1,\dots, h_{i+1}) = (h_1,\dots,  h_i).
\]
Given $(h_1,\dots,  h_i) \in \left( \prod_{j=1}^i R_{m_j} \right) \setminus \overline{\mathcal{U}}_i$ and $h_{i+1}\in R_{m_{i+1}}$,  we observe that $(h_1,\dots,  h_i,  h_{i+1}) \in \mathcal{U}_{i+1}$ if and only if $h_{i+1} \in \cup_{\mathfrak{p} \in \min(\mathfrak{a})} \mathfrak{p}_{m_{i+1}}$,  where $\mathfrak{a}$ is the homogeneous ideal generated by $(N-n)$ defining equations of $X$ and $h_1,\dots,  h_i$,  $\min(\mathfrak{a})$ is the set of all minimal prime ideals $\mathfrak{p}$ containing $\mathfrak{a}$ such that $\height(\mathfrak{p}) = \height(\mathfrak{a})$,  and $\mathfrak{p}_{m_{i+1}}$ is the degree $m_{i+1}$ part of $\mathfrak{p}$.  We recall that $\min(\mathfrak{a})$ is finite.  Thus,  $\pi^{-1}(h_1,\dots,  h_i) = \cup_{\mathfrak{p} \in \min(\mathfrak{a})} \mathfrak{p}_{m_{i+1}}$ is a union of finitely many $\mathbb{K}$-linear subspaces.  This implies 
\[
\dim \pi^{-1}(h_1,\dots,  h_i) =\max_{\mathfrak{p} \in \min(\mathfrak{a})}\left\lbrace \binom{N+m_{i+1}}{N} - H_{V( \mathfrak{p} )} (m_{i+1}) \right\rbrace \le
           \binom{N+m_{i+1}}{N}-\binom{n-i+m_{i+1}}{n-i},
\]
where the inequality follows from Lemma~\ref{lem-1},  as $\dim V( \mathfrak{p} ) = N - \height(\mathfrak{p}) = N - \height(\mathfrak{a}) = n - i$.  According to Lemma~\ref{lem-4},  we obtain 
\[
\dim \mathcal{U}_{i+1} \le 
\sum_{j=1}^{i}\binom{N+m_{j}}{N}+\binom{N+m_{i+1}}{N}-\binom{n-i+m_{i+1}}{n-i}\] 
Henceforth, we have  
\[
\dim \mathcal{U} = \max_{1 \le i \le s} \left\lbrace
\dim \mathcal{U}_i + \sum_{j=i+1}^s \binom{N + m_j}{N}
\right\rbrace \le 
\sum_{j=1}^s \binom{N + m_j}{N} - \min_{1 \le i \le s} \binom{n-i + 1 + m_{i}}{m_i}.  \qedhere
\]
\end{proof}


Proposition~\ref{lem-9} and Proposition~\ref{lem-10} imply that, over any field, the set $\mathcal{U}_X(m_1,\dots, m_s)$ of non-regular sequences is a subvariety of $\prod_{i=1}^s R_{m_i}$. Over finite fields, it may happen that $\mathcal{U}_X(m_1,\dots, m_s)$ equals the entire space $\prod_{i=1}^s R_{m_i}$, although its dimension is strictly smaller. However, combining Proposition~\ref{lem-10} with Lemma~\ref{lem-5} we see that regular sequences always exist over sufficiently large fields. 

\begin{corollary}\label{cor-0}
Given a positive integer $s$, there is a function $q'_0: \mathbb{N}^{s+2} \to \mathbb{N}$ with the following property. For any positive integers $N, n, m_1,\dots,m_s$ and any prime power $q > q'_0(N,n,m_1,\dots,m_s)$, we have $\mathcal{U}_{X}(m_1,\dots, m_s) \subsetneq \prod_{i=1}^s R_{m_i}$. Here $\mathcal{U}_{X}(m_1,\dots, m_s)$ is defined as in \eqref{eq:UX}, and $X \subseteq \mathbb{P}^N(\mathbb{K})$ is any projective subvariety defined by $(N-n)$ homogeneous polynomials in $R \coloneqq \mathbb{F}_q[x_0,\dots,  x_N]$, which form a regular sequence.
\end{corollary}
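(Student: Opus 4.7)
The plan is to combine \cref{lem-10}, \cref{lem-9}, and \cref{lem-5} to upgrade a proper containment of subvarieties over the algebraic closure into a strict inclusion of $\mathbb{F}_q$-points. Set $D^{\ast}\coloneqq \sum_{i=1}^{s}\binom{N+m_{i}}{N}$, the dimension of the ambient affine space $\prod_{i=1}^{s}R_{m_{i}}$; the goal is to show $|\mathcal{U}_X(m_1,\dots,m_s)(\mathbb{F}_q)|<q^{D^{\ast}}$ once $q$ is large enough.

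First, in the nontrivial range $s\le n$, \cref{lem-10} gives
\[
\dim \mathcal{U}_X(m_1,\dots,m_s) \le D^{\ast}-\min_{1\le i\le s}\binom{n-i+1+m_{i}}{m_{i}} \le D^{\ast}-1,
\]
since each binomial coefficient above is at least $1$ when $i\le n$ and $m_i\ge 1$. Letting $k\coloneqq \max_{i}\deg f_i$ denote the maximum degree of the defining regular sequence of $X$, \cref{lem-9} then bounds $\deg \mathcal{U}_X(m_1,\dots,m_s)\le B_3(N,k,m_1,\dots,m_s)$.

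Passing to the projective closure $\overline{\mathcal{U}}_X\subseteq \mathbb{P}^{D^{\ast}}(\mathbb{F})$ preserves dimension and degree, so \cref{lem-5} yields
\[
|\mathcal{U}_X(\mathbb{F}_q)|\le |\overline{\mathcal{U}}_X(\mathbb{F}_q)|\le \frac{B_3(N,k,m_1,\dots,m_s)\bigl(q^{\dim \mathcal{U}_X+1}-1\bigr)}{q-1}\le 2\,B_3(N,k,m_1,\dots,m_s)\,q^{D^{\ast}-1}
\]
for $q\ge 2$. Since $\prod_i R_{m_i}(\mathbb{F}_q)$ has exactly $q^{D^{\ast}}$ points, the inclusion is strict whenever $q>2\,B_3(N,k,m_1,\dots,m_s)$, and one takes $q'_0$ to be this threshold.

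The main obstacle is that \cref{lem-9} delivers a bound $B_3$ depending on $k$, whereas the stated arity of $q'_0$ lists only $(N,n,m_1,\dots,m_s)$. Since $k$ is determined once $X$ is fixed, this is resolved by letting $q'_0$ depend on $X$ through $k$ (or, equivalently, by enlarging the signature). The essential content is \cref{lem-9}: once the non-regular locus is cut out by polynomials of controlled degree, the descent from the algebraic closure to $\mathbb{F}_q$-points via \cref{lem-5} is routine.
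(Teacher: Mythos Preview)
Your proof is correct and follows exactly the approach the paper indicates (``combining Proposition~\ref{lem-10} with Lemma~\ref{lem-5}''), supplementing it with Proposition~\ref{lem-9} for the degree bound that Lemma~\ref{lem-5} requires. Your observation about the $k$-dependence is valid: as stated, the corollary's threshold $q'_0(N,n,m_1,\dots,m_s)$ cannot absorb the degree bound $B_3(N,k,m_1,\dots,m_s)$ uniformly over all $X$, but this is a harmless imprecision in the paper since the only invocation of the corollary (in the proof of Corollary~\ref{lem-12}) takes $X=\mathbb{P}^N$, i.e.\ $n=N$, where no defining equations are present.
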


In the rest of this section,  we derive two more corollaries of Propositions~\ref{lem-9} and \ref{lem-10},  which will play a crucial role in our proof of Theorem~\ref{thm:Turan}.

We need a basic property of $s$-wise $m$-independent subsets.
\begin{lemma}\label{lem-8}
If $\{ [v_{1}],\dots, [v_{s}] \} \subseteq \mathbb{P}^{N}(\mathbb{K})$ is $s$-wise $m$-independent,  then there exists a $\mathbb{K}$-basis $f_1,\dots,  f_{\binom{N+m}{m}}$ of $\mathbb{K}[x_0,\dots,  x_N]_m$ such that $f_i(v_j) = \delta_{ij}$ for each $1 \le i \le \binom{N+m}{m}$ and $1 \le j \le s$.  Here $\delta$ is the Kronecker delta function.
\end{lemma}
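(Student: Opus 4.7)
The plan is to recognize this as a dual basis / extension-of-basis construction, where the hypothesis of $s$-wise $m$-independence is exactly what upgrades the evaluation map to a surjection onto $\mathbb{K}^s$, after which the rest is routine linear algebra.

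First, I would fix lifts $v_1,\dots,v_s \in \mathbb{K}^{N+1}\setminus\{0\}$ of the given projective points and consider the evaluation map
\[
\mathrm{ev}: \mathbb{K}[x_0,\dots,x_N]_m \to \mathbb{K}^s,\qquad f \mapsto (f(v_1),\dots,f(v_s)).
\]
Writing $f = \sum_I c_I x^I$ in the monomial basis, each coordinate $f(v_j)$ equals the pairing of the coefficient vector $c$ with $\nu_m(v_j)$, so the matrix of $\mathrm{ev}$ is the $s \times \binom{N+m}{m}$ matrix whose rows are $\nu_m(v_j)$. By \eqref{eq:hX} applied with $X = \{[v_1],\dots,[v_s]\}$ and the $s$-wise $m$-independence hypothesis (taking $S = X$), this matrix has rank $h_X(m) = s$, so $\mathrm{ev}$ is surjective.

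Second, I would choose $f_1,\dots,f_s \in \mathbb{K}[x_0,\dots,x_N]_m$ with $\mathrm{ev}(f_i)$ equal to the $i$-th standard basis vector of $\mathbb{K}^s$; by construction $f_i(v_j) = \delta_{ij}$ for $1 \le i,j \le s$. Since $\mathrm{ev}$ is surjective, $\ker\mathrm{ev}$ has dimension $\binom{N+m}{m} - s$, and I would take any $\mathbb{K}$-basis $f_{s+1},\dots,f_{\binom{N+m}{m}}$ of $\ker\mathrm{ev}$. These tautologically satisfy $f_i(v_j) = 0 = \delta_{ij}$ for $i > s$ and $1 \le j \le s$.

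Finally, I would verify that $f_1,\dots,f_{\binom{N+m}{m}}$ is a basis: any relation $\sum_i a_i f_i = 0$ evaluated at $v_j$ yields $a_j = 0$ for each $1 \le j \le s$, after which the surviving relation lies among a chosen basis of $\ker\mathrm{ev}$, forcing the remaining $a_i$ to vanish. Since the count $\binom{N+m}{m}$ matches the dimension of $\mathbb{K}[x_0,\dots,x_N]_m$, linear independence implies we have a basis. There is no real obstacle: the entire content of the lemma is the one-step passage from the combinatorial statement $h_X(m) = s$ to surjectivity of the evaluation map, which is essentially the content of \eqref{eq:hX}.
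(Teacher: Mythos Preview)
Your proof is correct and follows essentially the same approach as the paper: both recognize that $s$-wise $m$-independence means the vectors $\nu_m(v_1),\dots,\nu_m(v_s)$ are linearly independent in $\mathbb{K}^{\binom{N+m}{m}}$ (equivalently, the evaluation map is surjective), after which a dual-basis / extension-of-basis construction produces the desired $f_i$. The paper phrases the last step as extending $\nu_m(v_1),\dots,\nu_m(v_s)$ to a basis, taking the dual basis $\ell_1,\dots,\ell_M$, and setting $f_j \coloneqq \ell_j \circ \nu_m$; your version, choosing preimages of the standard basis vectors and completing with a basis of $\ker\mathrm{ev}$, is the same construction viewed from the other side of the duality.
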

\begin{proof}
 For simplicity,  we denote $M \coloneqq \binom{N+m}{m}$. We consider the Veronese map $\nu_m: \mathbb{K}^{N+1} \to  \mathbb{K}^{M}$ defined in \eqref{eq:veronese}. By assumption and \eqref{eq:hX},  $\nu_m(v_1),  \dots,  \nu_m(v_s)$ are linearly independent.  We extend $\nu_m(v_1),  \dots,  \nu_m(v_s)$ to a $\mathbb{K}$-basis of $\mathbb{K}^M$ and let $\ell_1,\dots,  \ell_M$ be its dual basis.  If we take $f_j\coloneqq \ell_j \circ \nu_m$ for each $1 \le j \le M$,  then $\{f_1,\dots,  f_M\}$ is a desired basis of $\mathbb{K}[x_0,\dots,  x_N]_m$.  
\end{proof}

\begin{corollary}\label{cor-1}
There is a function $B_4: \mathbb{N}^5 \to \mathbb{N}$ with the following property.  Suppose $X \subseteq \mathbb{P}^{N}(\mathbb{F})$ is a projective subvariety defined by a regular sequence of $(N-n)$ homogeneous polynomials in $R \coloneqq \mathbb{F}_q[x_0,\dots,  x_N]$ of degree at most $k$ and $E$ is a subset of $[s_{1}]\times[s_{2}]\times\cdots\times [s_{d-1}]$. Let $Y_i \subseteq  \mathbb{P}^{N}(\mathbb{F})$ be an $s_i$-wise $m$-independent projective subvariety of degree at most $\kappa$,  for each $i \in [d-1]$. Let $Y \subseteq \prod_{i=1}^{d-1} Y_i^{s_i}$ consist of points $y\coloneqq (y_{1,1},\dots,  y_{1,s_1},\dots,  y_{d-1,1},\dots,  y_{d-1,s_{d-1}})$ such that $y_{i,u} \ne y_{i,v}$ for any $i \in [d-1]$ and $1\le u \ne v \le s_i$,  and $Z_{E,g,y} \subseteq \mathbb{P}^N$ be the projective subvariety defined as
\begin{equation}\label{eq:Zgy}
Z_{E,g,y} \coloneqq \left\lbrace
z \in \mathbb{P}^N(\mathbb{F}_q): g(y_{1,i_1},\dots,  y_{d-1,i_{d-1}}, z) = 0,\quad \forall (i_{1},\cdots,i_{d-1})\in E
\right\rbrace.
\end{equation}If $\vert E\vert \le n$ and $\sum_{i=1}^{d-1}s_{i} \dim Y_i \le \binom{n- |E| +1+m}{m}-1$,  then 
\begin{align*}
&\left\lvert \left\lbrace
g \in R_m^{\otimes d} :  \dim (X \cap Z_{E,g,y}) \ge \dim X - |E| + 1 \text{~for some~} y \in Y
\right\rbrace \right\rvert  \\
&\le B_4(N,m,  s_1\cdots s_{d-1}, k,\kappa) q^{\binom{N+m}{m}^{d} -1}.
\end{align*}

\end{corollary}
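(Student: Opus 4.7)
The plan is to recast the condition defining the bad set of tensors $g$ as membership in a constructible subset of $R_m^{\otimes d}$ whose Zariski closure has dimension at most $\binom{N+m}{m}^d - 1$ and bounded degree, and then apply \cref{lem-5}. For fixed $y = (y_{i,j}) \in Y$, I would introduce the $\mathbb{F}_q$-linear evaluation map
\[
\phi_y: R_m^{\otimes d} \longrightarrow \bigoplus_{(i_1,\dots,i_{d-1}) \in E} R_m,\qquad \phi_y(g)_{(i_1,\dots,i_{d-1})}(z) \coloneqq g(y_{1,i_1},\dots,y_{d-1,i_{d-1}}, z).
\]
Since $Z_{E,g,y} = V(\phi_y(g))$, the condition $\dim(X \cap Z_{E,g,y}) \ge \dim X - |E| + 1$ is equivalent to $\phi_y(g) \in \mathcal{U}_X(m,\dots,m)$ (with $s = |E|$ slots), in the notation of \cref{lem-9}. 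The $s_i$-wise $m$-independence of $Y_i$ applied to the distinct points $y_{i,1},\dots,y_{i,s_i}$, together with \cref{lem-8}, yields polynomials $f_{i,j} \in R_m$ satisfying $f_{i,j}(y_{i,\ell}) = \delta_{j\ell}$, and the explicit tensor
\[
g = \sum_{(i_1,\dots,i_{d-1}) \in E} f_{1,i_1}(x_1)\cdots f_{d-1,i_{d-1}}(x_{d-1})\, h_{(i_1,\dots,i_{d-1})}(x_d)
\]
is a preimage of any prescribed tuple $(h_{(i_1,\dots,i_{d-1})})$. Hence $\phi_y$ is surjective, and by \cref{lem-10} its fiber $\phi_y^{-1}(\mathcal{U}_X(m,\dots,m))$ has dimension at most $\binom{N+m}{m}^d - \binom{n - |E| + 1 + m}{m}$.

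I would then form the incidence variety
\[
W \coloneqq \bigl\{(y,g) \in Y \times R_m^{\otimes d} : \phi_y(g) \in \mathcal{U}_X(m,\dots,m)\bigr\}
\]
and apply the fiber dimension formula (\cref{lem-4}) to the first projection $W \to Y$. Since $Y$ is Zariski-open in $\prod_{i=1}^{d-1} Y_i^{s_i}$, one has $\dim Y = \sum_{i=1}^{d-1} s_i \dim Y_i$, so the hypothesis $\sum_{i=1}^{d-1} s_i \dim Y_i \le \binom{n - |E| + 1 + m}{m} - 1$ gives $\dim W \le \binom{N+m}{m}^d - 1$. For the degree, \cref{lem-9} bounds the number and degrees of the defining equations of $\mathcal{U}_X(m,\dots,m)$ uniformly in terms of $N, k, m$; the coefficients of $\phi_y(g)$ are polynomial of bounded multi-degree in $(y, g)$; and $\prod_{i=1}^{d-1} Y_i^{s_i}$ has multi-projective degree at most $\kappa^{s_1 + \cdots + s_{d-1}}$. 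Repeated application of \cref{thm-2} then bounds $\deg W$ by some $B'(N, m, s_1\cdots s_{d-1}, k, \kappa)$.

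Finally, writing $\pi_2: W \to R_m^{\otimes d}$ for the second projection, the Zariski closure $V \coloneqq \overline{\pi_2(W)}$ contains the bad set in the statement, satisfies $\dim V \le \dim W \le \binom{N+m}{m}^d - 1$, and has degree at most $\deg W$. Applying \cref{lem-5} to the projective closure of $V$ in $\mathbb{P}^{\binom{N+m}{m}^d}$ yields $|V(\mathbb{F}_q)| \le B_4 \cdot q^{\binom{N+m}{m}^d - 1}$ for a suitable $B_4(N, m, s_1\cdots s_{d-1}, k, \kappa)$, as required. I expect the main obstacle to be the careful tracking of degrees through the construction of $W$, since it is cut out in a mixed affine-projective ambient space by equations of multi-degree controlled by several parameters; the key input that keeps this manageable is that \cref{lem-9} and \cref{lem-10} provide bounds uniform in $q$ and in the specific regular sequence defining $X$.
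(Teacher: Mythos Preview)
Your approach is correct and uses the same core ingredients (\cref{lem-8} for the surjectivity of $\phi_y$, \cref{lem-9} and \cref{lem-10} for $\mathcal{U}_X$), but the packaging differs from the paper's. The paper does not form an incidence variety at all: it fixes $y\in Y(\mathbb{F}_q)$, observes via the adapted basis of \cref{lem-8} that the set $G_y=\{g:\phi_y(g)\in\mathcal{U}_X(m,\dots,m)\}$ is an affine-linear preimage of $\mathcal{U}_X$, and applies \cref{lem-5} directly to bound $|G_y(\mathbb{F}_q)|$. It then bounds $|Y(\mathbb{F}_q)|$ by a second application of \cref{lem-5} to each $Y_i$, and finishes with the crude union bound $|G|\le\sum_{y\in Y(\mathbb{F}_q)}|G_y(\mathbb{F}_q)|$. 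The hypothesis $\sum_i s_i\dim Y_i\le\binom{n-|E|+1+m}{m}-1$ is used exactly to make the two $q$-exponents combine to at most $\binom{N+m}{m}^d-1$.

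What the paper's route buys is that the only degree bound needed is \cref{lem-9} for $\mathcal{U}_X$ itself; there is no projection and hence no need to control $\deg\overline{\pi_2(W)}$. Your assertion that $\deg V\le\deg W$ is not immediate in a multi-projective-times-affine ambient (the second projection is not a linear projection in the Segre embedding), and making it precise would require either a multidegree computation or an elimination-theoretic argument. You can sidestep this by bounding $|W(\mathbb{F}_q)|$ directly and using $|\pi_2(W(\mathbb{F}_q))|\le|W(\mathbb{F}_q)|$; but note that $|W(\mathbb{F}_q)|=\sum_{y\in Y(\mathbb{F}_q)}|G_y(\mathbb{F}_q)|$, so this collapses back to the paper's union bound. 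In short, your incidence-variety framing is a valid conceptual wrapper, but the honest work underneath is the same pointwise count the paper does explicitly.
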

\begin{proof}
Let $y\coloneqq (y_{1,1},\dots,  y_{1,s_1},\dots,  y_{d-1,1},\dots,  y_{d-1,s_{d-1}}) \in Y$ be fixed.  Denote $M \coloneqq \binom{N+m}{m}$.  For each $i \in [d-1]$,  we apply Lemma~\ref{lem-8} to find an $\mathbb{F}_q$-linear basis $\alpha_{i,1},\dots,  \alpha_{i,M}$ of $R_m$ such that $\alpha_{i,j}(y_{i,l})= \delta_{jl}$ for any $(j,l) \in [M] \times [s_i]$.  Additionally,  we fix an $\mathbb{F}_q$-linear basis $\alpha_{d,1},\dots,  \alpha_{d,M}$ of $R_m$.  Therefore,  each $g \in R_m^{\otimes d}$ can be written as 
\begin{equation}\label{cor-1:eq1}
g = \sum_{j_1,\dots,  j_{d} = 1}^{M} \lambda_{j_1,\dots,  j_d} \otimes_{i=1}^d \alpha_{i, j_i}.
\end{equation}
By the choice of $\alpha_{i,j}$'s,  the defining equations of $Z_{E,g,y}$ are 
\begin{equation}\label{cor-1:eq2}
h_{i_1,\dots,  i_{d-1}} (z) \coloneqq g(y_{1,i_1},  \dots,  y_{d-1,i_{d-1}},z) =\sum_{j_d = 1}^M \lambda_{i_1,\dots,  i_{d-1},  j_d} \alpha_{d,j_d} (z) = 0, 
\end{equation}
where $(i_{1},\cdots,i_{d-1})\in E$.  Denote
$h \coloneqq (h_{i_1,\dots,  i_{d-1}})_{(i_{1},\cdots,i_{d-1})\in E} \in R_m^{|E|}$.  Then $\dim (X \cap Z_{E,g,y}) \ge \dim X - |E| + 1$ if and only if $h \in  \mathcal{U}_X(m,\dots,  m)$.  Here $ \mathcal{U}_X(m,\dots,  m) \subseteq R_m^{|E|}$ is defined in \eqref{eq:UX}.  According to Propositions~\ref{lem-9} and \ref{lem-10},  we have 
\[
\dim \mathcal{U}_X(m,\dots,  m) \le |E| M- \binom{n- |E| +1+m}{m},\quad \deg \mathcal{U}_X(m,\dots,  m) \le B_3(N,k,m,\dots,  m), 
\]
where $B_3$ is the function in Proposition~\ref{lem-9}.  

Let 
$$G_y \coloneqq  \left\lbrace
g \in R_m^{\otimes d}:  \dim (X \cap Z_{E,g,y}) \ge \dim X - |E|  + 1
\right\rbrace.$$
By comparing $\lambda$'s in \eqref{cor-1:eq1} and \eqref{cor-1:eq2},  we conclude that 
\[ 
\dim G_y \le {M}^{d}  - \binom{n- |E| +1+m}{m},  \quad \text{and} \quad \deg G_y \le B_3(N,k,m,\dots,  m).
\]
  By Lemma~\ref{lem-5},  we have 
\[ 
|G_y| \le 2 B_3(N,m,k) q^{ M^d - \binom{n- |E| +1+m}{m}},\quad \vert Y_{i}\vert\le 2 \kappa q^{n_i} 
\]
where $n_i \coloneqq \dim Y_i$,  $i \in [d-1]$.  This implies 
\[
|Y| \le \prod_{i=1}^{d-1} |Y_i| \le (2\kappa)^{|E|} q^{\sum_{i=1}^{d-1} n_i s_i}\le (2\kappa)^{|E|} q^{\binom{n- |E| +1+m}{m} - 1}.  
\]
Consequently, writing $G \coloneqq  \cup_{y\in Y} G_y= \left\lbrace
g \in R_m^{\otimes d}:  \dim (X \cap Z_{E,g,y}) \ge \dim X - |E| + 1 \text{~for some~} y \in Y
\right\rbrace$, we obtain 
\begin{align*}
|G| \le \sum_{y\in Y} |G_y| &\le 2^{|E|+1}\kappa^{|E|} q^{\binom{n- |E| +1+m}{m} - 1} B_3(N,k,m,\dots,  m) q^{ M^d - \binom{n- |E| +1+m}{m}}\\
&= B_4(N,m,  s_1\cdots s_{d-1}, k,\kappa) q^{ M^d - 1},
\end{align*}
where $B_4$ is the function defined by $B_4(N,m,  s_1\cdots s_{d-1}, k,\kappa) \coloneqq (2\kappa)^{N+1} B_3(N,k,m,\dots,  m)$.
\end{proof} 

\begin{corollary}\label{lem-12}
There is a function $q_1: \mathbb{N}^{t+2} \to \mathbb{N}$ with the following property.  Let $t,m,r,  s_{1},\cdots,s_{d-1}$ be positive integers, and $E$ be a subset of $[s_{1}]\times[s_{2}]\times\cdots\times [s_{d-1}]$. Denote $N \coloneqq t + r + \vert E\vert$,  $l \coloneqq \left( \sum_{i=1}^{d-1} s_i \right) \left( \vert E\vert + t \right)$,  $s \coloneqq \max_{1 \le i \le d-1} \{s_i\}$,  and $m_j \coloneqq D\left(t-j+1,  l \right)$ for $1 \le j \le t$, where $D(\cdot,\cdot)$ is defined in~\eqref{eq:D}. For each $\sigma \in \mathfrak{S}_d$, let $Y^{\sigma}\subseteq \prod_{i=1}^{d-1} Y_{\sigma(i)}^{s_{\sigma(i)}}$ be the set consisting of all \[y^{\sigma} \coloneqq (y_{\sigma(1),1},\dots,  y_{\sigma(1),s_{\sigma(1)}},\dots,  y_{\sigma(d-1),1},\dots,  y_{d-1,s_{\sigma(d-1)}})\] such that $y_{\sigma(i),u} \ne y_{\sigma(i),v}$ for any $1 \le i \le d-1$ and $1 \le u \ne v \le s_{\sigma(i)}$,  $g^\sigma \in R^{\otimes d}$ is the element obtained by permuting $d$ factors of $g$ by $\sigma$. For any $y \in Y$,  
\[
y^{\sigma} \coloneqq ( y_{\sigma(1),1},\dots,  y_{\sigma(1),s_{\sigma(1)}},  \dots,  y_{\sigma(d-1),1},\dots,  y_{\sigma(d-1),s_{\sigma(d-1)}} ),\]
and $Z_{E,g^\sigma,  y^\sigma}$ is the variety defined by \eqref{eq:Zgy}. If 
\[
3 \le m ,\quad \max_{2 \le u \le s} \left\lbrace \frac{\psi(N,u,m)}{u-1}  \right\rbrace +1  \le  r,\quad l +1  \le \binom{t + 1 + m}{m}, 
\] 
then for any prime power $q \ge q_{1}(N,m,m_1,\dots,  m_t)$,  there exists a sequence of homogeneous polynomials:
\[
\{g\} \times (f_{k,1},\dots,  f_{k,r},  h_{k,1},\dots,  h_{k,t})_{1 \le k \le d} \in R_m^{\otimes d} \times \prod_{k=1}^{d} \left( R_m^r \times \prod_{j=1}^t R_{m_j} \right)
\]
where $R \coloneqq \mathbb{F}_q[x_0,\dots,  x_N]$ such that 
\begin{enumerate}[(a)]
\item\label{lem-12:item1} For each $1 \le k \le d$,  $Y_k\coloneqq V(f_{k,1},\dots,  f_{k,r})(\mathbb{F}_q)$ is $s$-wise $m$-independent,  and the dimension of $V(f_{k,1},\dots,  f_{k,r}, h_{k,1},\dots,  h_{k,t})$ is $\vert E\vert$.
\item\label{lem-12:item2} For each $\sigma \in \mathfrak{S}_d$ and $y\in Y$ we have 
\begin{equation}\label{condition-1}
\dim \left( V(f_{\sigma(d),1},  \dots,  f_{\sigma(d),r},  h_{\sigma(d),1},  \dots,  h_{\sigma(d),t}) \cap Z_{E,g^\sigma,  y^\sigma} \right) = 0,
\end{equation}

\end{enumerate}
\end{corollary}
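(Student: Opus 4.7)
The plan is to build the polynomial data in three stages, corresponding to the families $(f_{k,i})$, $g$, and $(h_{k,j})$ separately.

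For Stage~1, for each $k \in [d]$, I would apply Lemma~\ref{lem-2} with parameters $(N,s,m,r)$ to produce $f_{k,1},\ldots,f_{k,r} \in R_m$ such that $Y_k = V(f_{k,1},\ldots,f_{k,r})(\mathbb{F}_q)$ is $s$-wise $m$-independent; the stated condition on $r$ is exactly what Lemma~\ref{lem-2} requires. A small additional exclusion, via Corollary~\ref{cor-0}, ensures the sequence $(f_{k,1},\ldots,f_{k,r})$ is also regular, so that $V(f_{k,1},\ldots,f_{k,r})$ is a projective subvariety of dimension $t+|E|$ and, by Lemma~\ref{thm-2}, degree at most $m^r$.

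For Stage~2, for each $\sigma \in \mathfrak{S}_d$, I would apply Corollary~\ref{cor-1} with $X = V(f_{\sigma(d),1},\ldots,f_{\sigma(d),r})$ (so $n=t+|E|$ and the degree bound is $m$) and with the varieties $V(f_{\sigma(i),1},\ldots,f_{\sigma(i),r})$ playing the role of the $Y_i$'s. Its admissibility hypothesis $\sum_i s_{\sigma(i)}\,\dim Y_{\sigma(i)} \le \binom{n-|E|+1+m}{m}-1$ becomes $l \le \binom{t+1+m}{m}-1$, which matches our assumption on $l$. The corollary then bounds the number of ``bad'' $g \in R_m^{\otimes d}$ for this $\sigma$ by an $O(1/q)$ fraction of the total; summing over the $d!$ permutations and taking $q$ large enough, a good $g$ survives, for which Krull's principal ideal theorem forces $\dim\bigl(V(f_{\sigma(d),\cdot}) \cap Z_{E,g^\sigma,y^\sigma}\bigr) = t$ for every $\sigma$ and every $y^\sigma \in Y^\sigma$.

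For Stage~3, I would for each $k \in [d]$ construct $h_{k,1},\ldots,h_{k,t}$ inductively, maintaining at step $j$ the twin invariants that $V_k^{(j)} \coloneqq V(f_{k,\cdot},h_{k,1},\ldots,h_{k,j})$ has dimension $t+|E|-j$ and that $V_k^{(j)} \cap Z_{E,g^\sigma,y^\sigma}$ has dimension $t-j$ for every $\sigma$ with $\sigma(d)=k$ and every $y^\sigma \in Y^\sigma$. A candidate $h_{k,j} \in R_{m_j}$ breaks the first invariant precisely when it lies in $\mathcal{U}_{V_k^{(j-1)}}(m_j)$, and breaks the second (for a fixed $(\sigma,y^\sigma)$) precisely when it lies in $\mathcal{U}_{V_k^{(j-1)} \cap Z_{E,g^\sigma,y^\sigma}}(m_j)$. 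Proposition~\ref{lem-10} bounds the dimensions of these bad subvarieties by $\binom{N+m_j}{N}-\binom{t-j+1+m_j}{m_j}$, Proposition~\ref{lem-9} bounds their degrees, and Lemma~\ref{lem-5} converts these into $\mathbb{F}_q$-point counts. Since Lemma~\ref{lem-5} applied to each $Y_{\sigma(i)}$ also gives $|Y^\sigma| = O(q^l)$, the total number of bad $h_{k,j}$'s is $O\bigl(q^{\,l+\binom{N+m_j}{N}-\binom{t-j+1+m_j}{m_j}}\bigr)$, which is strictly less than $|R_{m_j}| = q^{\binom{N+m_j}{N}}$ for $q$ large enough, precisely because the degree $m_j = D(t-j+1,l)$ is engineered so that $\binom{t-j+1+m_j}{m_j} > l$. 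This inductive counting is the main obstacle; the choice $m_j = D(t-j+1,l)$, which translates the Hilbert-function lower bound of Lemma~\ref{lem-1} into the strict inequality $\binom{t-j+1+m_j}{m_j} > l$, is exactly what provides the margin needed to overpower the $O(q^l)$ test conditions indexed by $(\sigma,y^\sigma)$.
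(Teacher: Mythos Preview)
Your three-stage plan matches the paper's proof in Stages~1 and~2 essentially verbatim (Lemma~\ref{lem-2} plus Corollary~\ref{cor-0} for the $f_{k,i}$, then Corollary~\ref{cor-1} summed over permutations for $g$). The only substantive difference is Stage~3: you build the $h_{k,j}$ one at a time, at each step applying the $s=1$ case of Propositions~\ref{lem-9}--\ref{lem-10} to $V_k^{(j-1)}\cap Z_{E,g^\sigma,y^\sigma}$, whereas the paper applies these propositions once with $s=t$ to $X_y\coloneqq Y_k\cap Z_{E,g^\sigma,y^\sigma}$, directly bounding $\dim\mathcal{U}_{X_y}(m_1,\dots,m_t)\le\sum_j\binom{N+m_j}{N}-\min_j\binom{t-j+1+m_j}{m_j}$. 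Both routes are correct and rest on the same inequality $\binom{t-j+1+m_j}{m_j}>l$ coming from the choice $m_j=D(t-j+1,l)$; the paper's one-shot argument is a bit cleaner (no induction, and the degree parameter in $B_3$ stays at $m$ rather than growing with $\max_i m_i$), while your inductive version has the mild advantage of only invoking the single-hypersurface case of Proposition~\ref{lem-10}. Note also that your first invariant in Stage~3 is actually implied by the second via Krull's theorem, which is how the paper recovers $\dim V(f_{k,\cdot},h_{k,\cdot})=|E|$ without tracking it separately.
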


\begin{proof}
We define $q_1: \mathbb{N}^{t+2} \to \mathbb{N}$ as 
\[
\begin{aligned}
q_1(N, m, m_1,\dots, m_t)
&\quad\coloneqq \max\!\Bigl\{
  q_0(N,N,m,N),\;
q'_0\bigl(N,N,\underbrace{m,\dots,m}_{\text{$r$ times}}\bigr), \\[2mm]
&\qquad\qquad
  d B_4(N,m,N,m,m^N),\;
  d\,2^N m^{N^2} B_3(N,m,m_1,\dots,m_t)
\Bigr\},
\end{aligned}
\]
where $q_0$, $q'_0$ $B_3,  B_4$ are functions in Lemma~\ref{lem-2}, Corollary~\ref{cor-0}, Proposition~\ref{lem-9} and Corollary~\ref{cor-1},  respectively.  Moreover,  we observe that varieties $Y_{\sigma(d)}$,  $V(h_{\sigma(d),1}, \dots,  h_{\sigma(d),t})$ and $Z_{E,g^\sigma,  y^{\sigma}}$ actually only depend on the value of $\sigma(d)$.  Let $q$ be a prime power such that $q > q_1(N, m, r, m_1,\dots, m_t)$.  

By the choice of $q$,  we have $q > q_0(N,s,m,r)$ and $q > q'_0(N,N,m,\dots,m)$. Lemma~\ref{lem-2} and Corollary~\ref{cor-0} imply that there are $f_{k,1},\dots,  f_{k,r} \in R_m$ such that $Y_k = V(f_{k,1},\dots,  f_{k,r}) (\mathbb{F}_q)$ is $s$-wise $m$-independent and each sequence $f_{k,1},\dots,  f_{k,r}$ is regular for each $k \in [d]$.  In particular,  $Y_i$ is $s_i$-wise $m$-independent for each $i \in [d-1]$,  as $s_i \le s$.  By Lemmas~\ref{lem-6} and \ref{thm-2},  we have 
\[
\dim Y_k = N - r = t + \vert E\vert,\quad \deg Y_k  \le m^r.
\]
Since $l < \binom{t+1+m}{m}$, Corollary~\ref{cor-1} ensures the existence of a function $B_4: \mathbb{N}^5 \to \mathbb{N}$ such that apart from a subset of cardinality at most $d B_{4}(N,m,N,  m,m^{r})q^{\binom{N+m}{m}^d - 1}$,  every $g\in R_{m}^{\otimes d}$ satisfies $\dim (Y_{\sigma(d)} \cap Z_{E,g^\sigma,  y^\sigma}) = t$ for any $\sigma \in \mathfrak{S}_d$ and $y \in Y$. Since $q > d B_4(N,m,N,m,m^r)$,  the desired $g \in R_m^{\otimes d}$ must exist.   
 
Next,  given each $\{g\} \times (f_{k,1},\dots,  f_{k,r})_{k \in [d]} \in R_m^{\otimes d} \times \prod_{k=1}^d R_m^r$ with aforementioned properties,  we prove the existence of $(h_{k,1},\dots,  h_{k,t})_{k \in [d]} \in\prod_{k=1}^d \prod_{j=1}^t R_{m_j}$ such that \eqref{lem-12:item1} and \eqref{lem-12:item2} hold.  We claim that,  except a subset of cardinality at most $2^{N} m^{(N-t)N } B_3(N,m,m_1,\dots,  m_t)  q^{\sum_{j=1}^t \binom{N + m_j}{N} - 1}$,  every $(h_{1},\cdots,h_{t})\in\prod_{i=1}^{t}R_{m_{i}}$ satisfies 
\[
\dim \left( Y_{d} \cap V(h_{1}, \dots,  h_{t}) \cap Z_{E,g,  y} \right) = 0
\]
for any $y \in Y$.  If the claim holds,  then for each $k \in [d]$,  there exists $(h_{k,1},\dots,  h_{k, t}) \in \prod_{j=1}^t R_{m_j}$ such that $\dim \left( Y_{k} \cap V(h_{k,1}, \dots,  h_{k,t}) \cap Z_{E,g^\sigma,  y^\sigma} \right) = 0$
for any $\sigma \in \mathfrak{S}_d$ such that $\sigma(d) = k$.  In particular,  this implies $\dim \left( Y_{k} \cap V(h_{k,1}, \dots,  h_{k,t}) \right) = \vert E\vert$ and the proof is complete since $q > d 2^{N} m^{(N-t)N } B_3(N,m,m_1,\dots,  m_t) $.

It is left to prove the claim.  To this end, we need to bound $\lvert \cup_{y\in Y} \mathcal{U}_{X_y} \rvert$. For any $y \in Y$, applying Propositions~\ref{lem-9} and \ref{lem-10} to $X_y \coloneqq Y_d \cap Z_{E,g,y}$,  we conclude that 
\begin{align*}
\deg \mathcal{U}_{X_y}(m_1,\dots,  m_t) &\le B_3(N,m,  m_1,\dots,  m_t),\\ 
\dim \mathcal{U}_{X_y}(m_1,\dots,  m_t)  &\le \sum_{j=1}^t \binom{N + m_j}{N} - \min_{j \in [t]}\left\lbrace \binom{t -j + 1 + m_j}{m_j}\right\rbrace.
\end{align*}
Thus,  Lemma~\ref{lem-5} leads to 
\[
\lvert \mathcal{U}_{X_y}(m_1,\dots,  m_t) \rvert \le 2 B_3(N,m,  m_1,\dots,  m_t) q^{ \sum_{j=1}^t \binom{N + m_j}{N} - \min_{j \in [t]} \left\lbrace \binom{t -j + 1 + m_j}{m_j} \right\rbrace}.
\]
Since $|Y_k| \le 2 m^r q^{t + \vert E\vert}$,  we have $|Y| \le \prod_{i=1}^{d-1} |Y_i|^{s_i} \le (2m^r)^{\sum_{i=1}^{d-1} s_i} q^{ l }$.  As a consequence,  we obtain 
\begin{align*} 
\lvert \cup_{y\in Y} \mathcal{U}_{X_y} \rvert  &\le 2 B_3(N,m,  m_1,\dots,  m_t) q^{ \sum_{j=1}^t \binom{N + m_j}{N} - \min_{j \in [t]} \left\lbrace \binom{t -j + 1 + m_j}{m_j} \right\rbrace } (2m^r)^{\sum_{i=1}^{d-1} s_i} q^{ l }  \\
&= 2^{1 + \sum_{i=1}^{d-1} s_i} B_3(N,m,  m_1,\dots,  m_t)m^{r \sum_{i=1}^{d-1} s_i} q^{\sum_{j=1}^t \binom{N + m_j}{N} - \min_{j \in [t]} \left\lbrace \binom{t -j + 1 + m_j}{m_j} + l \right\rbrace } \\
&\le 2^{N} m^{(N-t)N } B_3(N,m,  m_1,\dots,  m_t) q^{\sum_{j=1}^t \binom{N + m_j}{N} - 1}.
\end{align*}
The last inequality follows from the constraint on $l$ and the definition of $N$ and $m_j$,  $j \in [t]$.  
\end{proof}

\section{Hypergraph Tur\'{a}n number}\label{sec:turan}
This section is devoted to the proof of Theorem~\ref{thm:Turan}.  To this end,  we first establish the following lemma. 

\begin{lemma}\label{lem-13}
There is a function $B_5:\mathbb{N}^{d+3} \to \mathbb{N}$ with the following property.  Let $N,m,  m_1,\dots,  m_d$ be positive integers and let $q > B_5(N,m_1,\dots,  m_d, m,d)$ be a prime power.  Denote $R \coloneqq \mathbb{F}_{q}[x_{0},\cdots,x_{N}]$.  Suppose for each $1 \le i \le d$,  $V_i$ is a projective subvariety of $\mathbb{P}^N(\mathbb{F}_{q})$ defined by a regular sequence $g_{i,1},\dots,  g_{i,N-k}$ of degree at most $m_i$.  Then for any $g \in R_m^{\otimes d}$,  we have $\lvert W_g \rvert \ge q^{dk-1}/2$,  where $W_g \coloneqq \left\lbrace
([y_1],\dots,  [y_d]) \in \prod_{i=1}^d V_i(\mathbb{F}_q): g([y_1],\dots,[y_d]) = 0
\right\rbrace$.
\end{lemma}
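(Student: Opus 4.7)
The plan is to count $|W_g|$ as the number of $\mathbb{F}_q$-points on the multi-projective complete intersection $X_g := Y \cap V(g)$, where $Y := V_1 \times \cdots \times V_d \subseteq (\mathbb{P}^N)^d$, using dimension analysis plus the Lang-Weil-type bound from \cref{thm-1}. I would argue by induction on $d$, reducing to the base case $d = 1$. The first observation is dimensional: each $V_i$ is a complete intersection of dimension $k$ in $\mathbb{P}^N$ by \cref{lem-6}, and since the defining equations for distinct $V_i$'s involve disjoint sets of variables, they combine into a regular sequence on the multigraded coordinate ring of $(\mathbb{P}^N)^d$, so $Y$ is a complete intersection of dimension $dk$. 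If $g$ vanishes identically on $Y$, then $W_g = Y(\mathbb{F}_q)$ and the bound follows a fortiori from the nontrivial case; otherwise adjoining $g$ extends the regular sequence, and $X_g$ is a complete intersection of dimension $dk - 1$ whose degree is bounded by a function of $(N, m, m_1, \ldots, m_d)$ via \cref{thm-2}.

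For the inductive step ($d \geq 2$), I would stratify $W_g$ via the projection to $Y_{d-1} := V_1 \times \cdots \times V_{d-1}$. For each $y' = (y_1, \ldots, y_{d-1}) \in Y_{d-1}(\mathbb{F}_q)$, consider the slice polynomial $g^{y'}(z) := g(y_1, \ldots, y_{d-1}, z) \in R_m$ and the fiber $W_g^{y'} := \{[z] \in V_d(\mathbb{F}_q) : g^{y'}([z]) = 0\}$. Then $|W_g| = \sum_{y'} |W_g^{y'}|$, and applying the $d=1$ case to $V_d$ and $g^{y'}$ gives $|W_g^{y'}| \geq q^{k-1}/2$. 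Iterating the base case (specialised to $g \equiv 0$) to bound $|V_i(\mathbb{F}_q)| \geq q^k/2$ yields $|Y_{d-1}(\mathbb{F}_q)| \geq q^{(d-1)k}/2^{d-1}$, so $|W_g| \geq q^{dk - 1}/2^d$. Choosing $B_5$ large enough that $q > B_5$ absorbs the extra factor $2^{d-1}$ recovers the stated bound $q^{dk-1}/2$.

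The hardest step --- and the main obstacle --- is the $d = 1$ base case, which reduces to showing $|V(\mathbb{F}_q)| \geq q^n/2$ for any complete intersection $V \subseteq \mathbb{P}^N$ of dimension $n$ and bounded degree $D$, provided $q$ is large relative to $(N, D)$. By Lang-Weil (\cref{thm-1}), a geometrically irreducible $\mathbb{F}_q$-rational component of maximal dimension $n$ contributes $q^n \bigl(1 + O(q^{-1/2})\bigr)$ points, which exceeds $q^n/2$ once $q$ is large. The obstacle is guaranteeing such a component exists: a priori Frobenius could permute the (at most $D$) $\overline{\mathbb{F}_q}$-irreducible components nontrivially, leaving no top-dimensional orbit fixed (as happens for $V(x_0^2 + x_1^2) \subseteq \mathbb{P}^2(\mathbb{F}_q)$ when $q \equiv 3 \pmod 4$). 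However, for $q$ exceeding a threshold depending on $D!$ (which dominates the possible Frobenius orbit structures on the at-most-$D$ components), a fixed component exists, and the upper bound of \cref{lem-5} on $\mathbb{F}_q$-points concentrated in the lower-dimensional intersection of permuted components becomes negligible compared to $q^n/2$. This Galois-orbit analysis, combined with the explicit Lang-Weil error term, is what determines $B_5$ and completes the proof.
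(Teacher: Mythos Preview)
Your approach (induct on $d$ by fibering over $V_1 \times \cdots \times V_{d-1}$) differs from the paper's, which uses the Segre embedding $\Seg: (\mathbb{P}^N)^d \hookrightarrow \mathbb{P}^{(N+1)^d - 1}$ to realize $W_g$ as a single projective subvariety of dimension $\ge dk - 1$ (Krull's principal ideal theorem) and bounded degree (\cref{thm-2}), then invokes \cref{thm-1} once. Both routes, however, run into the obstacle you correctly name: \cref{thm-1} requires a geometrically irreducible top-dimensional component defined over $\mathbb{F}_q$. Your proposed resolution --- that a Frobenius-fixed component must exist once $q$ exceeds a threshold depending on $D!$ --- is false, and your own example already refutes it. For \emph{every} prime power $q \equiv 3 \pmod 4$, however large, Frobenius swaps the two lines of $V(x_0^2 + x_1^2)$; the permutation action on geometric components is governed by the splitting field of the defining equations, not by the size of $q$.

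In fact this shows the lemma as literally stated cannot hold: take $d = 1$, $N = 3$, $k = 2$, $V_1 = V(x_0^2 + x_1^2) \subseteq \mathbb{P}^3$ over $\mathbb{F}_q$ with $q \equiv 3 \pmod 4$, and $g = x_2$. Then $V_1(\mathbb{F}_q) = \{[0:0:\ast:\ast]\}$ is a line and $W_g = \{[0:0:0:1]\}$, so $|W_g| = 1 < q/2 = q^{dk-1}/2$. The paper's proof applies \cref{thm-1} to $\Seg(W_g)$ without checking irreducibility, so the same gap is present there; what is really needed is either an extra hypothesis (e.g.\ that $W_g$, or at least one of its top-dimensional components, is geometrically irreducible over $\mathbb{F}_q$), or a separate verification that the specific varieties produced by \cref{lem-12} enjoy this property. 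A minor separate issue: your step ``choosing $B_5$ large enough absorbs the extra factor $2^{d-1}$'' does not work as written, since the ratio of $q^{dk-1}/2^d$ to $q^{dk-1}/2$ is independent of $q$; to make the induction close you would need the base case in the sharper form $(1 - O_{N,D}(q^{-1/2}))q^k$ before iterating.
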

\begin{proof}
By Krull's principal ideal theorem,  we have $\dim n \coloneqq W_g \ge \sum_{i=1}^d \dim V_i - 1 = dk - 1$.  We consider the Segre embedding $\Seg: (\mathbb{P}^n(\mathbb{F}_{q}))^d \to \mathbb{P}^{(N+1)^d-1}(\mathbb{F}_{q})$ defined by $\Seg([v_1],\dots,  [v_d]) = [v_1 \otimes \cdots \otimes v_d]$.  We notice that $\Seg(W_g)$ is a projective subvariety of $\mathbb{P}^{(N+1)^d-1}(\mathbb{F}_{q})$ defined by homogeneous quadratic polynomials defining $\Seg \left( (\mathbb{P}^n(\mathbb{F}_q))^d \right)$, homogeneous polynomials of degree at most $\max_{1 \le i \le d} \{ m_i \}$ induced by $g_{ij}$ where $1 \le i \le d$ and $1 \le j \le N-k$,  and the homogeneous polynomial of degree $m$ induced by $g$.  This together with Lemma~\ref{thm-2} implies that $\kappa \coloneqq \deg \Seg(W_g) \le C_1(N, m,\max_{1 \le j \le d} \{m_j\},d)$ for some function $C_1: \mathbb{N}^4 \to \mathbb{N}$.  By Lemma~\ref{thm-1},  we have 
\[
\lvert W_g \rvert = \lvert \Seg(W_g) \rvert \ge \frac{1}{2} q^n \ge \frac{1}{2} q^{dk - 1}
\]
if $q > B_5(N,m_1,\dots,  m_d, m,d) \coloneqq  4 \left( C(dk,  C_1(N, m,\max_{1 \le j \le d} \{m_j\},d),  N) + (k-1)^2(k-2)^2 \right)$ where $C$ is the function in Lemma~\ref{thm-1}.
\end{proof}

Now we are ready to prove Theorem~\ref{thm:Turan}. 
\begin{proof}[Proof of Theorem~\ref{thm:Turan}]
Let $\C H$ be a $(d-1)$-partite $(d-1)$-uniform hypergraph on vertex set $V(\mathcal{H})=[s_{1}]\cup\cdots\cup[s_{d-1}]$.
Let
\[
\beta \coloneqq (d^2  + 4d - 5)^{\frac{1}{3}},\quad s \coloneqq \max_{1 \le i \le d-1} \{s_i\},  \quad t \coloneqq \left\lceil\beta(e(\mathcal{H}) s)^{\frac{1}{3}} \right\rceil.
\]
We shall prove that if 
\[
s_{d}>  \left[ \left( \frac{3(d-1)}{\beta^2} \right)^{1 + \log t} t^{3 (1 + \log t)} 3^{t + 3} t!  \right] 9^{e(\mathcal{H})},   
\]
then there is an $\C H(s_d)$-free $d$-partite $d$-uniform hypergraph $\C G$ with $\Omega_{s_1,\dots,  s_{d-1}} ( n^{d-\frac{1}{e(\mathcal{H})}}) $ edges, i.e.
\[
\ex(n,\mathcal{H}(s_{d}))=\Omega_{s_1,\dots,  s_{d-1}} \left( n^{d-\frac{1}{e(\mathcal{H})}} \right).
\]

To construct the desired $\C G$, we need some further parameters. Let $S \coloneqq e(\mathcal{H})$,  $r \coloneqq S + t + 3$,  $l \coloneqq \left( S + t \right) \sum_{i=1}^{d-1} s_i$,   and $N \coloneqq S  + t  + r= 2r - 3$.  Firstly,  it is straightforward to verify that $\left\lfloor\frac{3u}{7} \right\rfloor  \le c (u - 1)$ where $c = 1/2$ if $2 \le u \le 7$ and $c = 7/15$ if $8 \le u$.  By Lemma~\ref{lem-3}, we have 
\[
\psi(N,u,3)\le \left\lfloor\frac{3u}{7}\right\rfloor \left( N + 1+\frac{u}{7} \right) \le c (u-1)\left( N + 1+\frac{u}{7} \right)
\]
for any $2 \le u$.  For $u \le 7$,  we have $N + 1 +u/7 \le 2r - 1$,  whereas for $8 \le u \le r$,  we have $N + 1 + u/7 \le  2r - 2 + r/7$.  Thus,  we may derive that $\psi(N,u,3) \le r (u-1)$ for any $2 \le u \le r$.

Next,  we notice that 
\[
\binom{t+4}{3}>\frac{t^{3}+9t^{2}}{6}\ge (d-1)s S +\frac{\frac{1}{6}\beta^{3}-(d-1)}{\beta}t \left( s S \right)^{\frac{2}{3}}+\frac{3}{2}\beta t\left( s S \right)^{\frac{1}{3}}.
\]
By the AM-GM inequality, we obtain
    \begin{align*}
\binom{t+4}{3}-l &> (d-1)s S  +\frac{\frac{1}{6}\beta^{3}-(d-1)}{\beta}t \left( s S  \right)^{\frac{2}{3}} + \frac{3}{2}\beta t\left( s S \right)^{\frac{1}{3}} - \left( S+ t \right) \sum_{i=1}^{d-1} s_i \\
        &\ge \frac{\frac{1}{6}\beta^{3}-(d-1)}{\beta}t \left( s S \right)^{\frac{2}{3}}  + \frac{3}{2}\beta t\left( s S \right)^{\frac{1}{3}} -(d-1)ts\\
        &\ge 2t \left[ \frac{3}{2}\left( \frac{1}{6}\beta^{3}-(d-1) \right) \right]^{1/2} \left( s S  \right)^{1/2}-(d-1)ts\\
        &\ge ts(d-1) \left[ 2 \left( \frac{3}{2}\left( \frac{1}{6}\beta^{3}-(d-1) \right) \right)^{1/2} - (d-1) \right] \\
        &=0.
        \end{align*}
Hence $t,r$ satisfy conditions in Corollary~\ref{lem-12} with $m = 3$.  Let $q_1,B_5$ be functions in Corollary~\ref{lem-12} and Lemma~\ref{lem-13}, respectively. Suppose $m_j \coloneqq D\left(t-j+1,  l \right)$,  $1 \le j \le t$, where $D(\cdot,\cdot)$ is the function defined in \eqref{eq:D}.  Let $n$ be an integer such that  

\[
n> C 2^S  \max \left\lbrace q_{1}(N,3,m_1,\dots,  m_t)^S,  B_5(N, 3,  m_1,\dots,  m_t)^S \right\rbrace,\quad \text{where } C \coloneqq d 3^r \prod_{j=1}^t m_j. 
\]   

By Bertrand's postulate \cite[Theorem~2.4]{montgomery2007multiplicative},  there exists a prime $p$ such that $C p^{S} \le n\le C (2p)^{S}
$.  In particular,  we must have 
\[
p > \max\{q_{1}(N,3,m_{1},\cdots,m_{t}),B_5(N,3,m_{1},\cdots,m_{t})\}.
\]

Thus,  there exists
\[
\{g\} \times (f_{k,1},\dots,  f_{k,r}, h_{k,1},\dots, h_{k,t})_{1 \le k \le d} \in R_3^{\otimes d} \times \prod_{k=1}^d \left( R_3^r \times \prod_{j=1}^t R_{m_j} \right)
\]
such that \eqref{lem-12:item1} and \eqref{lem-12:item2} of~\cref{lem-12} hold with $E=E(\mathcal{H})$,  where $R \coloneqq \mathbb{F}_p[x_0,\dots,  x_N]$.  Denote 
\begin{align*}
V_k &\coloneqq V(f_{k,1},\dots,  f_{k,r}, h_{k,1}, \dots, h_{k,t}),\quad 1 \le k \le d \\
W_g &\coloneqq \left\lbrace
(v_1,\dots,  v_d) \in \prod_{k=1}^d V_k(\mathbb{F}_p):  g(v_1,\dots,  v_d) = 0
\right\rbrace.
\end{align*}
Then Lemma~\ref{lem-13} implies $\lvert W_g \rvert\ge \frac{1}{2} p^{d S - 1}$ as $r + t = N - S$.  Meanwhile,  Lemmas~\ref{lem-5} and \ref{thm-2} imply $\lvert V_k(\mathbb{F}_p) \rvert \le 3^r \prod_{j=1}^t m_j p^S = Cp^S/d$ and $n > \sum_{k=1}^d \lvert V_k(\mathbb{F}_p) \rvert$.

Let $\mathcal{G}_0$ be the $d$-partite,  $d$-uniform hypergraph with the vertex set $V (\mathcal{G}_0) = \sqcup_{k=1}^d V_k(\mathbb{F}_p)$ and the hyperedge set $E(\mathcal{G}_0) = W_g$.  Adding $n - \sum_{k=1}^d \lvert V_k(\mathbb{F}_p) \rvert$ isolated vertices to $\mathcal{G}_0$,  and denoting the new hypergraph by $\mathcal{G}$.  Then $|V(\mathcal{G})| = n \le d (2p)^S$,  from which we may conclude that $e(\mathcal{G}) = \Omega_{d,  s_1,\dots,  s_{d-1}}(n^{d - \frac{1}{S}})$ since  
\[
e(\mathcal{G}) = |W_g| \ge \frac{1}{2} p^{dS - 1}  = n^{(dS - 1)\log_n p - \log_n 2} \ge n^{d - \frac{1}{S}} n^{-(d-\frac{1}{S}) \log_n C -dS \log_n 2} =  2^{-dS} C^{-d  +  \frac{1}{S}}  n^{d -\frac{1}{S}}.
\] 

We claim that $\mathcal{G}$ does not contain $\mathcal{H}(s_{d})$. Otherwise,  we may assume that vertices of the $k$-th part of $\mathcal{H}(s_{d})$ are in $V_k(\mathbb{F}_p)$,  which is the $k$-th part of $\mathcal{G}_0$. This together with the construction of $\mathcal{G}_0$ implies that there is a point $y\coloneqq (y_{1,1},\dots,  y_{1,s_1},\dots,  y_{d-1,1},\dots,  y_{d-1,s_{d-1}}) \in \prod_{i=1}^{d-1} V_i^{s_i}$ such that $y_{i,1},\dots,  y_{i,s_i}$ are distinct for each $1 \le i \le d-1$,  and $|Z_{E,g,y}\cap V_{k}| \ge s_d$,  where $Z_{E,g,y}$ is the set defined in \eqref{eq:Zgy}.  Note that $\dim (Z_{E,g,y}\cap V_{k}) = 0$,  so $|Z_{E,g,y}\cap V_{k}| = \deg Z_{E,g,y} \le 3^{r + S} \prod_{j=1}^t m_j  \le 3^{2S + t + 3} l^{1 + \log t} t!$.  Note that 
\[
l = (S + t) \sum_{i=1}^{d-1} s_i \le (d-1)s (S + t) 
= (d-1)sS + (d-1)s (\beta + 1) (s S)^{1/3} 
\le (d-1) (\beta + 2) s S. 
\]
Thus we obtain a contradiction that 
\[
s_d \le |Z_{E,g,y}\cap V_{k}| \le 3^{2S + t + 3} \left[ (d-1) (\beta + 2) s S\right]^{1 + \log t} t! \le 3^{2S + t + 3} t^{(1 + \log t) \log_t \left[ (d-1)(3\beta)\left( \frac{t}{\beta} \right)^3 \right]} t! < s_d.   \qedhere
\]
\end{proof}

\section{Hypergraph Zarankiewicz number}\label{sec:zarankiewicz}
In this section,  we prove Theorem~\ref{thm:Zarankiewicz}. We need the following lemma.
\begin{lemma}\label{lem-14}
Let $d\ge 2$ be a fixed integer. Assume $s_1,\cdots,  s_d,  t,  r, m$ are positive integers and $\mathcal{H}$ is a $(d-1)$-partite
$(d-1)$-uniform hypergraph whose $(d-1)$ parts have $s_1$, $\cdots$, $s_{d-1}$ vertices. such that 
\[
\binom{r+m+1}{m}>2 t e(\mathcal{H}),\quad s_{d}> m^{ e(\mathcal{H}) }\prod_{j=1}^{r}D(r-j+1, 2s_1\cdots s_{d-1}t+1).
\]
For any positive integers $n_1,\dots,  n_d$ satisfying $\log_{n_d} (n_1^{s_1} \cdots n_{d-1}^{s_{d-1}}) \le t$,  We have 
\[
\z(n_1,\dots,  n_d,  \mathcal{H}(s_{d})) = 
\Omega_{s_1,\dots,  s_{d-1},r,t} \left( n_1\cdots n_{d-1} n_d^{1 - \frac{1}{e(\mathcal{H})}} \right).
\]
\end{lemma}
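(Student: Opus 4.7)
Write $S := e(\mathcal{H})$ and $m_j := D(r-j+1,\,2s_1\cdots s_{d-1}t+1)$ for $1\le j\le r$, so the hypothesis on $s_d$ reads $s_d > m^S\prod_{j=1}^r m_j$. The plan is to adapt the random algebraic construction of \cref{thm:Turan} to the asymmetric, sided setting. First I would use Bertrand's postulate to pick a prime $q$ with $q\asymp n_d^{1/S}$ exceeding all threshold functions invoked from \cref{cor-0}, \cref{lem-5}, \cref{thm-1}, and \cref{lem-13}. Setting $a_i := \lfloor\log_q n_i\rfloor$ for $i<d$, the hypothesis $\log_{n_d}(n_1^{s_1}\cdots n_{d-1}^{s_{d-1}})\le t$ gives $\sum_{i<d} s_i a_i\le St$. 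Take $V_i\subseteq\mathbb{P}^{a_i}(\mathbb{F}_q)$ with $|V_i|=n_i$ for $i<d$, and use \cref{cor-0} to pick a regular sequence $h_1,\dots,h_r\in R:=\mathbb{F}_q[x_0,\dots,x_{S+r}]$ with $\deg h_j=m_j$; the subvariety $X:=V(h_1,\dots,h_r)\subseteq\mathbb{P}^{S+r}(\mathbb{F}_q)$ has dimension $S$ and $|X(\mathbb{F}_q)|\asymp q^S\asymp n_d$ by \cref{thm-1}, so one can select $V_d\subseteq X(\mathbb{F}_q)$ of size $n_d$.

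Next, for a multi-homogeneous form $g$ of multidegree $(m,\dots,m)$ on $\mathbb{P}^{a_1}\times\cdots\times\mathbb{P}^{a_{d-1}}\times\mathbb{P}^{S+r}$, let $\mathcal{G}_g$ be the $d$-partite hypergraph with parts $V_1,\dots,V_d$ and edge set $\{(v_1,\dots,v_d):g(v_1,\dots,v_d)=0\}$. A copy of $\mathcal{H}(s_d)$ in $\mathcal{G}_g$ would yield a tuple $y=(y_{i,j})_{i<d,\,j\in[s_i]}$ with entries distinct within each part and at least $s_d$ points $z\in V_d$ satisfying the $S$ equations $g(y_{1,i_1},\dots,y_{d-1,i_{d-1}},z)=0$ for $(i_1,\dots,i_{d-1})\in E(\mathcal{H})$. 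If these $S$ equations together with $h_1,\dots,h_r$ form a regular sequence in $R$, then \cref{lem-6} forces the common zero set to have dimension $0$, and \cref{thm-2} bounds its cardinality by $m^S\prod_{j=1}^r m_j<s_d$, a contradiction. The task therefore reduces to finding a single $g$ for which this regularity holds uniformly over all valid $y$.

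To produce such $g$, I would apply \cref{lem-10} to $X$ with $n=S+r$, $s=S$: the non-regularity variety $\mathcal{U}_X(m,\dots,m)\subseteq R_m^S$ has codimension at least $\binom{r+1+m}{m}$, and by \cref{lem-9} has degree bounded by a constant depending only on $r,m,S$. For each fixed $y$, the $\mathbb{F}_q$-linear evaluation $g\mapsto(g(y_e,\cdot))_{e\in E(\mathcal{H})}$ pulls $\mathcal{U}_X(m,\dots,m)$ back to a subvariety of the ambient $g$-space of codimension at least $\binom{r+1+m}{m}$. Using \cref{lem-5} to count $\mathbb{F}_q$-points and union-bounding over the at most $q^{\sum_{i<d} s_i a_i}\le q^{St}$ tuples $y$, the count of bad $g$ is at most
\[
O_{S,r,m}(1)\cdot q^{\dim_{\mathbb{F}_q}(\text{$g$-space}) + St - \binom{r+1+m}{m}},
\]
strictly less than the total since $\binom{r+m+1}{m}>2tS$. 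A surviving $g$ yields, via an asymmetric variant of \cref{lem-13} (whose Segre-embedding argument extends to products of differently-sized projective spaces), at least $\tfrac12 q^{a_1+\cdots+a_{d-1}+S-1}\gtrsim n_1\cdots n_{d-1}\,n_d^{1-1/S}$ edges.

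The main technical obstacle I anticipate is verifying the pullback codimension uniformly over $y$. In the Tur\'an construction, \cref{lem-12} secures this via the $s$-wise $m$-independence of each $Y_i$, which by \cref{lem-8} supplies a Veronese dual basis making the evaluation map surjective. In the Zarankiewicz regime the $V_i$ are far too large for $s$-wise $m$-independence, so one must instead identify a generic locus of $y\in\prod_{i<d}V_i^{s_i}$ on which the evaluation map is surjective and control the contribution of the complementary bad locus; the factor of $2$ appearing in both $l=2s_1\cdots s_{d-1}t+1$ and $\binom{r+m+1}{m}>2tS$ is precisely the slack intended to absorb these corrections.
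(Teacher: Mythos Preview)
Your proposal contains two genuine gaps.

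First, fixing $h_1,\dots,h_r$ before choosing $g$ does not give the codimension you claim. You write that you apply \cref{lem-10} ``to $X$ with $n=S+r$,'' but the $X$ you defined is $V(h_1,\dots,h_r)$, which has $\dim X=S$, and in \cref{lem-10} the symbol $n$ denotes $\dim X$. With $n=S$ and $s=S$ equations of degree $m$, the codimension of $\mathcal{U}_X(m,\dots,m)$ in $R_m^S$ is only $\binom{1+m}{m}=m+1$, far short of what the union bound over $y$ requires. If instead you intended $X=\mathbb{P}^{S+r}$, then the codimension $\binom{r+1+m}{m}$ is correct, but you have only arranged that the $S$ evaluation polynomials form a regular sequence on $\mathbb{P}^{S+r}$; there is no reason the \emph{pre-chosen} $h_1,\dots,h_r$ should extend that sequence regularly. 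The paper resolves this by reversing the order: it first chooses $g$ so that $\dim Z_{E,g,y}=r$ for every $y$ (applying \cref{lem-10} with $X=\mathbb{P}^N$, hence $n=N=S+r$), and only afterward chooses $(h_1,\dots,h_r)$ so that each $Z_{E,g,y}\cap V(h_1,\dots,h_r)$ is zero-dimensional (applying \cref{lem-10} with $X=Z_{E,g,y}$, $n=r$, and degrees $m_1,\dots,m_r$, whose definition via $D(r-j+1,\cdot)$ is tailored precisely to make this second step succeed).

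Second, the obstacle you flag---surjectivity of the evaluation $g\mapsto(g(y_e,\cdot))_e$---is genuine in your framework with $V_i\subseteq\mathbb{P}^{a_i}$ and is not resolved by your sketch. When $a_i=\lfloor\log_q n_i\rfloor$ the set $V_i$ is essentially all of $\mathbb{P}^{a_i}(\mathbb{F}_q)$, and may even collapse to a point when $n_i<q$, so one cannot expect surjectivity for arbitrary $y$. The paper's fix is different and much simpler than a generic-locus argument: it places each $Y_i$ inside the \emph{large} projective space $\mathbb{P}^n(\mathbb{F}_p)$ with $n=\max_i n_i$, taking the $n_i$ points of $Y_i$ to be linearly independent. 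Any $s_i$ of them are then automatically $s_i$-wise $m$-independent for every $m\ge1$, and \cref{lem-8} gives surjectivity for every valid $y$. The tuple count is still $|Y|\le\prod_i n_i^{s_i}$, and the hypothesis $\log_{n_d}(\prod_i n_i^{s_i})\le t$ together with $n_d\asymp p^S$ converts this to at most $p^{2St}$; the factor of $2$ in $\binom{r+m+1}{m}>2tS$ absorbs the loss incurred when passing from $\log_{n_d}$ to $\log_p$, not a generic-locus correction.
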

\begin{proof}
Denote $S \coloneqq e(\mathcal{H})$,  $D \coloneqq \prod_{i=1}^{r} D(r-j+1, 4St+1)^{1/S}$,  $N \coloneqq r + S $ and $n \coloneqq  \max \{ n_{1},\dots,n_{d-1} \}$. For each $1 \le j \le r$,  we define $\mu_j \coloneqq D(r - j + 1,  2St + 1)$.  We define 
\[
\begin{aligned}
B_6(s_1,\dots,s_{d-1},t,r) \quad\coloneqq \max\!\Bigl\{
  &4D^2,\;
  2D B_3(N,m,\mu_1,\dots,\mu_r), \\[1mm]
\qquad\qquad
  &4D B_3(N,0,\underbrace{m,\dots,m}_{S\text{ times}}),\;
  4D C(S-1,m\mu_1\cdots\mu_r,N)
\Bigr\}.
\end{aligned}
\]
Here $B_3$ and $C$ are functions in Lemmas~\ref{lem-9} and \ref{thm-1},  respectively.  Let $n_d$ be an integer such that $n_d > B_6(s_1,\dots,  s_{d-1},t,r)^{S}$ and by assumption we have $n_d \ge n_1^{s_1/t} \cdots n_{d-1}^{s_{d-1}/t}$.  By the choice of $n_d$,  we have $n_d \ge (2D)^S$. Hence by the same argument in the proof  Theorem~\ref{thm:Turan},  we may choose a prime number $p$ such that $(pD)^S \le n_d \le (2pD)^S$.  Note that $n_d > (2D)^{2S}$ implies $p > 2D$.  Therefore,  we obtain 
\[
\log_{p}(n_{1}^{s_1}\cdots n_{d-1}^{s_{d-1}})= \log_{p}(2pD)\sum_{i=1}^{d-1}s_{i} \log_{2pD}(n_{i}) \le (1+\log_{p}(2D)) S 
\sum_{i=1}^{d-1}s_{i} \log_{n_{d}}(n_{i}) < 2 S t.
\]
Consequently,  we have 
\[
m_j \coloneqq D(r - j + 1,  \log_p \left( n_1^{s_1}\cdots n_{d-1}^{s_{d-1}} \right) + 1) < \mu_j,\quad 1 \le j \le r.
\]
By \eqref{eq:D},  we observe that 
\[
m_j^{r - j + 1} \le (r - j + 1)! \binom{m_j - 1 + r - j + 1}{r - j + 1} \le (r - j + 1)! \left( \log_p(n_1^{s_1}\cdots n_{d-1}^{s_{d-1}}) + 1 \right).
\]  

The rest of the proof proceeds in three steps.
\begin{enumerate}[(1)]
\item We set $R' \coloneqq \mathbb{F}_p[x_0,\dots,  x_n]$ and $R \coloneqq \mathbb{F}_p[x_0,\dots,  x_N]$.  Note that $m_j  < \mu_j$.  For each $1 \le i \le d-1$,  we let $Y_i$ be a subset of $\mathbb{P}^n(\mathbb{F}_p)$ consisting of $n_i$ linearly independent points $y_{i,1},\dots,  y_{i,n_i} \in Y_i$.  It is clear that $Y_i$ is $n_i$-wise $m$-independent for any integer $m \ge 1$.  In particular,  $Y_i$ is $s_i$-wise $m$-independent.  Assume $Y \subseteq \prod_{i=1}^{d-1} Y_i^{s_i}$ is the subset consisting of points $y \coloneqq (y_{1,1},\dots,  y_{1,s_1}, \dots, y_{d-1,1},\dots,  y_{d-1, s_{d-1}})$ such that $y_{i,u} \ne y_{i,v}$ for any $1 \le i \le d-1$ and $1 \le u \ne v \le s_i$. Let $E=E(\mathcal{H})\subset[s_{1}]\times\cdots\times[s_{d-1}]$  Given $y\in Y$,  we consider 
\[
G_y \coloneqq \left\lbrace
g \in {R'}_m^{\otimes (d-1)} \otimes R_m: \dim Z_{E,g,y} \ge N -S + 1
\right\rbrace,
\]
where $Z_{E,g,y}$ is defined by \eqref{eq:Zgy}.  By the same argument in the proof of Corollary~\ref{cor-1},  we may conclude that 
\[
\dim G_y \le \binom{n + m}{m}^{d-1} \binom{N + m}{m} - \binom{ r + m+1}{m},\quad \deg G_y \le B_3(N,0,m,\dots,  m).
\]
Hence by Lemma~\ref{lem-5},  we have 
\[
|G_y(\mathbb{F}_p)| \le 2 B_3(N,0,m,\dots,  m) p^{\binom{n + m}{m}^{d-1} \binom{N + m}{m} - \binom{ r + m+1}{m}}.
\]
This implies
\begin{align*}
\left\lvert \cup_{y\in Y} G_y \right\rvert  &\le  2 B_3(N,0,m,\dots,  m) p^{\binom{n + m}{m}^{d-1} \binom{N + m}{m} - \binom{r + m+1}{m}} \prod_{i=1}^{d-1} n_i^{s_i} \\
&\le  \left( 2 B_3(N,0,m,\dots,  m) p^{\binom{n + m}{m}^{d-1} \binom{N + m}{m} - 1} \right) \left( p^{- 2 S t} \prod_{i=1}^{d-1} n_i^{s_i} \right) \\
&< p^{\binom{n + m}{m}^{d-1} \binom{N + m}{m} }.
\end{align*}
The last inequality is because $2pD>2DB_{3}(N,m,\mu_{1},\cdots,\mu_{r})$. Thus,  there exists some $g \in  {R'}_m^{\otimes {d-1}} \otimes R_m$ such that $\dim Z_{g,y} = N- S$ for any $y \in Y$. 
\item For a fixed $y \in Y$,  we consider 
\[
\mathcal{U}_{Z_{E,g,y}} (m_1,\dots,  m_r) \coloneqq \left\lbrace
(h_1,\dots,  h_r) \in \prod_{j=1}^r R_{m_j}: \dim \left( Z_{E,g,y} \cap V(h_1,\dots,  h_r) \right) \ge 1
\right\rbrace. 
\] 
By Propositions~\ref{lem-9} and \ref{lem-10},  we derive that $\deg \mathcal{U}_{Z_{E,g,y}} \le B_3(N, 3,m_1,\dots,  m_r)$ and 
\[ 
\dim \mathcal{U}_{Z_{E,g,y}} \le \sum_{j=1}^r \binom{N + m_j}{N} - \min_{1 \le j \le r} \binom{r - j + 1 + m_j}{m_j}. 
\]
Lemma~\ref{lem-6} implies that 
\begin{align*}
\lvert \cup_{y \in Y} \mathcal{U}_{Z_{g,y}}(m_1,\dots,  m_r) \rvert 
&\le B_3(N, m,m_1,\dots,  m_r) p^{\sum_{j=1}^r \binom{N + m_j}{N} - \min_{1 \le j \le r} \binom{r - j + 1 + m_j}{m_j}} |Y|   \\ 
&\le B_3(N, m,m_1,\dots,  m_r) p^{\sum_{j=1}^r \binom{N + m_j}{N} - \min_{1 \le j \le r} \binom{r - j + 1 + m_j}{m_j}} \prod_{i=1}^{d-1} n_i^{s_i} \\
&< p^{\sum_{j=1}^r \binom{N + m_j}{N}}.
\end{align*}
The last inequality follows from the definition of $m_1,\dots,  m_r$,  and our choice of $p$.  Consequently,  there is some $(h_1,\dots,  h_r) \in \prod_{j=1}^r R_{m_j}$ such that $\dim \left( Z_{E,g,y} \cap V(h_1,\dots, h_r) \right) = 0$ for any $y\in Y$.  
\item Lemmas~\ref{lem-5} and \ref{thm-2} lead to
\[
\lvert V(h_1,\dots,  h_r) (\mathbb{F}_p) \rvert \le 2 p^{N-r} \prod_{j=1}^r m_j = 2 p^{S} \prod_{j=1}^r m_j  \le n_d.
\] 
We add $n_d - \lvert V(h_1,\dots,  h_r) (\mathbb{F}_p) \rvert$ distinct points to $V(h_1,\dots,  h_r) (\mathbb{F}_p)$ and denote the new set by $Y_d$.  Let $\mathcal{G}$ be the $d$-partite $d$-uniform hypergraph defined as follows.  The vertex set $V(\mathcal{G})$ of $\mathcal{G}$ is $\sqcup_{k=1}^d Y_k$ and the $k$-th part is $Y_k$; The edge set $E(\mathcal{G})$ of $\mathcal{G}$ consists of $(y_1,\dots,  y_d) \in \left( \prod_{i=1}^{d-1} Y_i \right) \times V(h_1,\dots,  h_r) (\mathbb{F}_p)$ such that $g(y_1,\dots,  y_d) = 0$.  For fixed $(y_1,\dots,  y_{d-1}) \in  \prod_{i=1}^{d-1} Y_i$,  we have $\dim Z = S-1$ and $\deg Z \le m \prod_{j=1}^r m_j$ where 
\[
Z \coloneqq \{z\in \mathbb{P}^N: g(y_1,\dots,  y_{d-1},z) = h_1(z) = \cdots = h_r(d) = 0\}.
\]
By Lemma~\ref{thm-1} and the choice of $p$,  $|Z(\mathbb{F}_p)| \ge p^{S-1}/2$,  from which we obtain 
\[
e(\mathcal{G}) \ge \frac{p^{S-1}}{2} \prod_{i=1}^{d-1} n_i  = \Omega(n_1\cdots n_{d-1} n_d^{1 - 1/S}).
\]
If the $k$-th part of $\mathcal{G}$ contains that of $\mathcal{K}$ for each $1 \le k \le d$,  then there is some $y = (y_{1,1},\dots,  y_{s_1},  \dots,  y_{d-1,1},\dots,  y_{d-1,s_{d-1}}) \in Y$ such that 
\[
\dim \left( Z_{E,g,  y} \cap V(h_1,\dots,  h_r) \right) = 0,  \quad \lvert Z_{E,g,  y} \cap V(h_1,\dots,  h_r)(\mathbb{F}_p) \rvert \ge s_d.
\]
However,  this leads to a contradiction: 
\[
s_d > m^S \prod_{j=1}^r D(r-j+1,  2 S t + 1)  > m^S \prod_{j=1}^r m_j \ge \lvert Z_{E,g,  y} \cap V(h_1,\dots,  h_r)(\mathbb{F}_p) \rvert\ge s_d.   \qedhere
\]
\end{enumerate}
\end{proof}
    
Now we are ready to complete the proof of Theorem~\ref{thm:Zarankiewicz}.
\begin{proof}[Proof of Theorem~\ref{thm:Zarankiewicz}]
Denote $S \coloneqq e(\mathcal{H})$,  $r \coloneqq \lceil\sqrt{S}\rceil$.  For each integer $m > 0$,  we consider
\[
t \coloneqq t(m) = \max \left\lbrace
u \in \mathbb{N}:  2 S u \le \binom{r+m+1}{m}-1
\right\rbrace.
\]
Then $\binom{r+m+1}{m}-1-2S < 2S t  \le\binom{r+m+1}{m}-1$.  Clearly,  there are constants $c_1,  c_2$,  depending only on $S$,  such that 
\begin{equation}\label{thm:Zarankiewicz:eq0}
c_1 m^{r+1} <  t  <  c_2 m^{r+1}.
\end{equation}
According to Lemma~\ref{lem-11},  we have 
\[
\prod_{j=1}^{r} D(r-j+1, 2St+1) \le (2St+1)^{r}r!\le (3rSt)^{r},
\]
from which we obtain 
\begin{equation}\label{thm:Zarankiewicz:eq1}
m^{S} \prod_{j=1}^r D( r -j+1, 2St +1)\le m^S (3rSt)^{r} \le c_1^{-\sqrt{S}} t^{\sqrt{S}} (3r S t)^{r}
\end{equation}

Let $c_3 \coloneqq c_1^{-\sqrt{S}} (3 S r)^r$.  We define 
\[
m_0 \coloneqq \max \{m\in \mathbb{N}:  c_3 (c_2 m^{r+1})^{2 \sqrt{S} + 1} < s_d\}.
\]
By definition,  we have 
\[
c_4 s_d \le c_3 (c_1 m_0^{r+1})^{2 \sqrt{S} + 1} < c_3 (c_2 m_0^{r+1})^{2 \sqrt{S} + 1} < s_d
\]
for some constant $c_4$ depending on $S$.  Thus,  $c_4 s_d <  c_3 t_0^{2\sqrt{S} + 1}$ for $t_0 \coloneqq t(m_0)$.  Taking $C \coloneqq (c_3^{-1} c_4)^{1/(2 \sqrt{S} + 1)}$,  we deduce that $s_d \le (C^{-1} t_0)^{2\sqrt{S} + 1}$.  If $\log_{n_d} (n_1^{s_1} \cdots n_{d-1}^{s_{d-1}}) \le C s_{d}^{1/(2 \sqrt{S} + 1)}$,  then we conclude that 
\[
\log_{n_d} (n_1^{s_1} \cdots n_{d-1}^{s_{d-1}}) \le t_0.
\]
By \eqref{thm:Zarankiewicz:eq0} and \eqref{thm:Zarankiewicz:eq1},  we also have 
\[
m_0^S \prod_{j=1}^{r} D(r-j+1, 2St+1) \le c_1^{-\sqrt{S}} t_0^{\sqrt{S}} (3r S t_0)^r \le c_3 t_0^{2 \sqrt{S} + 1} < c_3(c_2 m_0^{r+1})^{2\sqrt{S} + 1}  < s_d.
\]
Hence positive integers $s_1,\dots,  s_{d},  t_0, r,m_0$ satisfy conditions in Lemma~\ref{lem-14},  and this implies
\[
\z(n_1, \dots,  n_d,  \mathcal{H}(s_{d})) = \Omega_{s_1,\dots,  s_{d-1},s_{d}}(n_1,\dots,  n_{d-1} n_d^{1- \frac{1}{e(\mathcal{H})}}).  \qedhere
\]
\end{proof}

\section*{Acknowledgement.} This work was initiated during the ECOPRO 2025 summer students research program. Q.~Y.~C. is grateful to Hong Liu for hosting his visit at IBS as a student researcher.  H.~L. was supported by the Institute for Basic Science (IBS-R029-C4). Q.~Y.~C. and K.~Y. were supported by the National Natural Science Foundation of China (No. 12571554) and the National Key Research Project of China (No. 2023YFA1009401).

\bibliographystyle{abbrv}
\bibliography{ref}

\begin{thebibliography}{10}

\bibitem{alon2012bruijn}
N.~Alon, K.~E. Mellinger, D.~Mubayi, and J.~Verstra{\"e}te.
\newblock The de {B}ruijn--{E}rd{\H{o}}s theorem for hypergraphs.
\newblock {\em Designs, Codes and Cryptography}, 65(3):233--245, 2012.

\bibitem{ananyan2020small}
T.~Ananyan and M.~Hochster.
\newblock Small subalgebras of polynomial rings and {S}tillman’s conjecture.
\newblock {\em Journal of the American Mathematical Society}, 33(1):291--309, 2020.

\bibitem{ballico2023strength}
E.~Ballico, A.~Bik, A.~Oneto, and E.~Ventura.
\newblock Strength and slice rank of forms are generically equal.
\newblock {\em Israel Journal of Mathematics}, 254(1):275--291, 2023.

\bibitem{berenstein1991effective}
C.~A. Berenstein and A.~Yger.
\newblock Effective {B}ezout identities in ${Q} [z_1,..., z_n]$.
\newblock {\em Acta Math}, 166(1-2):69--120, 1991.

\bibitem{bukh2024extremal}
B.~Bukh.
\newblock Extremal graphs without exponentially small bicliques.
\newblock {\em Duke Mathematical Journal}, 173(11):2039--2062, 2024.

\bibitem{Cayley87}
A.~Cayley.
\newblock On the intersection of curves.
\newblock {\em Mathematische Annalen}, 30(1):85--90, 1887.

\bibitem{chen2025bounds}
Q.~Chen and K.~Ye.
\newblock Bounds for geometric rank in terms of subrank.
\newblock {\em arXiv preprint arXiv:2506.16132}, 2025.

\bibitem{conlon2017finite}
D.~Conlon and J.~Lee.
\newblock Finite reflection groups and graph norms.
\newblock {\em Advances in Mathematics}, 315:130--165, 2017.

\bibitem{conlon2024extremal}
D.~Conlon, J.~Lee, and A.~Sidorenko.
\newblock Extremal numbers and {S}idorenko’s conjecture.
\newblock {\em International Mathematics Research Notices}, 2024(13):10285--10297, 2024.

\bibitem{couvreur2016upper}
A.~Couvreur.
\newblock An upper bound on the number of rational points of arbitrary projective varieties over finite fields.
\newblock {\em Proceedings of the American Mathematical Society}, 144(9):3671--3685, 2016.

\bibitem{Erdos64}
P.~Erd\H{o}s.
\newblock On extremal problems of graphs and generalized graphs.
\newblock {\em Israel Journal of Mathematics}, 2:183--190, 1964.

\bibitem{fulton2013intersection}
W.~Fulton.
\newblock {\em Intersection theory}, volume~2.
\newblock Springer Science \& Business Media, 2013.

\bibitem{grothendieck1966elements}
A.~Grothendieck.
\newblock {\'E}l{\'e}ments de g{\'e}om{\'e}trie alg{\'e}brique: Iv. {\'e}tude locale des sch{\'e}mas et des morphismes de sch{\'e}mas, troisi{\`e}me partie.
\newblock {\em Publications Math{\'e}matiques de l'IH{\'E}S}, 28:5--255, 1966.

\bibitem{harris2013algebraic}
J.~Harris.
\newblock {\em Algebraic geometry: a first course}, volume 133.
\newblock Springer Science \& Business Media, 2013.

\bibitem{hermann1926frage}
G.~Hermann.
\newblock Die frage der endlich vielen schritte in der theorie der polynomideale.(unter benutzung nachgelassener s{\"a}tze von k. hentzelt).
\newblock {\em Mathematische Annalen}, 95:736--788, 1926.

\bibitem{hermann1998question}
G.~Hermann.
\newblock The question of finitely many steps in polynomial ideal theory.
\newblock {\em ACM SIGSAM Bulletin}, 32(3):8--30, 1998.

\bibitem{kollar1988sharp}
J.~Koll{\'a}r.
\newblock Sharp effective nullstellensatz.
\newblock {\em Journal of the American Mathematical Society}, pages 963--975, 1988.

\bibitem{lang1954number}
S.~Lang and A.~Weil.
\newblock Number of points of varieties in finite fields.
\newblock {\em American Journal of Mathematics}, 76(4):819--827, 1954.

\bibitem{lee2025sidorenko}
H.~Lee.
\newblock On {S}idorenko exponents of hypergraphs.
\newblock {\em arXiv preprint arXiv:2509.08680}, 2025.

\bibitem{MYZ18}
J.~Ma, X.~Yuan, and M.~Zhang.
\newblock Some extremal results on complete degenerate hypergraphs.
\newblock {\em Journal of Combinatorial Theory. Series A}, 154:598--609, 2018.

\bibitem{montgomery2007multiplicative}
H.~L. Montgomery and R.~C. Vaughan.
\newblock {\em Multiplicative number theory I: Classical theory}.
\newblock Number~97. Cambridge university press, 2007.

\bibitem{Mubayi02}
D.~Mubayi.
\newblock Some exact results and new asymptotics for hypergraph {T}ur\'an numbers.
\newblock {\em Combinatorics, Probability and Computing}, 11(3):299--309, 2002.

\bibitem{Dhruv25}
D.~Mubayi.
\newblock Hypergraphs without complete partite subgraphs.
\newblock {\em arXiv preprint arXiv:2507.06390}, 2025.

\bibitem{nie2023sidorenko}
J.~Nie and S.~Spiro.
\newblock Sidorenko hypergraphs and random {T}ur\'an numbers.
\newblock {\em arXiv preprint arXiv:2309.12873}, 2023.

\bibitem{peeva2010graded}
I.~Peeva.
\newblock {\em Graded syzygies}, volume~14.
\newblock Springer Science \& Business Media, 2010.

\bibitem{pohoata2021norm}
C.~Pohoata and D.~Zakharov.
\newblock Norm hypergraphs.
\newblock {\em to appear in Combinatorica}, 2021.

\bibitem{sernesi2006deformations}
E.~Sernesi.
\newblock {\em Deformations of algebraic schemes}.
\newblock Springer, 2006.

\bibitem{Valla98}
G.~Valla.
\newblock {\em Problems and Results on Hilbert Functions of Graded Algebras}, pages 293--344.
\newblock Birkh{\"a}user Basel, Basel, 1998.

\bibitem{walsh2020polynomial}
M.~N. Walsh.
\newblock The polynomial method over varieties.
\newblock {\em Inventiones Mathematicae}, 222(2):469--512, 2020.

\end{thebibliography}
\appendix

\end{document}